\newcommand{\inner}[1]{\langle #1 \rangle}
\newcommand{\norm}[1]{\left\Vert #1\right\Vert}
\newcommand{\bb}[1]{\mathbb{#1}}
\newcommand{\ca}[1]{\mathcal{#1}}
\newcommand{\xk}{{x_{k} }}
\newcommand{\yk}{{y_{k} }}
\newcommand{\xkp}{{x_{k+1} }}
\newcommand{\xkm}{{x_{k-1} }}
\newcommand{\ykp}{{y_{k+1} }}
\newcommand{\ykm}{{y_{k-1} }}
\newcommand{\zk}{{z_{k} }}
\newcommand{\zkh}{{z_{k+1/2 }}}
\newcommand{\zkp}{{z_{k+1} }}
\newcommand{\exk}{{\eta_{x,k} }}
\newcommand{\eyk}{{\eta_{y,k} }}
\newcommand{\dxk}{{d_{x,k} }}
\newcommand{\dyk}{{d_{y,k} }}
\newcommand{\Rn}{\mathbb{R}^n}
\newcommand{\Rm}{\mathbb{R}^m}
\newtheorem{theo}{Theorem}[section]
\newtheorem{lem}[theo]{Lemma}
\newtheorem{prop}[theo]{Proposition}
\newtheorem{defin}[theo]{Definition}
\newtheorem{assumpt}[theo]{Assumption}
\theoremstyle{definition}
\newtheorem{rmk}[theo]{Remark}
\newcommand{\Loj}{{{\L}ojasiewicz}\;}
\newcommand{\ys}{\mathcal{Y}^*}
\newcommand{\crit}{\mathrm{crit}}
\newcommand{\cB}{\ca{B}}
\newcommand{\dfx}{\nabla_x f}
\newcommand{\dfy}{\nabla_y f}
\newcommand{\ddfxx}{\nabla^2_{xx} f}
\newcommand{\ddfyy}{\nabla^2_{yy} f}
\newcommand{\ddfxy}{\nabla^2_{xy} f}
\newcommand{\ddfyx}{\nabla^2_{yx} f}
\newcommand{\dhx}{\nabla_x h_\beta}
\newcommand{\dhy}{\nabla_y h_\beta}
\newcommand{\dhyk}{\nabla_y h_{\beta_k}}
\newcommand{\ddhxx}{\nabla^2_{xx} h_\beta}
\newcommand{\ddhyy}{\nabla^2_{yy} h_\beta}
\newcommand{\ddhxy}{\nabla^2_{xy} h_\beta}
\newcommand{\ddhyx}{\nabla^2_{yx} h_\beta}
\newcommand{\qaq}{\quad\text{and}\quad}
\newcommand{\hb}{h_\beta}
\newcommand{\hbk}{h_{\beta_k}}
\newcommand{\sLh}{\mathsf{L}_h}
\newcommand{\eyb}{\bar \eta_y}
\newcommand{\exb}{\bar \eta_x}
\newcommand{\Hk}{H_k}
\newcommand{\Hkp}{H_{k+1}}
\newcommand{\tk}{\tau_k}
\newcommand{\df}{\nabla f}
\newcommand{\dhb}{\nabla h_{\beta}}
\newcommand{\sL}{\mathsf{L}_f}
\newcommand{\sLD}{\mathsf{L}_{f,\mathcal{D}}}
\newcommand{\eykbb}{\eta_{y,k}^{\text{BB1}}}
\newcommand{\exkbb}{\eta_{x,k}^{\text{BB1}}}
\newcommand{\eykbbt}{\eta_{y,k}^{\text{BB2}}}
\newcommand{\exkbbt}{\eta_{x,k}^{\text{BB2}}}
\newcommand{\bk}{\beta_k}
\newcommand{\bkp}{\beta_{k+1}}
\newcommand{\zs}{x, \ys(x)}
\numberwithin{equation}{section}
\title{Line-search and Adaptive Step Sizes for Nonconvex-strongly-concave Minimax Optimization}
\author{Bohao Ma\thanks{School of Data Science, The Chinese University of Hong Kong (Shenzhen), Shenzhen, Guangdong, China 
(\url{bohaoma@link.cuhk.edu.cn}  and \url{xncxy@cuhk.edu.cn}).}, ~
Nachuan Xiao\protect\footnotemark[1], ~
Junyu Zhang\thanks{Department of Industrial Systems Engineering and Management, National University of Singapore, Singapore (\url{junyuz@nus.edu.sg}).}}
\begin{document}
\maketitle
	
\begin{abstract}
    In this paper, we propose a novel reformulation of the smooth nonconvex–strongly-concave (NC–SC) minimax problems that casts the problem as a joint minimization. We show that our reformulation preserves not only first-order stationarity, but also global and local optimality, second-order stationarity, and the Kurdyka-{\L}ojasiewicz (KL) property, of the original NC-SC problem, which is substantially stronger than its nonsmooth counterpart \cite{hu2024minimization} in the literature. With these enhanced structures, we design a versatile parameter-free and nonmonotone line-search framework that does not require evaluating the inner maximization. Under mild conditions, global convergence rates can be obtained, and, with KL property, full sequence convergence with asymptotic rates is also established. In particular, we show our framework is compatible with the gradient descent-ascent (GDA) algorithm. By equipping GDA with Barzilai–Borwein (BB) step sizes and nonmonotone line-search, our method exhibits superior numerical performance against the compared benchmarks.
\end{abstract}

\section{Introduction} \label{sec:intro}
In this paper, we consider the nonconvex-strongly-concave (NC-SC) minimax problem,
\begin{equation}
    \tag{MM}
    \label{Prob_Ori}
    \begin{aligned}
        \min_{x \in \Rn} \max_{y \in \Rm} f(x,y), 
    \end{aligned}
\end{equation}
where $f: \Rn \times \Rm \to \bb{R}$ is locally Lipschitz smooth, possibly nonconvex in $x$, and strongly concave in $y$. The MiniMax  problem \eqref{Prob_Ori} encompasses a wide range of machine learning and statistical applications including generative models \cite{goodfellow2020generative}, adversarial learning \cite{sinha2017certifying}, distributionally robust regression \cite{shafieezadeh2015distributionally}, etc. Throughout this paper, let $\Phi(x):= \max_{y \in \Rm} f(x,y)$ denote the value or primal function for \eqref{Prob_Ori}. We make the following basic assumptions on \eqref{Prob_Ori}.
	\begin{assumpt}%{\bf Blanket assumptions}
		\label{Assumption_1}
		\begin{enumerate} 
			\item The function $f$ is thrice continuously differentiable.
			\item For any $x\in\Rn$, the function $y \mapsto f(x,y)$ is $\mu$-strongly concave, where $\mu > 0$ is independent of $x$.
            \item The value function $\Phi(x)$ is bounded below. That is, $\bar \Phi := \inf_{x\in\Rn} \Phi(x) > -\infty.$
		\end{enumerate}
\end{assumpt}
It is worth mentioning that the minimax problem \eqref{Prob_Ori} is equivalent to the following minimization problem \cite{xu2023unified},
\begin{equation} \label{Prob_value_fcn}
    \min_{x \in \Rn} \Phi(x).
\end{equation}
Additionally, let $\ys(x) := \mathop{\arg\max}_{y \in \Rm} f(x,y)$ denote the maximizer for the inner maximization. The strong concavity of $f(x, \cdot)$ ensures that $\ys$ is singleton-valued, and thus can be regarded as a mapping from $\Rn$ to $\Rm$ in the rest of this paper. 

Based on the reformulation \eqref{Prob_value_fcn}, a wide range of existing first-order algorithms \cite{lin20a,zhang2021complexity,li2021complexity,yang2022nest,bolte2023backtrack,wang2024efficient} for solving the NC–SC minimax problem \eqref{Prob_Ori} adopt a multiple-loop framework. In each outer iteration, these algorithms attempt to minimize $\Phi(x)$ with some first-order method, while Danskin's theorem is used to obtain $\nabla \Phi(x) \approx \nabla_x f\left(x,y_+\right)$ and $y_+ \approx \ys(x)$ is computed by an inner loop that approximately maximizes $f(x,\cdot)$. Combined with the (inexact) proximal point algorithm (PPA) \cite{rockafellar1976monotone}, in each strongly-convex-strongly-concave subproblem, \cite{lin20a,zhang2021complexity,wang2024efficient} employ the  Nesterov acceleration \cite{nesterov1983method} for the inner maximization sub-subproblems, demonstrating (near-)optimal complexities that (almost) match the information theoretic lower bounds \cite{lin20a,zhang2021complexity,li2021complexity,wang2024efficient}. Yet, in practice, these triple-loop frameworks are difficult in tuning parameters, balancing the computational costs among multiple loops, as well as keeping the stability of the algorithm. Despite the theoretical (near-)optimality, such a drawback significantly limits the practical efficiency of these algorithms. 

Compared to the multiple-loop PPA type algorithms, a more practically implementable class of methods are the {\it single-loop} descent–ascent methods \cite{lin2020gradient,mahdavinia2022tight,li2023tiada,huang23a,xu2023unified,xu2024stochasticgdamethodbacktracking,zhang2024agdaproximalalternatinggradient,lin2025two,li2023tiada,huang23a}. In each iteration, these methods  update the $x$-variable by taking an approximated gradient descent step to $\Phi(x)$, while the $y$-variable is updated to track $\ys(x)$ by taking a single ascent step or other specifically designed schemes for the inner maximization. %subproblem.  
For example, \cite{lin2020gradient,lin2025two} propose a gradient descent-ascent (GDA) method, by setting the ratio between the $y$- and $x$-step sizes to be sufficiently large, yet still finite, and provable convergence to $\varepsilon$-stationary points of $\Phi(x)$ can be established. Besides GDA,  optimistic gradient descent-ascent (OGDA) method and the extra-gradient (EG) method have also be studied under the NC-SC setting \cite{mahdavinia2022tight}.

However, a significant limitation of the existing works is their heavy reliance on prior knowledge of the problem-specific global parameters (e.g., $\mu$ and $\sL$), which in turn allow them to carefully design the step sizes, making it difficult to tune parameters in practice. For minimization problems, there are mature line-search tools \cite{Bertsekas2016NonlinearProgramming} to resolve this issue. In addition to classic monotone line-search results, highly efficient adaptive step sizes such as Barzilai–Borwein (BB) \cite{raydan1997barzilai} are also provably  convergent under the nonmonotone line-search frameworks \cite{zhang2004nonmonotone}. Moreover, under the Kurdyka-\Loj (KL) property \cite{lojasiewicz1965ensembles,kur98}, line-search methods enjoy strong full sequence convergence and fast local rates, see, e.g., \cite{qian2023convergence,qian2025convergence}.
Yet, unfortunately, for problem \eqref{Prob_Ori}, the nested minimax structure precludes the direct use of $f$ as the potential function in line-search. To our best knowledge, the only existing line-search algorithm for \eqref{Prob_Ori} is \cite{bolte2023backtrack}, which applies the Armijo rule to the value function $\Phi$. In this method, each line-search iteration involves repeated evaluation of $\Phi$, which in turn requires repeatedly solving the inner maximization problem to sufficiently high accuracy, making the algorithm computationally inefficient for practical problems. Therefore, we are driven to the following question:
\begin{quote}
    Can we develop an efficient and easy-to-implement line-search framework for the %minimax 
    problem \eqref{Prob_Ori} with guaranteed global convergence? 
\end{quote}
In this paper, we provide an affirmative answer to this question by considering the following smooth Regularized Minimization reformulation \eqref{Prob_Min} for problem \eqref{Prob_Ori}: 
\begin{equation}
\tag{RM}\label{Prob_Min}
\min_{x\in\Rn, y\in\Rm} \quad \hb(x,y)
:= f(x,y) + \frac{\beta}{2} \norm{\nabla_y f(x,y)}^2.
\end{equation}
Our reformulation \eqref{Prob_Min} is partly inspired by the Partial Forward-Backward Envelope (PFBE) reformulation in \cite{hu2024minimization}, yet \eqref{Prob_Min} preserves much stronger structural information of \eqref{Prob_Ori} than the general PFBE. In particular, under a mild condition that regularization parameter $\beta>\mu^{-1}$, we prove:
\begin{enumerate}
    \item Problems \eqref{Prob_Min} and \eqref{Prob_Ori} share the same set of first- and second-order stationary points.
    \item Problems \eqref{Prob_Min} and \eqref{Prob_Ori} share the same set global and local optimal points. 
    \item If $\Phi$ satisfies the KL property, then $h_\beta$ also preserves the KL property with the same KL exponent. 
\end{enumerate}
Most importantly, unlike the value function $\Phi$, the evaluation of $\hb$ does not rely on any inner maximization subroutines. Therefore, $\hb$ is directly applicable to the line-search techniques as a natural metric to measure the quality of a trial point, which is the first major contribution of this work. 

As for our second contribution, we note that the gradient $\dhb$ involves the (partial) Hessian computation of $\nabla_{yy}^2 f$. Therefore a naive implementation of GD with line-search for \eqref{Prob_Min} will lead to  potential computational overheads, and the resulting method significantly diverges from the widely accepted GDA type schemes. To resolve this issue, we propose a general nonmonotone line-search framework (Algorithm \ref{alg:line-search-framework}) for the NC–SC minimax problem \eqref{Prob_Ori}. Under mild conditions, we prove the global convergence and an $O(\varepsilon^{-2})$-iteration complexity for our method. Given the KL property of \eqref{Prob_Ori} (defined for $\Phi$), full sequence convergence and a fast local rate can be further established.

Moreover, when prior knowledge of the value of $\mu$ is not accessible, we also propose a fully parameter-free GDA method with line-search (Algorithm \ref{alg:parameter-free-GDA}), and we show that it still shares the same convergence guarantees as Algorithm \ref{alg:line-search-framework}.  

Finally, motivated by the %Barzilai-Borwein 
BB step size for minimization problems, we propose a BB step size rule for NC-SC minimax optimization problem \eqref{Prob_Ori}, with provable global convergence. Preliminary numerical experiments on robust nonlinear regression problems demonstrates that when equipped with the proposed BB stepsize rule, both Algorithm \ref{alg:line-search-framework} and Algorithm \ref{alg:parameter-free-GDA} demonstrate significant superiority over the two-timescale GDA method \cite{lin2020gradient,lin2025two} and the L-BFGS method directly applied to the minimization reformulation \eqref{Prob_Min}. 
\vspace{0.2cm}

\noindent\textbf{Additional related works on KL. }  In this paragraph, we supplement the recent works that leverage the KL or global Polyak–Łojasiewicz (PL) properties on \eqref{Prob_Ori} to analyze first-order methods  \cite{nouiehed2019solving,zheng2022doubly,zheng2023universal,zheng2025doubly,li2025nonsmooth,lu2025first,lu2025first2,cohen2025alternating}. These results typically assume a one-sided KL or PL condition, imposed either on the minimization part, the maximization part, or both, and use it as a regularity condition to establish complexity results. Furthermore, \cite{cohen2025alternating} recently establishes KL-based iterate convergence of (proximal) GDA methods with constant step sizes under the NC-SC setting, by constructing a Lyapunov function that couples the value function and the maximizer mapping. In contrast, our reformulation \eqref{Prob_Min} provides a much cleaner single-objective Lyapunov function, allowing us to derive convergence of the full iteration sequence and fast local rates with more natural analysis.\vspace{0.2cm}

\noindent\textbf{Organization. } 
The rest of this paper is organized as follows. Section \ref{sec:preliminaries} introduces the definitions and preliminary results required throughout the paper. In section \ref{sec:linesearch-framework}, we establish the equivalence between the minimax problem \eqref{Prob_Ori} and its minimization reformulation \eqref{Prob_Min} with respect to global and local solutions, first-order minimax (stationary) points, second-order minimax (stationary) points, and the KL property. We then present our line-search framework (Algorithm \ref{alg:line-search-framework}), and establish its convergence guarantees. Section \ref{sec:applications} illustrates the application of this framework by developing a parameter-free GDA method equipped with line-search and adaptive BB-type step sizes (Algorithm \ref{alg:parameter-free-GDA}). Section \ref{sec:numerical experiments} reports numerical experiments demonstrating the empirical efficiency of the proposed approach. We conclude in the final section. \vspace{0.2cm}

\noindent\textbf{Notations. }  
For $v,w \in \bb{R}^d$, we denote the Euclidean inner product as $\inner{v, w} := \sum_{i=1}^d v_i w_i$, the Euclidean norm as $\|v\| := \sqrt {\inner{v, v}}$, and the ball $\cB_\delta(v) := \{w \in \bb{R}^d: \|w-v\| \leq \delta\}$. For a function $F:\bb{R}^d \to \bb{R}$ and a real number $\alpha \in \bb{R}$, we denote the sublevel set of $F$ by $[F(v) \leq \alpha] := \{v \in \bb{R}^d: F(v) \leq \alpha\}$. The $d$-dimensional identity matrix is denoted as $I_d \in \bb{R}^{d\times d}$. The transposed Jacobian of a mapping $F: \bb{R}^{d_1} \to \bb{R}^{d_2}$ is denoted as $\nabla F(v) \in \bb{R}^{d_1 \times d_2}$. That is, let $F_i : \bb{R}^{d_1} \to \bb{R}$ denote the $i$-th coordinate of the mapping $F$ and then $\nabla F$ is defined as $\nabla F(v):= \left[\nabla F_1(v), ..., \nabla F_{d_2}(v) \right],$
where each $\nabla F_i(v)$ is a column vector in $\bb{R}^{d_1}$. Moreover, for $w \in \bb{R}^{d_2}$, $\nabla^2 F(x): y \mapsto \nabla^2 F(v)[w]$ denotes the second-order (directional) derivative of the mapping $F$, which can be regarded as a linear mapping from $\bb{R}^{d_2}$ to $\bb{R}^{d_1 \times d_1}$ satisfying
$\nabla^2 F(v)[w] = \sum_{i=1}^{d_2} w_i \nabla^2 F_i(v).$
For $f: \Rn \times \Rm \to \bb{R}$, we define its partial gradient, Hessian, and third (directional) derivative as $\nabla_x f(x,y) := \nabla g_{y}(x) \in \Rn$, $\nabla_{yx}^2 f(x,y) := \nabla g'_{x}(y) \in \bb{R}^{m \times n}$, and $\nabla_{xyx}^3 f(x,y)[z] := \sum_{i=1}^n z_i \nabla^2_{xy} g_{x,i}'(x,y) \in \bb{R}^{n \times m}$, respectively, with $g_{y}(x) = f(x, y)$, $g'_{x}(y) = \nabla_x f(x,y)$, and $g_{x,i}' = (\nabla_x f)_i$.
All the other partial gradients, Hessians, and third derivatives are defined in a similar manner. Given a bounded set $\mathcal{D}$, we will denote the Lipschitz modulus of $\nabla f$ over $\mathcal{D}$ as $\sLD$ throughout. Finally, for simplicity, we denote $\zk := (\xk, \yk)$,  $\zkh := (\xk, \ykp)$, and $\zkp := (\xkp, \ykp)$ in the proof of theoretical results in Section \ref{sec:linesearch-framework} and \ref{sec:applications}. 

\section{Preliminaries} \label{sec:preliminaries}

\subsection{Optimality Conditions and the Kurdyka-{{\L}ojasiewicz} (KL) Property}
\label{subsec:opt_cond_KL}

In this section, we present the preliminary concepts about the optimality conditions for the minimax problem \eqref{Prob_Ori} and the minimization problem \eqref{Prob_Min}, as well as the concept of the KL %Kurdyka-\Loj 
property. We begin with the definitions on the global and local optimal (minimax) points of problem \eqref{Prob_Ori}. %and \eqref{Prob_Min}.

\begin{defin}
    \label{def:global-local-minimax-point-ori}
    We say that $(x^*, y^*)$ is a global minimax point of \eqref{Prob_Ori} if
    \begin{equation*}
        %\label{eq:def-global-minimax-ori}
        x^* \in \mathop{\arg \min}_{x \in \Rn} \Phi(x) \quad \text{and} \quad 
        y^* \in \ys(x^*).
    \end{equation*}
    And we say that $(x^*, y^*)$ is a local minimax point of \eqref{Prob_Ori} if there exists $\delta > 0$ s.t.
    \begin{equation*}
        %\label{eq:def-local-minimax-ori}
        x^* \in \mathop{\arg\min}_{x \in \Rn \cap \cB_\delta(x^*)} \Phi(x) \quad \text{and} \quad 
        y^* \in \ys(x^*).
    \end{equation*} 
\end{defin}

For global convergence analysis, we also introduce the definitions on the first-order minimax point of \eqref{Prob_Ori}, which is defined as the first-order stationary point of the (possibly nonconvex) value function $\Phi$.

\begin{defin}
    \label{def:opt-cond-ori}
    We say that $(x^*, y^*)$ is a first-order minimax point of \eqref{Prob_Ori} if $\nabla \Phi(x^*) = 0$ and $y^* \in \ys(x^*)$. Or equivalently, $\dfx(x^*, y^*) = 0$ and $\dfy(x^*, y^*) = 0.$     Furthermore, we say that $(x^*, y^*)$ is an $\varepsilon$-first-order minimax point of \eqref{Prob_Ori} if $\|\dfx(x^*, y^*)\| \leq \varepsilon$ and $\|\dfy(x^*, y^*)\| \leq \varepsilon.$
\end{defin}

Similar to the first-order optimality conditions, the second-order optimality conditions for \eqref{Prob_Ori} can also be defined %similarly 
through that of the value function $\Phi$. 

\begin{defin}
    \label{second-order-opt-cond-ori}
    Let $(x^*, y^*)$ be a first-order minimax point of \eqref{Prob_Ori}. If additionally $\nabla^2\Phi(x^*) \succeq 0$, then we say that $(x^*, y^*)$ satisfies the second-order necessary condition (SONC) of \eqref{Prob_Ori}. If $\nabla^2\Phi(x^*) \succ 0$, then we say that $(x^*, y^*)$ satisfies the second-order sufficient condition (SOSC).
\end{defin}

For the regularized minimization problem \eqref{Prob_Min}, the local and global optimal points, as well as the first- and second-order stationary points, follow the conventions of nonconvex optimization for the joint minimization over $(x,y)$, which are stated below for presentation completeness.  
\begin{defin}
    \label{def:global-local-optimal-min}
    We say that $(x^*, y^*)$ is a global minimizer of \eqref{Prob_Min} if
    \begin{equation*}
        %\label{eq:def-global-minimax-ori}
        (x^*,y^*) \in \mathop{\arg \min}_{x \in \Rn, y\in\Rm} \hb(x,y).
    \end{equation*}
    And we say that $(x^*, y^*)$ is a local minimizer of \eqref{Prob_Min} if there exists $\delta > 0$ such that
    \begin{equation*}
        %\label{eq:def-local-minimax-ori}
        (x^*,y^*) \in \mathop{\arg \min}_{ (x,y)\in \cB_\delta(x^*,y^*)} \hb(x,y).
    \end{equation*}
\end{defin}

\begin{defin}
    \label{def:opt-cond-min}
    A point $(x^*, y^*)$ is called a first-order stationary point of the problem \eqref{Prob_Min} if $\dhx(x^*, y^*) = 0$ and $\dhy(x^*, y^*) = 0.$
    It is called an $\varepsilon$-first-order stationary point of \eqref{Prob_Min} if $\|\dhx(x^*, y^*)\| \leq \varepsilon$ and $\|\dhy(x^*, y^*)\| \leq \varepsilon.$
\end{defin}

\begin{defin}
    \label{second-order-opt-cond-min}
    Let $(x^*, y^*)$ be a first-order stationary point of \eqref{Prob_Min}. If additionally $\nabla^2 \hb (x^*, y^*) \succeq 0$, then we say that $(x^*, y^*)$ satisfies the SONC of \eqref{Prob_Min}.
    If $\nabla^2 \hb (x^*, y^*) \succ 0$,  then we say that $(x^*, y^*)$ satisfies the SOSC of \eqref{Prob_Min}.
\end{defin}

To establish iterate convergence and local convergence rates for our line-search framework, we formally introduce the KL property \cite{lojasiewicz1965ensembles,kur98}, a mild assumption on the local geometry of the objective function. It holds for all (smooth) subanalytic and semialgebraic functions and plays a key role in establishing iterate convergence and convergence rates of first-order methods, especially under the nonconvex setting \cite{lojasiewicz1965ensembles,kur98,absil2005convergence,AttBol09,AttBolRedSou10,AttBolSva13,BolSabTeb14}. Furthermore, a broad class of practical problems satisfy this property, see, e.g., \cite[Section 4]{AttBolRedSou10} and \cite[Section 5]{BolSabTeb14}.

\begin{defin}[KL property]
    \label{def:Loj-inequality}
    A differentiable function $F: \bb{R}^d \to \bb{R}$ is said to satisfy the KL property at a point $\bar x$ if there exists a neighborhood $U(\bar x)$ of $\bar x$ such that
    \[\hspace{-2ex}\|\nabla{F}(x)\|\geq C_F|F(x)-F(\bar x)|^\theta, \quad \forall~x \in U(\bar x),\]
    where $C_F = C_F(\bar x) >0$ and $\theta = \theta(\bar x) \in[\frac12,1)$ are constants depending on the point $\bar{x}$. In particular,  $\theta$ is called the %{\L}ojasiewicz 
    KL exponent of $F$ at $\bar x$.
\end{defin}

It is worth noting that the KL property holds for more general desingularization function (see, e.g., \cite{AttBol09,AttBolSva13}) and Definition \ref{def:Loj-inequality} is sometimes called the {\L}ojasiewicz property in the literature. We will stick to Definition \ref{def:Loj-inequality} as it allows an explicit asymptotic convergence rates. 

\subsection{Preliminary Lemmas}
\label{subsec:preliminary-lemmas}
As a preparation for later analysis, we will present a few key preliminary lemmas in this subsection. We begin with a lemma on the smoothness of on the functions $\ys$ and $\Phi$, which directly follows from \cite[Theorem 10.58]{rockafellar1998variational}. Therefore, we omit the proof for Lemma \ref{lem:properties-of-value-fcn}. 
\begin{lem}
    \label{lem:properties-of-value-fcn}
    Given Assumption \ref{Assumption_1}, the optimal-value mapping $\ys$ is continuously differentiable with
    \[
    \nabla \ys(x) = -\ddfxy(\zs)[\ddfyy(\zs)]^{-1}.
    \]
    The value function $\Phi$ is twice continuously differentiable with $\nabla \Phi(x) = \nabla_x f(\zs)$ and
    \begin{equation}\label{eq:grad-hessian-of-Phi}  \nabla^2\Phi(x) ={} \ddfxx(\zs) - \ddfxy(\zs)[\ddfyy(\zs)]^{-1}\ddfyx(\zs).
    \end{equation}
\end{lem}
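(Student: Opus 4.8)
The plan is to obtain both displayed formulas by applying the implicit function theorem to the first-order optimality condition of the inner maximization, and then differentiating the resulting identities. By Assumption \ref{Assumption_1}, for each fixed $x$ the function $y \mapsto f(x,y)$ is $\mu$-strongly concave, hence coercive, so it attains its maximum at the unique point $\ys(x)$, which is characterized by the stationarity equation $\dfy(x,\ys(x)) = 0$. First I would apply the implicit function theorem to $G(x,y) := \dfy(x,y)$: since $f$ is thrice continuously differentiable, $G$ is of class $C^2$, and its partial transposed-Jacobian in $y$ is $\ddfyy(x,y)$, which by strong concavity satisfies $\ddfyy(x,y) \preceq -\mu I_m$ and is therefore invertible, uniformly over all $(x,y)$. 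The theorem then yields, around any base point, a locally unique $C^2$ solution of $G(x,y)=0$; by uniqueness of the strongly concave maximizer this local solution must coincide with $\ys$, and hence $\ys$ is (globally) of class $C^1$ on $\Rn$.

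Next I would differentiate the identity $\dfy(x,\ys(x)) \equiv 0$ with respect to $x$. Using the chain rule with our transposed-Jacobian convention (so that $\nabla \ys(x) \in \bb{R}^{n\times m}$ and $\ddfyy(\zs) \in \bb{R}^{m\times m}$ are conformable), this gives $\ddfxy(\zs) + \nabla\ys(x)\,\ddfyy(\zs) = 0$; right-multiplying by $[\ddfyy(\zs)]^{-1}$, which exists by the above, yields the stated formula $\nabla\ys(x) = -\ddfxy(\zs)[\ddfyy(\zs)]^{-1}$. Continuity of $\nabla\ys$ is then immediate, since it is a product of the continuous maps $x\mapsto\ddfxy(\zs)$ and $x\mapsto\ddfyy(\zs)$ (continuous because $f\in C^2$ and $\ys$ is continuous) with matrix inversion, which is continuous on the open set of invertible matrices.

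For the value function, I would write $\Phi(x) = f(x,\ys(x))$ and differentiate once by the chain rule to get $\nabla\Phi(x) = \dfx(\zs) + \nabla\ys(x)\,\dfy(\zs)$; the second term vanishes because $\dfy(\zs) = 0$, giving $\nabla\Phi(x) = \dfx(\zs)$. Since $\dfx$ is $C^1$ and $\ys$ is $C^1$, the map $x\mapsto\dfx(\zs)$ is $C^1$, so $\Phi$ is twice continuously differentiable; differentiating $\nabla\Phi(x) = \dfx(\zs)$ once more in $x$ gives $\nabla^2\Phi(x) = \ddfxx(\zs) + \nabla\ys(x)\,\ddfyx(\zs)$, and substituting the formula for $\nabla\ys(x)$ produces \eqref{eq:grad-hessian-of-Phi}. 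Continuity of $\nabla^2\Phi$ follows exactly as for $\nabla\ys$.

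There is no serious obstacle here; the only points that require care are the bookkeeping of the transposed-Jacobian convention in the chain rule (so that all matrix products are conformable) and the two distinct uses of $\mu$-strong concavity — once to guarantee the uniform invertibility of $\ddfyy$ needed for the implicit function theorem, and once to identify the local implicit-function branch with the global maximizer map $\ys$.
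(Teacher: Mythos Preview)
Your argument is correct and complete: the implicit function theorem applied to $\dfy(x,y)=0$, followed by the two chain-rule differentiations you describe, is exactly the standard route to these formulas, and you have handled the transposed-Jacobian bookkeeping and the role of strong concavity correctly. The paper itself gives no proof at all---it simply cites \cite[Theorem~10.58]{rockafellar1998variational} and omits the argument---so your self-contained derivation is precisely what that citation unpacks to in the present smooth setting.
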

\noindent Next, we present the smoothness and lower boundedness of the regularized objective function $\hb$.
\begin{lem}
    \label{lem:properties-of-h}
    Suppose Assumption \ref{Assumption_1} holds and $\beta > 0$. Then $\hb$ is twice continuously differentiable with
        \begin{align*}
            \dhx(x, y) &= \dfx(x, y) + \beta \ddfxy(x, y)\dfy(x, y),\\
            \dhy(x, y) &=[I_m+\beta\ddfyy(x, y)] \dfy(x, y).
        \end{align*}
        If in addition that the regularization parameter $\beta$ is large enough s.t. $\beta > \mu^{-1}$,  then %we have 
        \[\inf_{x\in \Rn, y \in \Rm} h_{\beta}(x, y) = \inf_{x \in \Rn} \Phi(x) =: \bar \Phi > -\infty.\] 
\end{lem}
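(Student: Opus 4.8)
The first part, computing $\dhx$ and $\dhy$, is a routine differentiation of $\hb(x,y) = f(x,y) + \tfrac{\beta}{2}\norm{\nabla_y f(x,y)}^2$. I would simply apply the chain rule: differentiating $\tfrac{\beta}{2}\norm{\dfy(x,y)}^2$ in $x$ produces $\beta\,\ddfxy(x,y)\dfy(x,y)$ (using the convention that $\ddfxy$ is the transposed Jacobian of $\dfy$ with respect to $x$), and differentiating in $y$ produces $\beta\,\ddfyy(x,y)\dfy(x,y)$, after which one adds $\dfy(x,y)$ from the $f$ term and factors out to get $[I_m + \beta\,\ddfyy(x,y)]\dfy(x,y)$. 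Twice continuous differentiability of $\hb$ follows because $f$ is thrice continuously differentiable by Assumption \ref{Assumption_1}(1), so each term in the gradient formulas is $C^1$.

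The substantive part is the identity $\inf_{x,y} \hb(x,y) = \inf_x \Phi(x) = \bar\Phi$. The plan is to prove two inequalities. For $\inf_{x,y}\hb \le \bar\Phi$: at any $x$ take $y = \ys(x)$, so that $\dfy(\zs) = 0$ by optimality in the inner maximization; then $\hb(\zs) = f(\zs) + 0 = \Phi(x)$. Taking the infimum over $x$ gives $\inf_{x,y}\hb \le \inf_x \Phi(x) = \bar\Phi$. For the reverse inequality $\hb(x,y) \ge \bar\Phi$ for all $(x,y)$, fix $x$ and use strong concavity of $f(x,\cdot)$. Strong concavity gives a quadratic upper bound: for any $y$,
\[
\Phi(x) = f(x,\ys(x)) \le f(x,y) + \inner{\dfy(x,y), \ys(x) - y} - \frac{\mu}{2}\norm{\ys(x) - y}^2 \le f(x,y) + \frac{1}{2\mu}\norm{\dfy(x,y)}^2,
\]
where the last step maximizes the right-hand side over the displacement $\ys(x)-y$ (equivalently, Young's inequality). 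Hence
\[
\hb(x,y) = f(x,y) + \frac{\beta}{2}\norm{\dfy(x,y)}^2 \ge \Phi(x) - \frac{1}{2\mu}\norm{\dfy(x,y)}^2 + \frac{\beta}{2}\norm{\dfy(x,y)}^2 = \Phi(x) + \frac{\beta\mu - 1}{2\mu}\norm{\dfy(x,y)}^2.
\]
Since $\beta > \mu^{-1}$, the coefficient $\beta\mu - 1 > 0$, so $\hb(x,y) \ge \Phi(x) \ge \bar\Phi$. Taking the infimum over all $(x,y)$ completes the argument, and combined with the first inequality yields equality.

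The main obstacle — though it is minor — is making the strong-concavity estimate rigorous: one must justify that for a $\mu$-strongly concave differentiable function the bound $f(x,\ys(x)) \le f(x,y) + \tfrac{1}{2\mu}\norm{\dfy(x,y)}^2$ holds. This is the standard ``PL-type'' consequence of strong concavity; I would derive it from the inequality $f(x,y') \le f(x,y) + \inner{\dfy(x,y), y'-y} - \tfrac{\mu}{2}\norm{y'-y}^2$ applied at $y' = \ys(x)$ and then dropping to the maximum over the free direction. One should also note $\bar\Phi > -\infty$ is guaranteed by Assumption \ref{Assumption_1}(3), so the stated infima are genuinely finite, and the existence of $\ys(x)$ for every $x$ follows from strong concavity (Assumption \ref{Assumption_1}(2)) as already observed in the text.
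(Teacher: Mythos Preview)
Your proposal is correct and follows essentially the same approach as the paper's proof: both use the standard strong-concavity consequence $\Phi(x)-f(x,y)\le \tfrac{1}{2\mu}\|\dfy(x,y)\|^2$ to show $\hb(x,y)\ge \Phi(x)$ when $\beta>\mu^{-1}$, and both obtain the reverse inequality by evaluating $\hb$ at $y=\ys(x)$. The only cosmetic difference is that the paper rewrites the lower bound as $\beta\mu\,\Phi(x)+(1-\beta\mu)f(x,y)$ and then invokes $f\le\Phi$, whereas you substitute directly to get $\hb(x,y)\ge \Phi(x)+\tfrac{\beta\mu-1}{2\mu}\|\dfy(x,y)\|^2$; these are equivalent rearrangements of the same inequality.
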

\begin{proof}
    The smoothness of $\hb$ and its gradient formulas follow from direct computations and are thus omitted. For the lower boundedness, let us 
    fix an arbitrary $\beta > \mu^{-1}$. Because $f(x,\cdot)$ is $\mu$-strongly concave, it holds that
    \begin{align*}
        \hb(x,y) &= f(x,y) + \frac{\beta}{2} \|\dfy(x,y)\|^2
        \overset{(i)}{\geq} f(x,y) + \beta\mu [f(x,\ys(x)) - f(x,y)]\\
        &\overset{(ii)}{=} \beta\mu \Phi(x) + (1- \beta\mu) f(x,y) \overset{(iii)}{\geq} \Phi(x), %\geq \bar\Phi, 
        \quad \forall (x,y) \in \Rn \times \Rm,
    \end{align*}
    where (i) is by the standard strong concavity property: $f(x,\ys(x))-f(x,y) \leq \frac{1}{2\mu} \|\dfy(x,y)\|^2$;
    (ii) follows from the definition of $\Phi$; and (iii) is due to $1-\beta\mu<0$ when $\beta>\mu^{-1}$ and $\Phi(x) \geq f(x,y)$, for all $x, y$. Therefore, \[\inf_{x\in \Rn, y \in \Rn} h_{\beta}(x, y) \geq \inf_{x \in \Rn} \Phi(x).\] For the reverse direction, since $\dfy(x,\ys(x)) = 0$, it holds that $\hb(x,\ys(x)) = \Phi(x) $ for all $x \in \Rn$, which further implies
    \[
    \inf_{x \in \Rn} \Phi(x) = \inf_{x \in \Rn} \hb(x, \ys(x)) \geq \inf_{x \in \Rn, y \in \Rm} \hb(x,y).
    \]
    Combining the above inequalities completes the proof of this lemma.  
\end{proof}
\noindent It is worth mentioning that, although the gradient of the regularized objective function  $\hb$ involves the Hessian-vector product operations, it is mostly used for theoretical analysis. Our line-search framework allows us to directly use the gradient of the original objective function $f$ as line-search directions. 

Next, for any $\beta>0$, we can compute the Hessian of $\hb$ as a straightforward practice of chain-rule:  
\begin{align}
    \nabla^2 h(x,y) &= 
        \begin{pmatrix}
            \ddhxx(x, y) & \ddhxy(x, y)\\
            \ddhxy(x, y)^\top & \ddhyy(x, y)
        \end{pmatrix} \label{eq:hessian-of-h},
    \end{align}
    where each block of the above Hessian takes the form
    \begin{equation}
    \begin{aligned}
    \label{eq:formula-for-ddhyy}
            \ddhxx(x, y) &= \ddfxx(x, y) + \beta\{\ddfxy(x, y)\ddfyx(x, y)
            +\nabla^3_{xxy} f(x, y)[\dfy(x, y)]\},\\
            \ddhxy(x, y) &= \ddfxy(x, y) + \beta\{\ddfxy(x, y)\ddfyy(x, y) + \nabla^3_{xyy} f(x, y)[\dfy(x, y)]\}, \\
            \nabla^2_{yy} \hb(x, y)
            &= \ddfyy(x, y) + \beta\{[\ddfyy(x, y)]^2 + \nabla^3_{yyy} f(x, y)[\dfy(x, y)]\}. 
    \end{aligned}
    \end{equation}
\iffalse
\begin{lem}
    \label{lem:second-order-diff-h}
    Suppose Assumption \ref{Assumption_1} holds and $\beta > 0$. Then $\hb$ is twice continuously differentiable with
    \begin{align}
        \nabla^2 h(x,y) &= 
            \begin{pmatrix}
                \ddhxx(x, y) & \ddhxy(x, y)\\
                \ddhyx(x, y) & \ddhyy(x, y)
            \end{pmatrix} \label{eq:hessian-of-h},
        \end{align}
        where 
        \begin{equation}
            \label{eq:formula-for-ddhyy}
            \begin{aligned}
            \ddhxx(x, y) &= \ddfxx(x, y) + \beta\{\ddfxy(x, y)\ddfyx(x, y)
            +\nabla^3_{xxy} f(x, y)[\dfy(x, y)]\},\\
            \ddhxy(x, y) &= \ddfxy(x, y) + \beta\{\ddfxy(x, y)\ddfyy(x, y) + \nabla^3_{xyy} f(x, y)[\dfy(x, y)]\} = \ddhyx(x, y)^\top,\\
            \nabla^2_{yy} \hb(x, y)
            &= \ddfyy(x, y) + \beta\{[\ddfyy(x, y)]^2 + \nabla^3_{yyy} f(x, y)[\dfy(x, y)]\}. 
            \end{aligned}
        \end{equation}
\end{lem}
\fi
In particular, the term $\dfy(x, y)=0$ when $y = \ys(x)$, which eliminates the third-order derivatives in \eqref{eq:formula-for-ddhyy}, giving a much cleaner formula for $\nabla^2h$.  

As we emphasized in the introduction, the regularized function $\hb$ inherits a lot of structural information from the original problem. In the following lemma, we show that the coercivity of the value function $\Phi$ can be inherited by $\hb$.
\begin{lem}
    \label{lem:coercivity-of-h}
    Suppose Assumption \ref{Assumption_1} holds and the value function $\Phi$ is coercive. If $\beta > \mu^{-1}$, then the function $\hb$ is also coercive. 
    Moreover, if $\beta \in (0, \mu^{-1}]$, then the set $[\hb(x,y) \leq \alpha] \cap [\|\nabla_y f(x,y)\| \leq G]$ is bounded for all $G \geq 0$ and $\alpha \in \bb{R}$.
\end{lem}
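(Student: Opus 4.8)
The plan is to sharpen the bound $\hb \ge \Phi$ obtained in Lemma~\ref{lem:properties-of-h} into one that also forces growth in the $y$-direction. Two ingredients will do this. First, starting from fact~(i) in the proof of Lemma~\ref{lem:properties-of-h}, i.e. $\Phi(x) - f(x,y) \le \tfrac{1}{2\mu}\norm{\dfy(x,y)}^2$, I would record
\[
\hb(x,y) = f(x,y) + \tfrac{\beta}{2}\norm{\dfy(x,y)}^2 \;\ge\; \Phi(x) + \tfrac12\bigl(\beta-\tfrac1\mu\bigr)\norm{\dfy(x,y)}^2,\qquad \forall\,(x,y)\in\Rn\times\Rm.
\]
Second, since $\dfy(x,\ys(x))=0$ and $-\dfy(x,\cdot)$ is $\mu$-strongly monotone (because $f(x,\cdot)$ is $\mu$-strongly concave), Cauchy--Schwarz gives $\norm{\dfy(x,y)} \ge \mu\norm{y-\ys(x)}$ for all $x,y$. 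Recall also that $\ys$ is continuous by Lemma~\ref{lem:properties-of-value-fcn}, hence bounded on every bounded set.

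For the first claim ($\beta>\mu^{-1}$), combining the two estimates yields $\hb(x,y) \ge \Phi(x) + \tfrac{\mu(\beta\mu-1)}{2}\norm{y-\ys(x)}^2$ with strictly positive coefficient. I would then argue by contradiction: if $\hb$ were not coercive, pick $(x_k,y_k)$ with $\norm{(x_k,y_k)}\to\infty$ and $\sup_k \hb(x_k,y_k) =: M < \infty$; the inequality $\hb\ge\Phi$ and coercivity of $\Phi$ force $\{x_k\}$ to be bounded, so $\norm{y_k}\to\infty$; since $\ys$ is bounded on the closure of $\{x_k\}$, we get $\norm{y_k-\ys(x_k)}\to\infty$, and then the refined inequality together with $\Phi\ge\bar\Phi$ gives $\hb(x_k,y_k)\to\infty$, contradicting $\hb(x_k,y_k)\le M$.

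For the second claim ($\beta\in(0,\mu^{-1}]$), on the set $[\norm{\dfy(x,y)}\le G]$ fact~(i) gives $\hb(x,y)\ge f(x,y)\ge \Phi(x)-\tfrac{G^2}{2\mu}$; hence on $[\hb(x,y)\le\alpha]\cap[\norm{\dfy(x,y)}\le G]$ one has $\Phi(x)\le\alpha+\tfrac{G^2}{2\mu}$, so the $x$-component of this set is bounded by coercivity of $\Phi$. For the $y$-component, strong monotonicity gives $\norm{y-\ys(x)}\le G/\mu$, and $\ys$ is bounded over the (bounded) admissible set of $x$; thus $\norm{y}$ is bounded as well, and the whole set is bounded.

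All steps are elementary once the two displayed estimates are in hand; the only real subtlety is that the plain bound $\hb\ge\Phi$ from Lemma~\ref{lem:properties-of-h} is insufficient for coercivity — it produces no growth as $\norm{y}\to\infty$ with $x$ fixed — so the argument genuinely relies on the strong-monotonicity estimate $\norm{\dfy(x,y)}\ge\mu\norm{y-\ys(x)}$ and the continuity of $\ys$ to handle the $y$-direction.
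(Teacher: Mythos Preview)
Your proposal is correct and follows essentially the same approach as the paper: both arguments hinge on the refined estimate $\hb(x,y)\ge\Phi(x)+\tfrac12(\beta-\mu^{-1})\norm{\dfy(x,y)}^2$ together with the strong-monotonicity bound $\norm{\dfy(x,y)}\ge\mu\norm{y-\ys(x)}$ and the continuity of $\ys$. The only cosmetic differences are that the paper argues directly via level-set containments rather than by contradiction with sequences, and in the second claim it keeps the slightly sharper constant $\alpha+\tfrac{G^2(1-\beta\mu)}{2\mu}$ in place of your $\alpha+\tfrac{G^2}{2\mu}$; neither affects the substance.
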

\begin{proof}
    Since $f(x, \cdot)$ is $\mu$-strongly concave, it holds that
    \begin{equation}
    \label{eq:h-coer-inter1}
        \hb(x,y) - \Phi(x) =  f(x,y) - f(x, \ys(x)) + \frac{\beta}{2}\|\dfy(x,y)\|^2  \geq \Big(\frac\beta 2 - \frac{1}{2\mu}\Big) \|\dfy(x,y)\|^2.
    \end{equation}
    Fix any $\beta > \mu^{-1}$ and $\alpha \in \bb{R}$. The level set satisfies that 
    \begin{align*}
         [\hb(x,y) \leq \alpha] &\subseteq \left[\Phi(x) + \Big(\frac\beta 2 - \frac{1}{2\mu}\Big) \|\dfy(x,y)\|^2\leq \alpha\right]\\
         &\subseteq [\Phi(x) \leq \alpha] \cap \left[\|\dfy(x,y)\|^2 \leq \frac{2(\alpha - \bar \Phi)}{\beta - \mu^{-1}}\right]\\
         &\subseteq [\Phi(x) \leq \alpha] \cap \left[\|y - \ys(x)\|^2 \leq \frac{2(\alpha - \bar \Phi)}{\mu(\beta\mu - 1)}\right],
    \end{align*}
    where the last inclusion follows from $\|\nabla_y f(x,y)\| \geq \mu \|y-\ys(x)\|$. Since $\Phi$ is coercive and $\ys$ is continuous by Lemma \ref{lem:properties-of-value-fcn}, the last intersection yields a bounded set and thus $\hb$ is coercive. 
    
    Now, for any $0<\beta \leq \mu^{-1}$, $\alpha \in \bb{R}$, and $G \geq 0$, observe that \eqref{eq:h-coer-inter1} implies 
    \begin{align*}
         &[\hb(x,y) \leq \alpha] \cap [\|\nabla_y f(x,y)\| \leq G] \subseteq \left[\Phi(x) \leq \alpha + \frac{G^2(1-\beta\mu)}{2\mu}\right] \cap [\|\nabla_y f(x,y)\| \leq G]\\
         &\hspace{4.8cm}\subseteq \left[\Phi(x) \leq \alpha + \frac{G^2(1-\beta\mu)}{2\mu}\right] \cap \left[\|y - \ys(x)\| \leq \frac{G}{\mu}\right],
    \end{align*}
    which is a bounded set. 
\end{proof}

In the following, we prove some technical lemmas for later sections. It is worth noting that in Lemma \ref{lem:various-gradient-bounds0} and \ref{lem:various-gradient-bounds1}, we only require $\beta$ to be positive.
\begin{lem}
    \label{lem:various-gradient-bounds0}
    Suppose Assumption \ref{Assumption_1} holds and $\beta > 0$. Let $\mathcal{D}$ be a compact set and let $\df$ be $\sLD$-Lipschitz continuous in $\mathcal{D}$. 
    Then, for any $(x, y) \in \mathcal{D}$, we have 
    \begin{align}
        \|\dhy(x,y)\|  &\leq (1 +\beta \sLD) \|\dfy(x,y)\|, \nonumber \\ %\label{eq:upper-bd-dhy}\\
        \|\dhx(x,y)\| &\leq \|\dfx(x,y)\| + \beta \sLD \|\dfy (x,y)\|. \nonumber%\label{eq:upper-bd-dhx}
    \end{align}
\end{lem}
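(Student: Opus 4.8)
The plan is to read off both bounds directly from the gradient formulas for $\hb$ established in Lemma \ref{lem:properties-of-h}, namely
$\dhy(x,y) = [I_m + \beta \ddfyy(x,y)]\dfy(x,y)$ and
$\dhx(x,y) = \dfx(x,y) + \beta \ddfxy(x,y)\dfy(x,y)$,
combined with operator-norm bounds on the Hessian blocks of $f$ that come from the Lipschitz continuity of $\df$ on $\mathcal{D}$.

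The key observation is that if $\df$ is $\sLD$-Lipschitz on $\mathcal{D}$, then the full Hessian $\nabla^2 f(x,y)$ has operator norm at most $\sLD$ on $\mathcal{D}$ (since a $C^1$ map with $L$-Lipschitz derivative has derivative bounded by $L$ in operator norm), and consequently every block — in particular $\ddfyy(x,y)$ and $\ddfxy(x,y)$ — also has operator norm at most $\sLD$ (a submatrix/block of a matrix has operator norm no larger than the whole matrix). With this in hand, the first bound follows from the triangle inequality applied to $\dhy$:
$\|\dhy(x,y)\| \le \|I_m\,\dfy(x,y)\| + \beta\|\ddfyy(x,y)\dfy(x,y)\| \le \|\dfy(x,y)\| + \beta\sLD\|\dfy(x,y)\| = (1+\beta\sLD)\|\dfy(x,y)\|$.
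The second bound is similar: $\|\dhx(x,y)\| \le \|\dfx(x,y)\| + \beta\|\ddfxy(x,y)\|\,\|\dfy(x,y)\| \le \|\dfx(x,y)\| + \beta\sLD\|\dfy(x,y)\|$.

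Honestly, there is no real obstacle here — this is a routine triangle-inequality estimate. The only point that deserves a sentence of care is justifying that the relevant Hessian blocks inherit the $\sLD$ bound from the Lipschitz modulus of the joint gradient $\df$: one should note that $\|\ddfyy(x,y)\|_{\mathrm{op}} \le \|\nabla^2 f(x,y)\|_{\mathrm{op}}$ and that $\|\nabla^2 f(x,y)\|_{\mathrm{op}} \le \sLD$ for $(x,y)\in\mathcal{D}$ because $\nabla^2 f$ is the derivative of the $\sLD$-Lipschitz map $\df$ (using that $\mathcal{D}$ is where the modulus is measured; continuity of $\nabla^2 f$ from Assumption \ref{Assumption_1} lets one pass from difference quotients to the derivative bound). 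Everything else is bookkeeping with $\|\cdot\|$ and $\beta>0$.
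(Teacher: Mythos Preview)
Your proposal is correct and follows exactly the paper's approach: apply the gradient formulas for $\hb$ from Lemma \ref{lem:properties-of-h} and bound the Hessian blocks by $\sLD$ via the Lipschitz assumption, then use the triangle inequality. The paper itself omits the proof as ``a straightforward application of Lemma \ref{lem:properties-of-h},'' which is precisely what you wrote out.
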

The proof of this lemma is a straightforward application of Lemma \ref{lem:properties-of-h}, and is thus omitted.  Next, we present two preliminary lemmas that relate the gradient of the regularized objective function $\hb$ and the gradient of the original objective $f$.  
\iffalse
\begin{proof}
    Fix $(x,y) \in \mathcal{D}$ and $\beta > 0$. Invoking the triangular inequality and $\sLD$-smoothness of $f$ yields
    \begin{align*}
    \|\dhy(x,y)\| &= \|[I_m + \beta \ddfyy(x,y)] \dfy(x,y)\| \leq (1+\beta \sLD) \|\dfy(x,y)\|, \\
    \|\dhx(x,y)\| &= \|\dfx(x,y) + \beta\ddfyy(x,y)\dfy(x,y)\| \leq \|\dfx(x,y)\| + \beta\sLD\|\dfy(x,y)\|.
    \end{align*}
    Hence we complete the proof. 
\end{proof}
\fi

\begin{lem}
    \label{lem:various-gradient-bounds1}
   Under the same conditions of Lemma \ref{lem:various-gradient-bounds0}, for any $(x, y) \in \mathcal{D}$, it holds that
    \begin{align} 
        2\inner{\dhx(x, y), -\dfx(x, y)} + \|\nabla_x f(x, y)\|^2
        &\leq \beta^2\sLD^2 \|\nabla_yf(x,y)\|^2; \nonumber \\ %\label{eq:search-dir-bd-gda-x}\\
        (1-C^{-1}) \|\nabla_x f(x,y)\|^2 - \beta^2  (C - 1) \sLD^2 \|\dfy(x,y)\|^2 &\leq \|\dhx(x,y)\|^2, \quad \forall C > 1. \nonumber %\label{eq:lower-bd-gradient-hx-generalC}
    \end{align}
\end{lem}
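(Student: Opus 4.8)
The plan is to start from the gradient identity in Lemma~\ref{lem:properties-of-h}, namely $\dhx(x,y) = \dfx(x,y) + \beta\,\ddfxy(x,y)\dfy(x,y)$. Write $r := \beta\,\ddfxy(x,y)\dfy(x,y)$, so that $\dhx(x,y) = \dfx(x,y) + r$. Since $\df$ is $\sLD$-Lipschitz on $\mathcal{D}$, the operator norm of $\ddfxy(x,y)$ is at most $\sLD$, hence $\|r\| \le \beta\sLD\|\dfy(x,y)\|$. All the estimates below are elementary consequences of this decomposition together with this single bound on $\|r\|$.

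For the first inequality, expand $2\inner{\dhx(x,y),-\dfx(x,y)} + \|\dfx(x,y)\|^2 = 2\inner{\dfx(x,y)+r,-\dfx(x,y)} + \|\dfx(x,y)\|^2 = -\|\dfx(x,y)\|^2 - 2\inner{r,\dfx(x,y)}$. It remains to bound $-\|\dfx(x,y)\|^2 - 2\inner{r,\dfx(x,y)}$ from above by $\|r\|^2$: indeed $-\|\dfx(x,y)\|^2 - 2\inner{r,\dfx(x,y)} \le \|r\|^2$ is equivalent to $0 \le \|\dfx(x,y)\|^2 + 2\inner{r,\dfx(x,y)} + \|r\|^2 = \|\dfx(x,y)+r\|^2 = \|\dhx(x,y)\|^2$, which is trivially true. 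Then substitute $\|r\|^2 \le \beta^2\sLD^2\|\dfy(x,y)\|^2$ to conclude.

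For the second inequality, fix $C>1$ and use the reverse-triangle/Young-type bound $\|\dhx(x,y)\|^2 = \|\dfx(x,y)+r\|^2 \ge (1-C^{-1})\|\dfx(x,y)\|^2 - (C-1)\|r\|^2$. This particular split follows from the inequality $\|a+b\|^2 \ge (1-t)\|a\|^2 + (1-t^{-1})\|b\|^2$ for any $t\in(0,1)$ (equivalently $\|a+b\|^2 \ge (1-t)\|a\|^2 - (t^{-1}-1)\|b\|^2$), applied with $t = C^{-1}$, $a = \dfx(x,y)$, $b = r$; one verifies it by expanding and using $2\inner{a,b} \ge -t\|a\|^2 - t^{-1}\|b\|^2$. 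Finally insert $\|r\|^2 \le \beta^2\sLD^2\|\dfy(x,y)\|^2$ to obtain $(1-C^{-1})\|\dfx(x,y)\|^2 - \beta^2(C-1)\sLD^2\|\dfy(x,y)\|^2 \le \|\dhx(x,y)\|^2$.

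There is no real obstacle here; the only point requiring a line of care is justifying that $\|\ddfxy(x,y)\| \le \sLD$ on $\mathcal{D}$ — this is standard since Lipschitz continuity of $\df$ with modulus $\sLD$ implies $\|\nabla^2 f(x,y)\| \le \sLD$, and $\ddfxy$ is a submatrix block of $\nabla^2 f$, so its operator norm is no larger. Everything else is a two-line expansion of a square. I would present both inequalities in one short proof, introducing $r$ once and reusing the bound $\|r\| \le \beta\sLD\|\dfy(x,y)\|$ for both.
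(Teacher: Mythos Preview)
Your proof is correct and follows essentially the same route as the paper: both arguments rest on the decomposition $\dhx = \dfx + r$ with $r = \beta\,\ddfxy\,\dfy$, the bound $\|r\|\le\beta\sLD\|\dfy\|$, and Young's inequality with parameter $C$ for the second estimate. Your observation that the first inequality is literally equivalent to $\|\dhx\|^2\ge 0$ is a slightly cleaner phrasing of the paper's $C=1$ instance of Young, but the content is identical.
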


\begin{proof}
    Invoking the Young's inequality, that is, $C \|v\|^2 +C^{-1} \|w\|^2 \ge 2\inner{v,w}$, $\forall C > 0$, and Lipschitz continuity of $\df$ over $\mathcal{D}$ yields
    \begin{equation}
        \label{eq:upper-lower-bd-inter0}
        \begin{aligned}
        &2\inner{\beta\ddfxy(x, y)\dfy(x, y), -\dfx(x, y)}\\
        \leq& C\beta^2 \|\ddfxy(x,y)\dfy(x, y)\|^2 + C^{-1} \|\dfx(x, y)\|^2, \\
        \leq& {C\beta^2\sLD^2} \|\dfy(x, y)\|^2 + C^{-1} \|\dfx(x, y)\|^2. 
    \end{aligned}
    \end{equation}
    This implies that for any $C > 0$,
    \begin{align}
        2\inner{\dhx(x, y), -\dfx(x, y)}
        &= 2\inner{\dfx(x, y) + \beta\ddfxy(x, y)\dfy(x, y), -\dfx(x, y)} \nonumber\\
        &\leq -(2-C^{-1}) \|\dfx(x, y)\|^2 + {C\beta^2\sLD^2} \|\dfy(x, y)\|^2. \nonumber %\label{eq:upper-lower-bd-inter2}
    \end{align}
    Setting $C = 1$ yields the first inequality. For the second inequality, observe that
    \begin{align*}
    &\|\nabla_x \hb(x,y)\|^2 
    = \|\nabla_x f(x,y) + \beta \nabla^2_{xy} f(x,y) \nabla_y f(x,y)\|^2 \nonumber\\
    =& \|\nabla_x f(x,y)\|^2 + \beta^2\|\nabla^2_{xy} f(x,y) \nabla_y f(x,y)\|^2 + 2\inner{\nabla_x f(x,y), \nabla^2_{xy} f(x,y) \nabla_y f(x,y)} \nonumber\\
    \geq& (1-C^{-1}) \|\nabla_x f(x,y)\|^2 - \beta^2  (C - 1) \|\nabla^2_{xy} f(x,y)\nabla_y f(x,y)\|^2 \nonumber\\
    \geq& (1-C^{-1}) \|\nabla_x f(x,y)\|^2 - \beta^2  (C - 1) \sLD^2 \|\dfy(x,y)\|^2, \quad \forall C > 1, 
    \nonumber
    \end{align*}
    where the first inequality follows from the first inequality in $\eqref{eq:upper-lower-bd-inter0}$ and 
    the last line is due to $C>1$ and Lipschitz continuity of $\nabla f$ over $\mathcal{D}$. This completes the proof.
\end{proof}

For the last technical lemma of this subsection, we require $\beta > \mu^{-1}$.
\begin{lem}
    \label{lem:various-gradient-bounds2}
     Suppose Assumption \ref{Assumption_1} holds and $\beta > \mu^{-1}$. It holds that
        \begin{align}
            \inner{\dhy(x, y), \dfy(x, y)}
            &\leq -(\beta \mu - 1) \|\nabla_y f(x, y)\|^2; \nonumber \\ %\label{eq:search-dir-bd-gda-y}\\
            (\beta \mu - 1) \|\dfy(x,y)\| &\leq \|\dhy(x,y)\|.\nonumber %\label{eq:lower-bd-gradient-hy}
        \end{align}
\end{lem}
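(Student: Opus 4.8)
The plan is to reduce everything to the explicit formula $\dhy(x,y) = [I_m + \beta\ddfyy(x,y)]\dfy(x,y)$ established in Lemma \ref{lem:properties-of-h}, combined with the second-order characterization of strong concavity. Since $f$ is thrice continuously differentiable (Assumption \ref{Assumption_1}), $\ddfyy(x,y)$ is a symmetric matrix, and $\mu$-strong concavity of $y \mapsto f(x,y)$ forces $\ddfyy(x,y) \preceq -\mu I_m$ for every $(x,y) \in \Rn \times \Rm$. Therefore, because $\beta > \mu^{-1}$ (so $1-\beta\mu < 0$), the symmetric matrix $M := I_m + \beta\ddfyy(x,y)$ satisfies $M \preceq (1-\beta\mu)I_m \prec 0$; equivalently, $-M \succeq (\beta\mu-1)I_m \succ 0$, so all eigenvalues of the symmetric positive definite matrix $-M$ are at least $\beta\mu - 1 > 0$.

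For the first inequality, I would simply expand the inner product as a quadratic form: $\inner{\dhy(x,y),\dfy(x,y)} = \dfy(x,y)^\top M\,\dfy(x,y) \leq (1-\beta\mu)\norm{\dfy(x,y)}^2 = -(\beta\mu-1)\norm{\dfy(x,y)}^2$, using the bound $M \preceq (1-\beta\mu)I_m$. For the second inequality, I would write $\norm{\dhy(x,y)} = \norm{M\,\dfy(x,y)} = \norm{(-M)\dfy(x,y)}$ and invoke the elementary fact that a symmetric positive definite matrix $A$ with smallest eigenvalue $\lambda_{\min}(A)$ satisfies $\norm{Av} \geq \lambda_{\min}(A)\norm{v}$ for all $v$; applying this with $A = -M$ and $v = \dfy(x,y)$, and using $\lambda_{\min}(-M) \geq \beta\mu - 1$, yields $\norm{\dhy(x,y)} \geq (\beta\mu-1)\norm{\dfy(x,y)}$.

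There is no real obstacle in this lemma; it is a short linear-algebra computation. The only points that require a bit of care are (i) noting that $\ddfyy$ is symmetric so that the quadratic-form and operator-norm estimates are legitimate, and (ii) keeping track of the sign flip caused by $\beta > \mu^{-1}$, which is precisely what turns the (a priori indefinite-looking) matrix $I_m + \beta\ddfyy$ into a negative definite one with a quantitative spectral gap.
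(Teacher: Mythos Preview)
Your proposal is correct and follows essentially the same approach as the paper: both arguments use the formula $\dhy = [I_m + \beta\ddfyy]\dfy$ together with $\ddfyy \preceq -\mu I_m$ to conclude that $I_m + \beta\ddfyy \preceq -(\beta\mu-1)I_m \prec 0$, and then read off the two inequalities as a quadratic-form bound and an operator-norm bound, respectively.
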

\begin{proof}
    Let us fix an arbitrary $\beta > \mu^{-1}$. Since $\ddfyy(x,y) \preceq -\mu I_m$, it holds that
    \begin{equation*}
        %\label{eq:Id+Hess-bound}
        I_m + \beta \ddfyy(x,y) \preceq -(\beta\mu-1) I_m \prec 0.
    \end{equation*}
    This implies that
    \begin{equation*}
    \begin{aligned}
    \inner{\dhy(x, y), \dfy(x, y)} &= \inner{[I_m + \beta \ddfyy(x,y)] \dfy(x, y), \dfy(x, y)}\\ &\leq -(\beta\mu-1) \|\dfy(x, y)\|^2,
    \end{aligned}
    \end{equation*}
    and that
    \begin{equation*}
    \|\nabla_y \hb(x,y)\| = \|[I_m+\beta\nabla^2_{yy} f(x,y)]\nabla_y f(x,y)\| \geq (\beta \mu - 1) \|\nabla_y f(x,y)\|,
    \end{equation*}
    as desired.
\end{proof}

\section{A Line-search Framework for Minimax Problems}
\label{sec:linesearch-framework}

\subsection{Equivalence Between \eqref{Prob_Ori} and \eqref{Prob_Min}}\label{subsec:opt-cond-equiv}
In this subsection, we establish the equivalence between the minimax problem \eqref{Prob_Ori} and the regularized minimization problem \eqref{Prob_Min} when $\beta > \mu^{-1}$. First, let us begin with the equivalence between the $\varepsilon$-first-order minimax (stationary) points of the two problems.

\begin{prop}
    \label{prop:epsilon_FOSP1}
    Suppose Assumption \ref{Assumption_1} holds and $\beta > \mu^{-1}$. Let $(x^*, y^*)$ be an $\varepsilon$-first-order minimax point of \eqref{Prob_Ori}, $\varepsilon\geq0$. Let $\mathcal{D}$ be a compact set that contains $(x^*, y^*)$ and $f$ be $\sLD$-smooth in $\mathcal{D}$. Then $(x^*, y^*)$ is an $(1+\beta\sLD)\varepsilon$-first-order stationary point of \eqref{Prob_Min}.
\end{prop}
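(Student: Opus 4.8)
The plan is to directly translate the $\varepsilon$-first-order minimax condition on \eqref{Prob_Ori} into a bound on the gradient of $\hb$ using the explicit gradient formulas from Lemma \ref{lem:properties-of-h} together with the crude bounds in Lemma \ref{lem:various-gradient-bounds0}. Since $(x^*,y^*)$ is an $\varepsilon$-first-order minimax point, Definition \ref{def:opt-cond-ori} gives $\|\dfx(x^*,y^*)\|\le\varepsilon$ and $\|\dfy(x^*,y^*)\|\le\varepsilon$; and since $(x^*,y^*)\in\mathcal{D}$ with $f$ being $\sLD$-smooth on $\mathcal{D}$, Lemma \ref{lem:various-gradient-bounds0} applies at the point $(x^*,y^*)$.

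First I would bound the $y$-component: by Lemma \ref{lem:various-gradient-bounds0},
\[
\|\dhy(x^*,y^*)\| \le (1+\beta\sLD)\|\dfy(x^*,y^*)\| \le (1+\beta\sLD)\varepsilon.
\]
Next the $x$-component: again by Lemma \ref{lem:various-gradient-bounds0},
\[
\|\dhx(x^*,y^*)\| \le \|\dfx(x^*,y^*)\| + \beta\sLD\|\dfy(x^*,y^*)\| \le \varepsilon + \beta\sLD\varepsilon = (1+\beta\sLD)\varepsilon.
\]
Both components are therefore bounded by $(1+\beta\sLD)\varepsilon$, so by Definition \ref{def:opt-cond-min} the point $(x^*,y^*)$ is an $(1+\beta\sLD)\varepsilon$-first-order stationary point of \eqref{Prob_Min}, as claimed. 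The case $\varepsilon=0$ is included and recovers that an exact first-order minimax point of \eqref{Prob_Ori} is an exact first-order stationary point of \eqref{Prob_Min}.

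I do not anticipate any genuine obstacle here — the statement is essentially a bookkeeping corollary of Lemma \ref{lem:various-gradient-bounds0}, and the only thing to be careful about is that the hypothesis $\beta>\mu^{-1}$ is not actually needed for this direction (only $\beta>0$ is used), matching the remark preceding Lemma \ref{lem:various-gradient-bounds2} that the stronger condition $\beta>\mu^{-1}$ is reserved for the converse implication and the coercivity/lower-boundedness results. One should also confirm that $\mathcal{D}$ may be taken to contain $(x^*,y^*)$ with the Lipschitz modulus $\sLD$ finite, which is guaranteed by Assumption \ref{Assumption_1}(1) (thrice continuous differentiability implies local Lipschitz smoothness on any compact set).
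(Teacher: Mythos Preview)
Your proposal is correct and follows exactly the paper's own proof: the paper simply invokes the definition of an $\varepsilon$-first-order minimax point to get $\|\dfx(x^*,y^*)\|\le\varepsilon$, $\|\dfy(x^*,y^*)\|\le\varepsilon$, and then applies Lemma~\ref{lem:various-gradient-bounds0} directly. Your additional observation that the hypothesis $\beta>\mu^{-1}$ is not actually used in this direction is also accurate.
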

\begin{proof}
    By definition of $\varepsilon$-first-order minimax point, we have $\|\dfy(x^*, y^*)\| \leq \varepsilon$, $\|\dfx(x^*, y^*)\| \le \varepsilon.$ Then directly applying Lemma \ref{lem:various-gradient-bounds0} proves the proposition. 
    %which, by \eqref{eq:upper-bd-dhy} and \eqref{eq:upper-bd-dhx}, implies that
    %\begin{align*}
    %    \|\dhy(x^*, y^*)\| \leq (1+\beta\sLD) \varepsilon \qaq
    %    \|\dhx(x^*, y^*)\| \leq (1+\beta\sLD) \varepsilon,
    %\end{align*} 
    %as desired.
\end{proof}

\begin{prop}
    \label{prop:epsilon_FOSP2}
    Suppose Assumption \ref{Assumption_1} holds and $\beta > \mu^{-1}$. Let $(x^*, y^*)$ be an $\varepsilon$-first-order stationary point of \eqref{Prob_Min}, $\epsilon\geq0$. Let $\mathcal{D}$ be a compact set that contains $(x^*, y^*)$ and  $f$ be $\sLD$-smooth in $\mathcal{D}$. Then $(x^*, y^*)$ is a $\sqrt{2[1+(\beta\sLD)^2/(\beta\mu-1)^2]}\cdot\varepsilon$-first-order minimax point of \eqref{Prob_Ori}.
\end{prop}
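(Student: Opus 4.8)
The plan is to read off the two conclusions of Lemmas~\ref{lem:various-gradient-bounds1} and~\ref{lem:various-gradient-bounds2} with $\mathcal D$ the given compact set, starting from the hypothesis $\|\dhx(x^*, y^*)\| \le \varepsilon$ and $\|\dhy(x^*, y^*)\| \le \varepsilon$, and then bound $\|\dfx(x^*, y^*)\|$ and $\|\dfy(x^*, y^*)\|$ separately. First I would invoke the second inequality of Lemma~\ref{lem:various-gradient-bounds2}, which is available because $\beta > \mu^{-1}$: it gives $(\beta\mu - 1)\|\dfy(x^*, y^*)\| \le \|\dhy(x^*, y^*)\| \le \varepsilon$, hence
\[
\|\dfy(x^*, y^*)\| \le \frac{\varepsilon}{\beta\mu - 1}.
\]

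Next I would control $\|\dfx(x^*, y^*)\|$ using the second inequality of Lemma~\ref{lem:various-gradient-bounds1}, namely $(1 - C^{-1})\|\dfx(x^*, y^*)\|^2 - \beta^2(C-1)\sLD^2 \|\dfy(x^*, y^*)\|^2 \le \|\dhx(x^*, y^*)\|^2 \le \varepsilon^2$ for all $C > 1$. Taking $C = 2$ and rearranging yields $\|\dfx(x^*, y^*)\|^2 \le 2\varepsilon^2 + 2\beta^2\sLD^2\|\dfy(x^*, y^*)\|^2$, and substituting the bound on $\|\dfy(x^*, y^*)\|$ from the previous step gives
\[
\|\dfx(x^*, y^*)\|^2 \le 2\varepsilon^2 + \frac{2\beta^2\sLD^2}{(\beta\mu-1)^2}\,\varepsilon^2 = 2\left[1 + \frac{(\beta\sLD)^2}{(\beta\mu-1)^2}\right]\varepsilon^2,
\]
which is exactly the claimed bound for the $x$-component.

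It then remains to check that the $y$-component also obeys the same (larger) constant, i.e. that $\tfrac{1}{(\beta\mu-1)^2} \le 2\big[1 + \tfrac{(\beta\sLD)^2}{(\beta\mu-1)^2}\big]$. Here the only genuinely non-mechanical observation is that $\sLD \ge \mu$: since $f(x,\cdot)$ is $\mu$-strongly concave we have $\ddfyy \preceq -\mu I_m$, while $\sLD$-Lipschitzness of $\nabla f$ forces $\|\ddfyy\| \le \sLD$, so $\mu \le \sLD$ and therefore $(\beta\sLD)^2 \ge (\beta\mu)^2 > 1$ (using $\beta > \mu^{-1}$). Consequently $1 \le 2(\beta\sLD)^2 \le 2(\beta\mu-1)^2\big[1 + \tfrac{(\beta\sLD)^2}{(\beta\mu-1)^2}\big]$, which gives the needed inequality and hence $\|\dfy(x^*, y^*)\| \le \sqrt{2[1 + (\beta\sLD)^2/(\beta\mu-1)^2]}\cdot\varepsilon$ as well. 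Combining the two component bounds finishes the proof. I do not anticipate a real obstacle here — the argument is essentially two substitutions into previously established lemmas plus the elementary remark $\sLD \ge \mu$ to unify the constants; the only thing to be careful about is the choice of the free parameter $C$ in Lemma~\ref{lem:various-gradient-bounds1}, and $C = 2$ already yields the stated constant.
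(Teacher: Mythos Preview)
Your proposal is correct and follows exactly the paper's approach: invoke the second inequality of Lemma~\ref{lem:various-gradient-bounds2} to bound $\|\dfy(x^*,y^*)\|$, then the second inequality of Lemma~\ref{lem:various-gradient-bounds1} with $C=2$ to bound $\|\dfx(x^*,y^*)\|$. The only difference is that you explicitly verify the $y$-component is also dominated by the stated constant via the observation $\sLD \ge \mu$, whereas the paper simply writes ``combining the above two inequalities completes the proof'' and leaves that check implicit.
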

\begin{proof}
    By definition of $\varepsilon$-first-order stationary point, we have $\|\dhy(x^*, y^*)\| \leq \varepsilon$, $\|\dhx(x^*, y^*)\| \leq \varepsilon.$
    By the second inequality of Lemma \ref{lem:various-gradient-bounds2}, it holds that
    \begin{gather*}
        \|\dfy(x^*, y^*)\| \leq \varepsilon / (\beta \mu - 1).
    \end{gather*}
    Together with the second inequality of Lemma \ref{lem:various-gradient-bounds1} ($C = 2$), we further obtain
    \begin{align*}
        \|\dfx(x^*, y^*)\|^2 &\leq 2\|\dhx(x^*, y^*)\|^2 + 2\beta^2\sLD^2\|\dfy(x^*, y^*)\|^2\\
        &\leq 2[1+\beta^2\sLD^2/(\beta \mu - 1)^2] \varepsilon^2.
    \end{align*}
    Combining the above two inequalities, we complete the proof of the proposition. 
\end{proof}

In particular, if one adopts the common assumption that %the function 
$f$ is globally $L$-smooth, then one can replace the local Lipschitz constants $L_{f,\mathcal{D}}$ in Proposition \ref{prop:epsilon_FOSP1} and \ref{prop:epsilon_FOSP2} with a simple global constant $L$. Nevertheless, throughout this paper, we do not require the global Lipschitz smoothness of $f$ thanks to our ability to implement line-search.
By Proposition \ref{prop:epsilon_FOSP1} and \ref{prop:epsilon_FOSP2} with $\epsilon=0$, we immediately have the following theorem on the equivalence of first-order minimax (stationary) points for \eqref{Prob_Ori} and \eqref{Prob_Min}.  
\begin{theo}
\label{thm:equivalence_FOSP}
Given Assumption \ref{Assumption_1} and $\beta > \mu^{-1}$, $(x^*, y^*)$ is a first-order minimax point of \eqref{Prob_Ori} if and only if it is a first-order stationary point of \eqref{Prob_Min}. 
\end{theo}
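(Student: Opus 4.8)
The plan is to deduce Theorem \ref{thm:equivalence_FOSP} directly from the two preceding propositions by specializing to the exact case $\varepsilon = 0$. First I would recall that a first-order minimax point of \eqref{Prob_Ori} is, by Definition \ref{def:opt-cond-ori}, exactly a point $(x^*,y^*)$ with $\dfx(x^*,y^*) = 0$ and $\dfy(x^*,y^*) = 0$; equivalently, it is a $0$-first-order minimax point. Similarly, by Definition \ref{def:opt-cond-min}, a first-order stationary point of \eqref{Prob_Min} is a point with $\dhx(x^*,y^*) = 0$ and $\dhy(x^*,y^*) = 0$, i.e.\ a $0$-first-order stationary point.

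For the forward direction, suppose $(x^*,y^*)$ is a first-order minimax point of \eqref{Prob_Ori}. Since $\Rn \times \Rm$ is not compact, I would first choose a compact set $\mathcal{D}$ containing $(x^*,y^*)$ — for instance a closed ball $\cB_1(x^*,y^*)$ — and let $\sLD$ be the Lipschitz modulus of $\df$ on $\mathcal{D}$, which is finite because $f$ is thrice continuously differentiable by Assumption \ref{Assumption_1}. Then Proposition \ref{prop:epsilon_FOSP1} with $\varepsilon = 0$ gives that $(x^*,y^*)$ is a $(1+\beta\sLD)\cdot 0 = 0$-first-order stationary point of \eqref{Prob_Min}, i.e.\ a first-order stationary point of \eqref{Prob_Min}. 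The reverse direction is symmetric: given a first-order stationary point of \eqref{Prob_Min}, pick the same kind of compact $\mathcal{D}$, apply Proposition \ref{prop:epsilon_FOSP2} with $\varepsilon = 0$, and conclude that $(x^*,y^*)$ is a $\sqrt{2[1+(\beta\sLD)^2/(\beta\mu-1)^2]}\cdot 0 = 0$-first-order minimax point of \eqref{Prob_Ori}; here we use $\beta > \mu^{-1}$ to ensure $\beta\mu - 1 > 0$ so the constant is well-defined.

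There is essentially no obstacle here — this theorem is a corollary packaging two already-proven propositions. The only point worth a sentence of care is the choice of the auxiliary compact neighborhood $\mathcal{D}$: the propositions are stated locally (Lipschitz smoothness only on $\mathcal{D}$), whereas the theorem is global, so one must observe that for \emph{any} point of $\Rn\times\Rm$ such a $\mathcal{D}$ exists and the constants, while point-dependent, are irrelevant once $\varepsilon = 0$ annihilates them. I would present the whole argument in a few lines, explicitly invoking Proposition \ref{prop:epsilon_FOSP1} and Proposition \ref{prop:epsilon_FOSP2} with $\varepsilon = 0$ and noting that Definitions \ref{def:opt-cond-ori} and \ref{def:opt-cond-min} identify the notion of (exact) first-order point with the $0$-approximate version.
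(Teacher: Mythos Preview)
Your proposal is correct and matches the paper's own argument exactly: the paper states the theorem as an immediate consequence of Propositions \ref{prop:epsilon_FOSP1} and \ref{prop:epsilon_FOSP2} with $\varepsilon = 0$. Your added remark about choosing an auxiliary compact neighborhood $\mathcal{D}$ is a reasonable clarification, but the core idea is identical.
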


Next, we present the equivalence of global and local solutions for \eqref{Prob_Ori} and \eqref{Prob_Min}.

\begin{theo}
    \label{thm:equivalence-global-local}
   Given Assumption \ref{Assumption_1} and $\beta > \mu^{-1}$, $(x^*, y^*)$ is a global (local) minimax point of \eqref{Prob_Ori} if and only if it is a global (local) minimizer of \eqref{Prob_Min}. 
\end{theo}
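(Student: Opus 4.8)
The plan is to exploit the two key identities already established: on one hand, $\hb(x,\ys(x)) = \Phi(x)$ for every $x$ (used in the proof of Lemma \ref{lem:properties-of-h}), and on the other hand the sandwich inequality $\hb(x,y) \geq \Phi(x) \geq f(x,y)$ for all $(x,y)$, which holds precisely because $\beta > \mu^{-1}$. The global case is then almost immediate: if $(x^*,y^*)$ is a global minimax point, then $x^*$ minimizes $\Phi$ and $y^* = \ys(x^*)$, so $\hb(x^*,y^*) = \Phi(x^*) = \bar\Phi = \inf \hb$ by Lemma \ref{lem:properties-of-h}, hence $(x^*,y^*)$ is a global minimizer of \eqref{Prob_Min}. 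Conversely, if $(x^*,y^*)$ is a global minimizer of \eqref{Prob_Min}, then $\hb(x^*,y^*) = \bar\Phi$; combining $\bar\Phi \le \Phi(x^*) \le \hb(x^*,y^*) = \bar\Phi$ forces $\Phi(x^*) = \bar\Phi$ (so $x^*$ minimizes $\Phi$) and forces equality in $\hb(x^*,y^*) \ge \Phi(x^*)$, which by the strong-concavity bound $\hb(x,y) - \Phi(x) \ge (\tfrac\beta2 - \tfrac1{2\mu})\|\dfy(x,y)\|^2$ (inequality \eqref{eq:h-coer-inter1}) and $\beta > \mu^{-1}$ forces $\dfy(x^*,y^*) = 0$, i.e. $y^* = \ys(x^*)$.

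For the local case I would argue similarly but need to manage the radii of the neighborhoods carefully. Suppose $(x^*,y^*)$ is a local minimax point, so $y^* = \ys(x^*)$ and $x^*$ minimizes $\Phi$ over $\cB_\delta(x^*)$ for some $\delta > 0$. Take any $(x,y) \in \cB_{\delta'}(x^*,y^*)$ with $\delta'$ small enough that the $x$-component stays in $\cB_\delta(x^*)$; then $\hb(x,y) \ge \Phi(x) \ge \Phi(x^*) = \hb(x^*, \ys(x^*)) = \hb(x^*,y^*)$, so $(x^*,y^*)$ is a local minimizer of \eqref{Prob_Min}. Conversely, suppose $(x^*,y^*)$ is a local minimizer of \eqref{Prob_Min} on $\cB_\delta(x^*,y^*)$. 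First, from $\hb(x^*,y^*) \le \hb(x^*, y)$ for $y$ near $y^*$ together with $\hb(x^*,\cdot)$ being... wait, this is the subtle point. We know $y^* = \ys(x^*)$ should follow, but we must derive it: consider the curve $y \mapsto \hb(x^*, y)$; since $(x^*,y^*)$ is a local min of $\hb$, $\dhy(x^*,y^*) = 0$, and by the second inequality of Lemma \ref{lem:various-gradient-bounds2}, $(\beta\mu - 1)\|\dfy(x^*,y^*)\| \le \|\dhy(x^*,y^*)\| = 0$, so $\dfy(x^*,y^*) = 0$, i.e. $y^* = \ys(x^*)$. Then for $x \in \cB_{\delta'}(x^*)$ with $\delta'$ chosen so that $(x,\ys(x)) \in \cB_\delta(x^*,y^*)$ — possible since $\ys$ is continuous by Lemma \ref{lem:properties-of-value-fcn} and $\ys(x^*) = y^*$ — we get $\Phi(x) = \hb(x,\ys(x)) \ge \hb(x^*,y^*) = \Phi(x^*)$, so $x^*$ is a local minimizer of $\Phi$.

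The main obstacle, and the only place requiring genuine care rather than bookkeeping, is the converse direction of the local case: extracting $y^* = \ys(x^*)$ from local minimality of $\hb$, and then choosing the neighborhood radius $\delta'$ in $x$-space so that the "lifted" point $(x,\ys(x))$ lands inside the given ball $\cB_\delta(x^*,y^*)$ on which $\hb$-minimality holds. The first part is handled cleanly by Lemma \ref{lem:various-gradient-bounds2} (noting it applies since $\beta > \mu^{-1}$), and the second by continuity of $\ys$. I would also double-check that $\hb$ is differentiable at $(x^*,y^*)$ so that the first-order condition $\dhy(x^*,y^*) = 0$ is legitimate — this is guaranteed by Lemma \ref{lem:properties-of-h}. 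Everything else is a direct chain of inequalities built from \eqref{eq:h-coer-inter1} and the identity $\hb(x,\ys(x)) = \Phi(x)$.
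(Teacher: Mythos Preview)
Your proposal is correct and follows essentially the same approach as the paper's proof: both directions rest on the sandwich $\hb(x,y)\ge\Phi(x)$ from \eqref{eq:h-coer-inter1}, the identity $\hb(x,\ys(x))=\Phi(x)$, and continuity of $\ys$ from Lemma~\ref{lem:properties-of-value-fcn}. The only cosmetic difference is that, in the converse local direction, the paper obtains $y^*=\ys(x^*)$ by invoking Theorem~\ref{thm:equivalence_FOSP} (local minimizer $\Rightarrow$ first-order stationary point $\Rightarrow$ first-order minimax point), whereas you derive it directly from $\dhy(x^*,y^*)=0$ and Lemma~\ref{lem:various-gradient-bounds2}; the underlying mechanism is identical.
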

\begin{proof}
    Suppose $(x^*, y^*)$ is a local minimizer of \eqref{Prob_Min}. Then there exists $\delta > 0$ s.t. 
    \[
    \hb(x^*, y^*) \leq \hb(x,y), \quad \forall (x,y)\in\cB_\delta(x^*,y^*).
    \]
    Furthermore, $(x^*,y^*)$ must be a first-order stationary point of \eqref{Prob_Min}. Then Theorem \ref{thm:equivalence_FOSP} implies that $\dfy(x^*,y^*) = 0$ and $y^* \in \ys(x^*)$. %, which further implies $y^* \in \ys(x^*)$ by concavity of $f(x^*,\cdot)$. 
    Therefore, one must have $\hb(x^*,y^*) = \Phi(x^*)$. Also note that $\ys$ is continuous by Lemma \ref{lem:properties-of-value-fcn}, and thus there exists $\delta' > 0$ such that $(x, \ys(x)) \in \cB_\delta(x^*,y^*)$ for all $x \in \cB_{\delta'}(x^*)$. Then it holds that
    \[
    \Phi(x^*) = \hb(x^*, y^*) \leq \hb(x,\ys(x)) = \Phi(x),\quad \forall x \in \cB_{\delta'}(x^*),
    \]
    which implies that $(x^*, y^*)$ is a local minimax point of \eqref{Prob_Ori}.

    Conversely, suppose $(x^*, y^*)$ is a local minimax point of \eqref{Prob_Ori}. Then, there exists $\delta > 0$ such that 
    \[
    \Phi(x^*) \leq \Phi(x),\;\; \forall x \in \cB_\delta(x^*)  \qaq y^* = \ys(x^*).
    \]
    Take any $(x, y) \in \cB_{\delta}(x^*, y^*)$. Then $x \in \cB_\delta(x^*)$ and by \eqref{eq:h-coer-inter1} it holds that
    \begin{align*}
        \hb(x,y) \geq \Phi(x) + \frac{\beta - \mu^{-1}}{2} \|\dfy (x,y)\|^2 \geq 
        \Phi(x) \geq \Phi(x^*) = \hb(x^*,y^*),
    \end{align*}
    which implies $(x^*, y^*)$ is a local minimizer of \eqref{Prob_Min}. The  global case follows similarly.
\end{proof}

In the end of this subsection, we establish the equivalence of \eqref{Prob_Ori} and \eqref{Prob_Min} with regard to their SONC and SOSC.

\begin{theo}
\label{thm:second-order-equvilance}
Given Assumption \ref{Assumption_1} and $\beta > \mu^{-1}$, $(x^*, y^*)$ satisfies the SONC (SOSC) of \eqref{Prob_Ori} if and only if it satisfies the SONC (SOSC) of \eqref{Prob_Min}. %Furthermore, $(x^*, y^*)$ satisfies the SOSC of \eqref{Prob_Ori} if and only if it satisfies the SOSC of \eqref{Prob_Min}.
\end{theo}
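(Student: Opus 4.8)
The plan is to reduce the second-order equivalence to a relationship between the Hessians $\nabla^2 h_\beta(x^*,y^*)$ and $\nabla^2\Phi(x^*)$ at a point $(x^*,y^*)$ that is already known to be first-order stationary for both problems (by Theorem \ref{thm:equivalence_FOSP}, since SONC/SOSC presuppose first-order stationarity, and the two notions of first-order stationarity coincide). At such a point we have $\dfy(x^*,y^*)=0$ and $y^*=\ys(x^*)$, so the third-order terms in \eqref{eq:formula-for-ddhyy} vanish and the Hessian of $h_\beta$ collapses to the clean block form
\begin{equation*}
\nabla^2 h_\beta(x^*,y^*)=
\begin{pmatrix}
\ddfxx & \ddfxy + \beta\,\ddfxy\ddfyy\\
\ddfyx + \beta\,\ddfyy\ddfyx & \ddfyy + \beta\,[\ddfyy]^2
\end{pmatrix},
\end{equation*}
with all blocks evaluated at $(x^*,y^*)=(\zs)$. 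Writing $A=\ddfxx$, $B=\ddfxy$, $D=\ddfyy$ (with $D\preceq-\mu I_m\prec0$), this is $\begin{pmatrix}A & B(I+\beta D)\\ (I+\beta D)B\tp & (I+\beta D)D\end{pmatrix}$. The key algebraic observation is that this matrix factors as $P\tp M P$ where $P=\mathrm{diag}(I_n,\,I_m+\beta D)$ is invertible (because $I+\beta D\preceq-(\beta\mu-1)I\prec0$ when $\beta>\mu^{-1}$) and $M=\begin{pmatrix}A & B\\ B\tp & D\end{pmatrix}$ — wait, that is not quite right since the $(1,1)$ block would then be $A$ unchanged but conjugation changes it; more precisely I would use a block congruence (Schur-complement-style) transformation that sends $\nabla^2 h_\beta(x^*,y^*)$ to a block-diagonal matrix whose $(1,1)$ block is exactly the Schur complement $A-B D^{-1}B\tp=\ddfxx - \ddfxy[\ddfyy]^{-1}\ddfyx=\nabla^2\Phi(x^*)$ (using \eqref{eq:grad-hessian-of-Phi}) and whose $(2,2)$ block is $(I+\beta D)D$, which is symmetric positive definite since $I+\beta D\prec0$ and $D\prec0$.

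Concretely, I would carry out the steps in this order. First, invoke Theorem \ref{thm:equivalence_FOSP} to note that $(x^*,y^*)$ is first-order minimax stationary for \eqref{Prob_Ori} iff first-order stationary for \eqref{Prob_Min}, so in either direction we may assume $\dfy(\zs)=0$, $y^*=\ys(x^*)$, and substitute into \eqref{eq:formula-for-ddhyy}. Second, perform the block congruence: let $T=\begin{pmatrix}I_n & 0\\ -D^{-1}B\tp(I+\beta D)^{-1} & I_m\end{pmatrix}$ — actually the cleanest route is to first strip off the invertible factor $P=\mathrm{diag}(I_n, I+\beta D)$ on the $y$-block by a congruence, reducing $\nabla^2 h_\beta$ to $\begin{pmatrix}A & B\\ B\tp & D\end{pmatrix}\cdot(\text{adjusted})$, and then do the standard Schur complement congruence; I will arrange the two congruences so that $\nabla^2 h_\beta(x^*,y^*)$ is congruent to $\mathrm{diag}\big(\nabla^2\Phi(x^*),\,(I_m+\beta\ddfyy)\ddfyy\big)$. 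Third, observe $(I_m+\beta\ddfyy)\ddfyy\succ0$ unconditionally (product of two commuting negative-definite symmetric matrices), so Sylvester's law of inertia gives: $\nabla^2 h_\beta(x^*,y^*)\succeq0 \iff \nabla^2\Phi(x^*)\succeq0$ and $\nabla^2 h_\beta(x^*,y^*)\succ0 \iff \nabla^2\Phi(x^*)\succ0$. Matching this against Definitions \ref{second-order-opt-cond-ori} and \ref{second-order-opt-cond-min} finishes both the SONC and the SOSC equivalences.

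The main obstacle is getting the congruence bookkeeping exactly right: the $y$-block of $\nabla^2 h_\beta$ is $(I+\beta D)D$ rather than $D$, and the off-diagonal block carries an extra $(I+\beta D)$ factor, so a single naive Schur-complement step does not immediately produce $\nabla^2\Phi$. I expect to need the two-stage congruence (first conjugate by $\mathrm{diag}(I_n,(I+\beta D)^{-1/2})$ or by $\mathrm{diag}(I_n,(I+\beta D)^{-1})$ applied asymmetrically, then Schur-eliminate) and to verify that the resulting $(1,1)$ block is literally $\ddfxx-\ddfxy[\ddfyy]^{-1}\ddfyx$; this is where the algebra must be done carefully, though it is still routine linear algebra once the substitution $\dfy(\zs)=0$ is in place. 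A minor additional point worth stating explicitly is that $I_m+\beta\ddfyy$ is invertible and negative definite precisely because $\beta>\mu^{-1}$, which is exactly the standing hypothesis, so no extra assumption is needed.
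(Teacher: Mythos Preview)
Your approach is essentially the paper's: at a common first-order stationary point the third-order terms in \eqref{eq:formula-for-ddhyy} vanish, the $(2,2)$ block $\ddhyy=(I_m+\beta\ddfyy)\ddfyy$ is positive definite (since $\beta>\mu^{-1}$), and one then identifies the Schur complement of $\nabla^2 h_\beta$ with $\nabla^2\Phi$. The paper does this by a direct one-line Schur complement calculation rather than your two-stage congruence, but the underlying linear algebra is the same.

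There is, however, a concrete algebraic slip that would derail your computation as written: the $(1,1)$ block of $\nabla^2 h_\beta(x^*,y^*)$ is \emph{not} $\ddfxx$ but $\ddfxx+\beta\,\ddfxy\ddfyx$ (the term $\beta\ddfxy\ddfyx$ in \eqref{eq:formula-for-ddhyy} does not depend on $\dfy$ and survives). In your $A,B,D$ notation the Hessian is
\[
\nabla^2 h_\beta(x^*,y^*)=\begin{pmatrix} A+\beta BB^\top & B(I_m+\beta D)\\ (I_m+\beta D)B^\top & (I_m+\beta D)D\end{pmatrix}.
\]
With the missing $\beta BB^\top$, your congruence would produce $A-BD^{-1}B^\top-\beta BB^\top$ in the $(1,1)$ slot, which is not $\nabla^2\Phi(x^*)$. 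Once you restore the correct block, the single Schur complement step is already clean:
\[
(A+\beta BB^\top)-B(I_m+\beta D)\,[(I_m+\beta D)D]^{-1}\,(I_m+\beta D)B^\top
= A+\beta BB^\top - B(I_m+\beta D)D^{-1}B^\top = A-BD^{-1}B^\top=\nabla^2\Phi(x^*),
\]
and the standard Schur complement characterization (equivalently, Sylvester after one congruence) finishes the proof. Your two-stage congruence is unnecessary and is precisely where you anticipated bookkeeping trouble; the paper avoids it entirely.
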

\begin{proof}
    For second-order equivalence, let $(x^*, y^*)$ be a first-order minimax point of \eqref{Prob_Ori} (or a first-order stationary point of \eqref{Prob_Min}).
    By \eqref{eq:formula-for-ddhyy}, it holds that
    \[
    \ddhyy(x^*, y^*) = \ddfyy (x^*, y^*) + \beta[\ddfyy (x^*, y^*)]^2.
    \]
    Let the eigen-decomposition of $\ddfyy (x^*, y^*)$ be $U D U^\top$, where $U$ is orthogonal and $D$ is a diagonal matrix with negative entries. Since $\beta > \mu^{-1}$ and each diagonal entry of $D$ is less than or equal to $-\mu$, we know $D + \beta D^2 \succ0$, which further implies that 
    \[ 
    \ddhyy(x^*, y^*) = 
    U[D + \beta D^2]U^\top \succ 0.
    \]
    Then, the Schur complement of $\nabla^2 \hb(x^*, y^*)$ in \eqref{eq:hessian-of-h} is
    \begin{align*}
        &\ddhxx(x^*, y^*) - \ddhxy (x^*, y^*)[\ddhyy(x^*, y^*)]^{-1} \ddhyx(x^*, y^*)\\
        =& \ddfxx(x^*, y^*) + \beta\ddfxy(x^*, y^*)\ddfyx(x^*, y^*) - \\&\hspace{1.5cm}
        \ddfxy(x^*, y^*)  [\ddfyy(x^*, y^*)]^{-1}[I_m + \beta\ddfyy(x^*, y^*)]\ddfyx(x^*, y^*)\\
        =& \ddfxx(x^*, y^*) - 
        \ddfxy(x^*, y^*)[\ddfyy(x^*, y^*)]^{-1}\ddfyx(x^*, y^*)
        = \nabla^2 \Phi(x^*).
    \end{align*}
    Hence, we have
    \[
    \nabla^2 \hb(x^*, y^*) \succ 0 \iff \nabla^2 \Phi(x^*) \succ 0
    \qaq
    \nabla^2 \hb(x^*, y^*) \succeq 0 \iff \nabla^2 \Phi(x^*) \succeq 0.
    \]
    This establishes the second-order equivalence and completes the proof.
\end{proof} 

\subsection{Line-search Framework and Global Convergence Properties}
\label{subsec:line-search-framework}
With the previous preparation, we are ready to formally introduce and analyze our line-search framework. In each iteration $k$, the updates take the form
\begin{align*}
    \ykp ={}& \yk + \eyk \nabla_y f(\xk, \yk),\\
    \xkp ={}& \xk + \exk \dxk,
\end{align*}
where  
$\eyk, \exk > 0$ are the step sizes determined by line-search and $\dxk \in \Rn$ is the search direction for the primal variable $x$. Note that for gradient-descent-ascent type algorithms, the two step sizes $\eta_{y,k}$ and $\eta_{x,k}$ often need to have different scales, see e.g. \cite{lin2020gradient}, and both of them should be searched. Yet in the backtracking line-search context, it is not a good idea to do grid search for two parameters simultaneously. Therefore, in this paper, the above update scheme is carried out in an alternating style. That is, at $(x_k,y_k)$, we make a backtracking line-search to determine the parameter $\eta_{y,k}$ and obtain $y_{k+1}$ first, and then we construct the direction $d_{x,k}$ at the point $(x_k,y_{k+1})$ and make a backtracking line-search to determine $\eta_{x,k}$. In particular, we assume that $\dxk$ satisfies the following assumption, which shall be verified in later discussion with detailed construction of $\dxk$. 

\begin{assumpt}%[\bf Search directions]
    \label{ass:search-directions}  
    For every $k \geq 0$, there exists $a_1,a_2 > 0$ such that 
    \begin{equation}\label{eq:search-dir-ass-x}
    \begin{split}
         \|\dxk\| &\leq a_1 \|\nabla_x f(\xk, \ykp)\|\\ \inner{\dxk ,\dfx(\xk,\ykp)} &\leq -a_2 \|\dfx (\xk, \ykp)\|^2.
    \end{split}
    \end{equation}
\end{assumpt} 
\begin{rmk}
We make two remarks regarding this assumption.
\begin{enumerate}
    \item The requirement \eqref{eq:search-dir-ass-x} for the search direction in the $x$-variable is exactly the gradient-related search directions in line-search-based methods for minimization problems, see, e.g., \cite{grippo1986nonmonotone,zhang2004nonmonotone,cartis2015worst,grapiglia2021generalized}. It can cover a variety of practical search directions, see \cite[Section 1.2]{Bertsekas2016NonlinearProgramming}.
    \item Since $f$ is strongly concave in the $y$-variable, which admits considerably more structure than in the $x$-variable, it suffices to focus on the gradient-ascent search direction in $y$. Nevertheless, our line-search framework can accommodate any search directions $\dyk \in \Rm$ satisfying  
    \[
    \inner{\dhy(\xk,\yk), \dyk} \leq -c \|\dfy(\xk, \yk)\|^2,  \quad \text{for some } c > 0.
    \]
\end{enumerate} 
\end{rmk}
 
In the following, we introduce a bounded iterates assumption. This assumption and its variants (such as bounded level sets or directly assuming $\nabla f$ to be globally Lipschitz) are common in the literature for nonmonotone line-search methods, see, e.g., \cite{grippo1986nonmonotone,grippo1989truncated,zhang2004nonmonotone,cartis2015worst,lu2017randomized,grapiglia2021generalized}.  

\begin{assumpt}%[\bf Bounded iterates]
    \label{ass:bounded-iterates}
    The iterates $\{(\xk, \yk)\}_k$ generated by our line-search framework are bounded. That is, there exists $M > 0$ s.t. $(\xk, \yk) \in \cB_{M}(0)$, %\subseteq \Rn \times \Rm$, 
    for all $k \geq 0$.
\end{assumpt}

\begin{rmk}
As a result of Assumption \ref{Assumption_1} and Lemma \ref{lem:properties-of-h}, both $f$ and $\hb$ are twice continuously differentiable. One direct consequence of Assumption \ref{ass:bounded-iterates} is that the gradients $\df$ and $\dhb$ are Lipschitz continuous over the ball  $\cB_{M}(0)$, with Lipschitz moduli denoted by $\sL$ and $\sLh$, respectively.
\end{rmk} 

Before introducing our line-search schemes, we analyze the inner products between partial gradients of the Lyapunov function $\hb$ and our search directions $\dyk$ and $\dxk$.

\begin{lem}
    \label{lem:inner-product-bds}
    Suppose Assumption \ref{Assumption_1} and \ref{ass:bounded-iterates} hold and $\beta > \mu^{-1}$. It holds that
    \begin{align}
        \inner{ \nabla_y \hb(\xk, \yk), \dfy(\xk, \yk)} &\leq -b_1 \|\dfy(\xk, \yk)\|^2, 
        \label{eq:search-dir-requi-y}\\
        \inner{ \nabla_x \hb(\xk, \ykp), \dxk} &\leq -b_2 \|\nabla_x f(\xk, \ykp)\|^2 + b_3 \|\nabla_y f(\xk, \ykp)\|^2,\label{eq:search-dir-requi-x}
    \end{align}
    where $b_1 := \beta \mu - 1$, $b_2 := \frac{a_2}{2}$, $b_3 := \frac{(\sL \beta a_1)^2}{2a_2}.$
\end{lem}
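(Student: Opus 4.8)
The plan is to prove the two inequalities separately, each by combining one of the technical lemmas from Section~\ref{subsec:preliminary-lemmas} with the structural assumption on the search directions, using that all relevant points lie in the compact ball $\cB_M(0)$ so that $\df$ is $\sL$-Lipschitz there (by Assumption~\ref{ass:bounded-iterates} and the following remark). The first inequality \eqref{eq:search-dir-requi-y} is essentially immediate: since $\beta>\mu^{-1}$, the first inequality of Lemma~\ref{lem:various-gradient-bounds2} applied at $(\xk,\yk)$ gives exactly $\inner{\dhy(\xk,\yk),\dfy(\xk,\yk)}\leq -(\beta\mu-1)\|\dfy(\xk,\yk)\|^2$, which is the claim with $b_1=\beta\mu-1$. (Here the search direction in $y$ is just the gradient $\dfy(\xk,\yk)$, so no extra work is needed; if one instead used a general $\dyk$ as in the Remark, one would just carry the constant $c$ through.)

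For the second inequality \eqref{eq:search-dir-requi-x}, I would work at the point $(\xk,\ykp)$ and expand
\[
\inner{\dhx(\xk,\ykp),\dxk}
= \inner{\dfx(\xk,\ykp),\dxk} + \beta\inner{\ddfxy(\xk,\ykp)\dfy(\xk,\ykp),\dxk}.
\]
The first term is bounded above by $-a_2\|\dfx(\xk,\ykp)\|^2$ using the second relation in Assumption~\ref{ass:search-directions}. For the cross term, I would apply Cauchy--Schwarz together with the Lipschitz bound $\|\ddfxy(\xk,\ykp)\|\leq\sL$ (valid on $\cB_M(0)$) and the size bound $\|\dxk\|\leq a_1\|\dfx(\xk,\ykp)\|$ from the first relation of Assumption~\ref{ass:search-directions}, giving
\[
\beta\inner{\ddfxy(\xk,\ykp)\dfy(\xk,\ykp),\dxk}
\leq \beta\sL a_1\,\|\dfy(\xk,\ykp)\|\,\|\dfx(\xk,\ykp)\|.
\]
Then Young's inequality splits this product; choosing the weight so that the $\|\dfx\|^2$ coefficient is $a_2/2$ yields $\beta\sL a_1\|\dfy\|\|\dfx\|\leq \tfrac{a_2}{2}\|\dfx\|^2 + \tfrac{(\sL\beta a_1)^2}{2a_2}\|\dfy\|^2$. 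Adding the two bounds gives
\[
\inner{\dhx(\xk,\ykp),\dxk}\leq -\tfrac{a_2}{2}\|\dfx(\xk,\ykp)\|^2 + \tfrac{(\sL\beta a_1)^2}{2a_2}\|\dfy(\xk,\ykp)\|^2,
\]
which is exactly \eqref{eq:search-dir-requi-x} with $b_2=a_2/2$ and $b_3=(\sL\beta a_1)^2/(2a_2)$.

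There is no real obstacle here; the only points requiring a little care are bookkeeping ones: making sure the Lipschitz constant $\sL$ is the one associated with the ball $\cB_M(0)$ (so that $\|\ddfxy\|\leq\sL$ holds uniformly along the iterates, including at the intermediate point $(\xk,\ykp)$, which also lies in $\cB_M(0)$ by Assumption~\ref{ass:bounded-iterates}), and choosing the Young's-inequality weight consistently so that the advertised constants $b_2,b_3$ come out. One could alternatively derive \eqref{eq:search-dir-requi-x} by invoking the first inequality of Lemma~\ref{lem:various-gradient-bounds1} to control the $\dfx$--$\dhx$ discrepancy, but the direct expansion above is cleaner and makes the dependence on $a_1,a_2$ transparent.
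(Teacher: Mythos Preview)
Your proposal is correct and follows essentially the same approach as the paper: the first inequality is quoted directly from Lemma~\ref{lem:various-gradient-bounds2}, and the second is obtained by expanding $\dhx=\dfx+\beta\ddfxy\dfy$, using Assumption~\ref{ass:search-directions} on the $\dfx$-part, and controlling the cross term by Young's inequality with the weight that produces $b_2=a_2/2$ and $b_3=(\sL\beta a_1)^2/(2a_2)$. The only cosmetic difference is that the paper applies the vector form of Young's inequality directly to $2\inner{\beta\ddfxy\dfy,\dxk}$ and then invokes $\|\dxk\|\leq a_1\|\dfx\|$, whereas you first pass through Cauchy--Schwarz and the bound $\|\dxk\|\leq a_1\|\dfx\|$ before applying scalar Young's; the two computations are equivalent and yield identical constants.
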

\begin{proof}
    The inequality \eqref{eq:search-dir-requi-y} is exactly the first inequality in Lemma \ref{lem:various-gradient-bounds2}. %\eqref{eq:search-dir-bd-gda-y}. 
    To show \eqref{eq:search-dir-requi-x}, observe that
    \begin{align*}
        &2\inner{ \nabla_x \hb(\zkh), \dxk}
        = 2\inner{ \dfx(\zkh) + \beta \ddfyx(\zkh) \dfy(\zkh), \dxk}\\
        \leq& -2a_2 \|\dfx(\zkh)\|^2 + 2\inner{\beta\ddfyx(\zkh) \dfy(\zkh), \dxk}\\
        \leq& -2a_2 \|\dfx(\zkh)\|^2 + {\beta^2 a_1^2}{a_2^{-1}} \|\ddfyx(\zkh) \dfy(\zkh)\|^2 + {a_2}{a_1^{-2}} \|\dxk\|^2\\
        \leq& -{a_2} \|\dfx(\zkh)\|^2 + {\beta^2 \sL^2 a_1^2}{a_2^{-1}} \|\dfy(\zkh)\|^2,
    \end{align*}
    where the first inequality follows from \eqref{eq:search-dir-ass-x}, the second inequality is due to Young's inequality, and the last line is by \eqref{eq:search-dir-ass-x} and the $\sL$-smoothness of $f$ within the region $\mathcal{B}_M(0)$, see Assumption \ref{ass:bounded-iterates}.
\end{proof}

For the line-search criteria, in this paper, we mainly consider the ZH-type (Zhang-Hager) nonmonotone modified Armijo line-search rule \cite{zhang2004nonmonotone,hager2005new}, as specified in \eqref{eq:linesearch-condition-y}--\eqref{eq:ZH-weighted-fcn-value} of Algorithm \ref{alg:line-search-framework}. It is clear that $\tk \equiv 1$ corresponds to the classic Armijo line-search \cite{armijo1966minimization}. We use the gradient norm of $f$ to measure descent instead of the inner products $\inner{\dhy(\xk,\yk), \dfy(\xk,\yk)}$ and $\inner{\dhx(\xk,\ykp), \dxk}$ for two reasons. First, for reasonable search direction such as $\dxk = -\dfx(\xk,\ykp)$, it does not necessarily satisfy $\inner{\dhx(\xk,\ykp), \dxk} < 0$.
Second, the computation of $\nabla f$ is cheaper than that of $\dhb$, which already involves the computation of $\nabla f$ and a Hessian-vector product of $f$.

\begin{algorithm}[h]
\caption{Line-search framework for minimax problem \eqref{Prob_Ori}.}
\label{alg:line-search-framework}
\begin{algorithmic}[1]
\State{\textbf{Initialization:} $x_0 \in \Rn$, $y_0 \in \Rm$, $\beta > \mu^{-1}$, $k = 0$, $\eta_x, \eta_y > 0$, $\alpha \in (0, 1)$, $0 \leq \gamma_x < \gamma_y < 1$, $\tau \in (0, 1]$, $\{\tau_k\}_k \subseteq [\tau, 1]$, and $H_0 = \hb(x_0, y_0)$.}
%parameters in conditions \eqref{eq:linesearch-condition-y}, \eqref{eq:linesearch-condition-x}, and \eqref{eq:ZH-weighted-fcn-value}.}
\While{termination criteria not met}
\State{Choose $\eyk$ to be the largest element in $\{\eta_y \alpha^l: l = 0,1,2,\ldots\}$ satisfying
\begin{equation} \label{eq:linesearch-condition-y}
    \hb(\xk, \yk + \eyk \dfy(\xk,\yk)) \leq \Hk -\gamma_y b_1 \eyk \|\nabla_y f(\xk, \yk)\|^2. 
\end{equation}}
\vspace{-4mm}
\State{$\ykp \gets \yk + \eyk \dfy(\xk,\yk)$.}%, where $\eyk$ is determined by the line-search scheme \eqref{eq:linesearch-condition-y}}.
\State{Compute a search direction $\dxk$ satisfying \eqref{eq:search-dir-requi-x}.}
\State{Choose $\exk$ to be the largest element in $\{\eta_x \alpha^l: l = 0,1,2,\ldots\}$ satisfying
\begin{equation}  \label{eq:linesearch-condition-x}
\begin{split}
    %\xkp = \xk + \exk \dxk(\xk, \ykt), \quad \text{where} \quad \ykt = \yk \text{ or } \ykp, \\
    \hb(\xk + \exk \dxk, \ykp) &\leq 
    \Hk -  \\ &\hspace{-2cm}\gamma_x \big(b_1\eyk \|\nabla_y f(\xk, \yk)\|^2 + b_2\exk \|\nabla_x f(\xk, \ykp)\|^2  \big), 
\end{split}
\end{equation}}
\vspace{-2mm}
\State{$\xkp \gets \xk + \exk \dxk$.} %, where $\exk$ is determined by the line-search procedure \eqref{eq:linesearch-condition-x}.}
\State{Update the weighted average $H_{k+1}$ with \begin{equation}
    \label{eq:ZH-weighted-fcn-value}
    \Hkp = (1-\tk)\Hk + \tk \hb(\xkp, \ykp).
\end{equation}}
%\eqref{eq:ZH-weighted-fcn-value}}.
\vspace{-4mm}
\State{$k \gets k+1$}.
\EndWhile
\end{algorithmic}
\end{algorithm}

% That is, the step size $\eyk$ is the largest element in $\{\eta_y \alpha^l: l = 0,1,2,\ldots\}$ satisfying
% \begin{align} \label{eq:linesearch-condition-y}
%     \hb(\xk, \yk + \eyk \dfy(\xk,\yk)) &\leq \Hk -\gamma_y b_1 \eyk \|\nabla_y f(\xk, \yk)\|^2, %\quad \text{where} \quad \dyk = \dfy(\xk,\yk),%\inner{ \nabla_y \hb(\xk, \yk), \dyk^k},
% \end{align}
% and the step size in the $x$ direction $\exk$ is the largest element in $\{\eta_x \alpha^l: l = 0,1,2,\ldots\}$ satisfying
% \begin{equation}  \label{eq:linesearch-condition-x}
% \begin{gathered}
%     %\xkp = \xk + \exk \dxk(\xk, \ykt), \quad \text{where} \quad \ykt = \yk \text{ or } \ykp, \\
%     \hb(\xk + \exk \dxk, \ykp) \leq 
%     \Hk -  \gamma_x \Big(b_1\eyk \|\nabla_y f(\xk, \yk)\|^2 + b_2\exk \|\nabla_x f(\xk, \ykp)\|^2  \Big), 
% \end{gathered}
% \end{equation}
% where $\eta_x, \eta_y > 0$, $\alpha \in (0, 1)$, $0 \leq \gamma_x < \gamma_y < 1$, $H_0 = \hb(x_0, y_0)$, and for $k \geq 0$,
% \begin{equation}
%     \label{eq:ZH-weighted-fcn-value}
%     \Hkp = (1-\tk)\Hk + \tk \hb(\xkp, \ykp), \quad \tk \in [\tau, 1] \text{ for some } \tau \in (0, 1].
% \end{equation}

The nonmonotone line-search schemes \eqref{eq:linesearch-condition-y}--\eqref{eq:ZH-weighted-fcn-value} can also serve as a globalization strategy for adaptive step sizes such as the Barzilai-Borwein (BB) step size \cite{barzilai1988two,raydan1997barzilai}, which can often lead to fast convergence in practical applications, see Section \ref{subsec:BB} and Section \ref{sec:numerical experiments}. Other line-search conditions, such as Armijo-Wolfe line-search \cite{wolfe1969convergence,wolfe1971convergence}, Armijo-Goldstein line-search \cite{goldstein1965steepest}, and Grippo-Lampariello-Lucidi-type nonmonotone line-search \cite{grippo1986nonmonotone}, can be incorporated into our framework similarly.

Synthesizing the preceding discussions, we formally introduce our line-search framework Algorithm \ref{alg:line-search-framework} for minimax problem \eqref{Prob_Ori}. A simple termination criteria for the line-search framework would be $\|\df(\xk, \yk)\| \leq \varepsilon$. In the remainder of this section, we always assume $\beta > \mu^{-1}$ for the convenience of discussion. In Section \ref{subsec:parameter-free}, we present a procedure for adaptively estimating $\mu$ (and $\beta$) and a fully parameter-free method with robust theoretical guarantees.

We now provide a few basic properties of our line-search scheme \eqref{eq:linesearch-condition-y}--\eqref{eq:ZH-weighted-fcn-value}. 

\begin{prop}
    \label{prop:linesearch-terminates-finitely}
    Suppose Assumption \ref{Assumption_1}, \ref{ass:search-directions}, and \ref{ass:bounded-iterates} hold and $\beta > \mu^{-1}$. The following statements hold.
    \begin{enumerate}
        \item The sequence $\{\Hk\}_k$ is nonincreasing with $H_k \geq \hb(\xk,\yk)$, for each $ k \geq 0$.
        \item The backtracking procedures in \eqref{eq:linesearch-condition-y} and \eqref{eq:linesearch-condition-x} both terminate finitely for every $k \geq 0$.
        \item There exists $\bar \eta_x, \bar \eta_y > 0$ such that $\exk \geq \bar \eta_x$ and $\eyk \geq \bar \eta_y$, for all $k \geq 0$.
        \item There exists a real number $\bar H \geq \bar \Phi$ such that
        \[
        \lim_{k \to \infty} H_k = \lim_{k \to \infty} \hb(\xk, \yk) = \bar H.
        \]
    \end{enumerate}
\end{prop}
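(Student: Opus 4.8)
The plan is to establish items (1), (2), (3) by a single induction on $k$, since the $y$- and $x$-line-searches at iteration $k$ rely on $\Hk\ge\hb(\xk,\yk)$, whereas the update \eqref{eq:ZH-weighted-fcn-value} of $\Hkp$ in turn presupposes that \eqref{eq:linesearch-condition-x} has already succeeded. The induction hypothesis is simply $\Hk\ge\hb(\xk,\yk)$, which holds at $k=0$ with equality because $H_0=\hb(x_0,y_0)$. At the outset I fix once and for all a closed ball $\D\supseteq\cB_M(\bzr)$ large enough to contain every line-search trial point; this is legitimate because the iterates lie in $\cB_M(\bzr)$ by Assumption \ref{ass:bounded-iterates}, the $y$-trial displacements are bounded by $\eta_y\sup_{\cB_M(\bzr)}\|\dfy\|$, and the $x$-trial displacements by $\eta_x a_1\sup\|\dfx\|$ over the relevant bounded set using \eqref{eq:search-dir-ass-x}. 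Let $\mathsf{L}$ be a common Lipschitz modulus of $\nabla f$ and $\dhb$ on $\D$; every threshold below depends only on $\mathsf{L},\mu,\beta$ and the algorithmic parameters, never on $k$.

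For the inductive step, consider first the $y$-line-search \eqref{eq:linesearch-condition-y}. Combining the descent lemma for $y\mapsto\hb(\xk,y)$ on $\D$, the bound $\inner{\dhy(\xk,\yk),\dfy(\xk,\yk)}\le-b_1\|\dfy(\xk,\yk)\|^2$ from Lemma \ref{lem:inner-product-bds}, and the induction hypothesis $\hb(\xk,\yk)\le\Hk$, one sees that \eqref{eq:linesearch-condition-y} is satisfied for every step size below an explicit, $k$-independent threshold of order $b_1(1-\gamma_y)/\mathsf{L}$ (the degenerate case $\dfy(\xk,\yk)=\bzr$ being immediate). Hence the backtracking terminates after finitely many halvings, and the standard ``last rejected trial'' argument produces a uniform lower bound $\eyk\ge\eyb>0$. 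Moreover \eqref{eq:linesearch-condition-y} now gives $\hb(\xk,\ykp)\le\Hk-\gamma_y b_1\eyk\|\dfy(\xk,\yk)\|^2\le\Hk$, which is passed to the $x$-step.

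The $x$-line-search \eqref{eq:linesearch-condition-x} is the crux. The descent lemma for $x\mapsto\hb(x,\ykp)$ on $\D$, together with $\inner{\dhx(\zkh),\dxk}\le-b_2\|\dfx(\zkh)\|^2+b_3\|\dfy(\zkh)\|^2$ from Lemma \ref{lem:inner-product-bds} and $\|\dxk\|\le a_1\|\dfx(\zkh)\|$ from \eqref{eq:search-dir-ass-x}, yields an upper estimate on $\hb(\xkp,\ykp)$ carrying an undesirable cross term $b_3\exk\|\dfy(\zkh)\|^2$. The point is that this term is absorbed by the ``surplus'' $(\gamma_y-\gamma_x)b_1\eyk\|\dfy(\xk,\yk)\|^2$ that remains in the right-hand side of \eqref{eq:linesearch-condition-x} once $\hb(\xk,\ykp)\le\Hk-\gamma_y b_1\eyk\|\dfy(\xk,\yk)\|^2$ is substituted, provided (a) $\gamma_x<\gamma_y$, so this surplus is strictly positive; (b) $\eyk\ge\eyb$, the uniform $y$-step bound from the previous paragraph; and (c) $\|\dfy(\zkh)\|\le(1+\mathsf{L}\eta_y)\|\dfy(\xk,\yk)\|$, which follows from $\mathsf{L}$-smoothness and $\ykp=\yk+\eyk\dfy(\xk,\yk)$ with $\eyk\le\eta_y$. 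Choosing $\exk$ below the $k$-independent threshold $\min\{2(1-\gamma_x)b_2/(\mathsf{L}a_1^2),\ (\gamma_y-\gamma_x)b_1\eyb/(b_3(1+\mathsf{L}\eta_y)^2)\}$ then makes \eqref{eq:linesearch-condition-x} hold, so the backtracking terminates finitely with a uniform lower bound $\exk\ge\exb>0$; this proves (2)--(3). It also gives $\hb(\xkp,\ykp)\le\Hk$, so that $\Hkp=(1-\tk)\Hk+\tk\hb(\xkp,\ykp)$ lies between $\hb(\xkp,\ykp)$ and $\Hk$ (using $1-\tk\ge0$ and $\Hk\ge\hb(\xkp,\ykp)$): this closes the induction and proves the monotonicity in (1). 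I expect the $x$-step to be the main obstacle, as one must control the cross term using the gap $\gamma_y-\gamma_x>0$, the already-secured bound $\eyb$, and the growth estimate (c) simultaneously, which is exactly what forces the two-timescale scaling $\exk\lesssim\eyk$; a secondary technicality is fixing the enlarged ball $\D$ up front to keep all Lipschitz constants uniform in $k$.

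Finally, item (4). By (1), $\{\Hk\}$ is nonincreasing and $\Hk\ge\hb(\xk,\yk)\ge\inf_{x,y}\hb(x,y)=\bar\Phi>-\infty$ by Lemma \ref{lem:properties-of-h} (where $\beta>\mu^{-1}$ is used), so $\Hk\downarrow\bar H$ for some $\bar H\ge\bar\Phi$ and $\Hkp-\Hk\to0$. From $\hb(\xk,\yk)\le\Hk$ we get $\limsup_k\hb(\xk,\yk)\le\bar H$. Conversely, rewriting \eqref{eq:ZH-weighted-fcn-value} as $\hb(\xkp,\ykp)=\Hk+\tk^{-1}(\Hkp-\Hk)$ and using $\Hkp-\Hk\le0$ and $\tk^{-1}\le\tau^{-1}$ (since $\tk\ge\tau$), we obtain $\hb(\xkp,\ykp)\ge\Hk+\tau^{-1}(\Hkp-\Hk)\to\bar H$, hence $\liminf_k\hb(\xk,\yk)\ge\bar H$. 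Therefore $\lim_k\hb(\xk,\yk)=\lim_k\Hk=\bar H\ge\bar\Phi$, as claimed.
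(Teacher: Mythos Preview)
Your proposal is correct and follows essentially the same approach as the paper's proof: the same induction on $k$ with hypothesis $\Hk\ge\hb(\xk,\yk)$, the same descent-lemma estimates combined with Lemma \ref{lem:inner-product-bds}, the same absorption of the cross term $b_3\exk\|\dfy(\zkh)\|^2$ via the surplus $(\gamma_y-\gamma_x)b_1\eyk\|\dfy(\xk,\yk)\|^2$ using the bound $\|\dfy(\zkh)\|\le(1+\mathsf{L}\eta_y)\|\dfy(\xk,\yk)\|$, and the same argument for part (4). The one place where you are slightly more careful than the paper is in explicitly enlarging the ball $\D$ to contain all backtracking \emph{trial} points (not just the accepted iterates) before fixing the Lipschitz constant, which the paper glosses over by applying the descent lemma with $\sLh$ on $\cB_M(\bzr)$ directly.
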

\begin{proof}
    We first prove part 1, 2, and 3 jointly by induction. By definition, we have $H_0 = \hb(z_0)$. Suppose we have established $H_k \geq \hb(\zk)$ for some $k \geq 0$. We now prove that the line-search procedures in \eqref{eq:linesearch-condition-y} and \eqref{eq:linesearch-condition-x} terminate finitely in iteration $k$. Since $\hb$ is $\sLh$-smooth over $\cB_{M}(0)$, by the standard descent lemma, it holds that 
    \vspace{-1.5ex}
    \begin{align*}
        \hb(\xk, \yk + \eta \dfy(\zk)) 
        &\leq \hb(z_k) + \eta \inner{\dhy(\zk), \dfy(\zk)} + \frac{\sLh\eta^2}{2} \|\dfy(\zk)\|^2\\
        &\leq \hb(z_k) - \eta\Big(b_1 - \frac{\sLh}{2}\eta\Big) \|\dfy(\zk)\|^2.
    \end{align*}
    where the last line follows from \eqref{eq:search-dir-requi-y}. 
    Since $\hb(z_k) \leq \Hk$, the line-search condition \eqref{eq:linesearch-condition-y} can be fulfilled by step size $\eta \in (0, \eta_y]$ satisfying
    \begin{equation*}
    %\label{eq:step-size-insight1}
        b_1 - \frac{\sLh}{2}\eta \geq \gamma_y b_1 \iff \eta \leq \frac{2 b_1 (1-\gamma_y)}{\sLh}
    \end{equation*}
    and thus the backtracking for \eqref{eq:linesearch-condition-y} terminates finitely in iteration $k$.
    Furthermore, the backtracking procedure would guarantee
    \begin{equation}
        \label{eq:step-size-y-lower-bd}
        \eyk \geq \min\left\{\frac{2 \alpha b_1 (1-\gamma_y)}{\sLh}, \eta_y\right\} =: \eyb > 0.
    \end{equation}
    %where $\alpha \in (0,1)$ is the backtracking factor and $\eta_y > 0$ is the initial step size for line-search.
    
    %If $\ykt = \ykp$ in \eqref{eq:linesearch-condition-x}, 
    \noindent Now, for the line-search procedure in \eqref{eq:linesearch-condition-x}, again, by the descent lemma, it holds that
    \begin{equation*}
        %\label{eq:prop-inter1}
        \begin{aligned}
        &\hb(\xk + \eta\dxk,\ykp) 
        \leq \hb(\zkh) + \eta \inner{\dhx(\zkh), \dxk} + \frac{\sLh\eta^2 }{2} \|\dxk\|^2 \\
        \leq& \hb(\zkh) - \eta\left[b_2 \|\dfx(\zkh)\|^2 - b_3 \|\dfy(\zkh)\|^2\right] + \frac{\sLh\eta^2a_1^2}{2} \|\dfx(\zkh)\|^2 \\
        \leq& \Hk - \gamma_y b_1 \eyk \|\dfy(\zk)\|^2 + \eta b_3 \|\dfy(\zkh)\|^2 - \eta \Big(b_2 - \frac{\sLh a_1^2}{2} \eta\Big)\|\dfx(\zkh)\|^2, 
    \end{aligned}
    \end{equation*}
    where the second inequality follows from \eqref{eq:search-dir-requi-x} and \eqref{eq:search-dir-ass-x} and the last inequality is due to \eqref{eq:linesearch-condition-y}. 
    Now, we investigate the relationship between $\|\dfy(\zkh)\|^2$ and $\|\dfy(\zk)\|^2$ utilizing triangular inequality and the $L_f$-Lipschitz continuity of $\df$ over $\cB_M(0)$:
    \begin{equation*}
    %\label{eq:prop-inter2}
        \begin{aligned}
        \|\dfy(\zkh)\| &=
        \|\dfy(\zkh) - \dfy(\zk) + \dfy(\zk)\| \\
        &\leq \sL \|\ykp - \yk\| +  \|\dfy(\zk)\| 
        = (\sL \eyk + 1) \|\dfy(\zk)\|. 
        \end{aligned}
    \end{equation*}
    Combining this inequality with our previous descent lemma-based estimation yields
    \begin{align*}
        \hb(\xk + \eta\dxk,\ykp) &\leq 
        \Hk - \eta \Big(b_2 - \frac{\sLh a_1^2}{2} \eta\Big)\|\dfx(\zkh)\|^2  \\ &\hspace{2cm} 
         -\left[\gamma_y b_1 \eyk-  b_3 (\sL \eyk + 1)^2 \eta\right] \|\dfy(\zk)\|^2.
    \end{align*}
    Then the line-search condition \eqref{eq:linesearch-condition-x} can be fulfilled for $\eta \in (0, \eta_x]$ satisfying
    \begin{equation}
    \label{eq:step-sizes-insight2}
        b_2 - \frac{\sLh a_1^2}{2} \eta \geq \gamma_x b_2 \qaq \gamma_y b_1 \eyk -b_3 (\sL \eyk + 1)^2 \eta \geq \gamma_x b_1 \eyk.
    \end{equation}
    Since $\gamma_x < \gamma_y$, $\eyk \in [\eyb, \eta_y]$ and $\eta \in (0, \eta_x]$, a sufficient condition for \eqref{eq:step-sizes-insight2} is
    \begin{equation*}
        %\label{eq:prop-inter4}
        \eta \leq \min\left\{ \frac{2(1-\gamma_x)b_2}{a_1^2 \sLh},  \frac{(\gamma_y-\gamma_x) b_1 \eyb}{b_3 (\sL\eta_y + 1)^2}\right\} := \eta'_x.
    \end{equation*}
    Hence, the backtracking for \eqref{eq:linesearch-condition-x} terminates finitely in iteration $k$ and the backtracking procedure would guarantee
    \begin{equation*}
        \exk \geq 
        \min\{\alpha \eta_x',\eta_x\}
        := \exb > 0.
    \end{equation*} 
    Now, observe that Lemma \ref{lem:properties-of-h}, \eqref{eq:linesearch-condition-x}, and \eqref{eq:ZH-weighted-fcn-value} imply 
    \[
    \bar \Phi \leq \hb(\zkp) \leq \Hk
    \qaq
    \hb(\zkp) \leq
    \Hkp = (1-\tk) \Hk + \tk \hb(\zkp) \leq H_k.
    \]
    Hence, by induction, part 1, 2, and 3 hold. For part 4, since $\{\Hk\}_k$ is bounded below by $\bar \Phi$ and nonincreasing, there exists $\bar H \geq \bar \Phi$ such that $H_k \to \bar H$. Moreover, note that
    \[
    0 \leq \Hk - \hb(\zkp) = \tk^{-1} (\Hk - \Hkp) \leq \tau^{-1} (\Hk - \Hkp) \to 0,
    \]
    which completes the proof.
\end{proof}

% \begin{rmk}
%     Since the line-search conditions \eqref{eq:linesearch-condition-y} and \eqref{eq:linesearch-condition-x} only require knowledge on the constants $c_2$ and $c_4$ in Assumption \ref{ass:search-directions}, which usually do not depend on the global Lipschitz constant $\sL$ of $\nabla f$ (see Example \ref{exp:update-dir-bds}), all of our preceding discussions remain to be valid if we only assume $f$ to be locally Lipschitz continuous. In this case, although the conditions \eqref{eq:search-dir-requi-y} and \eqref{eq:search-dir-requi-x} might only hold in a local neighborhood of $(\xk, \yk)$ and the constants $c_1, c_3, c_5$ might depend on the local Lipschitz constant of $\nabla f$ over this neighborhood, the line-search procedure can automatically adapt to the local Lipschitz constant and terminal finitely. (The proof is also the same, with all the constants replaced by the corresponding local versions.) We choose to work with globally $\sL$-smooth $f$ for the ease of presentation. 
% \end{rmk}

\begin{rmk}
    \label{rmk:bdd-iter}
    There are two important observations regarding Proposition \ref{prop:linesearch-terminates-finitely}.
    \begin{enumerate}
        \item By Proposition \ref{prop:linesearch-terminates-finitely}, the line-search scheme \eqref{eq:linesearch-condition-y}--\eqref{eq:ZH-weighted-fcn-value} guarantees \[(\xk, \yk) \in [\hb(x,y) \leq \hb(x_0, y_0)] =: \Omega_{0}, \quad \forall k \geq 0.\] Hence, Assumption \ref{ass:bounded-iterates} can be ensured if $\Omega_0$ is bounded. By Lemma \ref{lem:coercivity-of-h}, when $\beta > \mu^{-1}$, this can be implied by coercivity of the value function $\Phi$, or equivalently, the boundedness of the level set $[\Phi(x) \leq \hb(x_0, y_0)]$.
        \item Suppose $\dxk = -\dfx(\xk, \ykp)$ and $\beta = 2\mu^{-1}$. Then, by direct computation and the first inequality in Lemma \ref{lem:various-gradient-bounds1}, %\eqref{eq:search-dir-bd-gda-x}, 
        the constants in \eqref{eq:search-dir-ass-x} and \eqref{eq:search-dir-requi-x} are
        %begin{equation*}
            %\label{eq:param-gda-dx}
            $a_1 = a_2 = b_1 = 1, %\quad 
            b_2 = 1/2, %\qaq 
            \text{ and } b_3 = 2\kappa^2,$
        %\end{equation*}
        where $\kappa = \sL/\mu$. If $\eyk \sim 1/\sL$, the second inequality in \eqref{eq:step-sizes-insight2} and the backtracking imply that %for some constant $c > 0$
        \[
        \exk / \eyk \geq O(1/\kappa^2).
        \]
        This roughly matches the prescribed step sizes in two-timescale GDA method \cite[Theorem 17]{lin2025two}, but it is automatically determined by our line-search mechanism without explicit knowledge of $\sL$ (since our line-search scheme only requires $b_1 = 1, b_2 = 1/2$). Nevertheless, the nonmonotone line search mechanism can often allow more aggressive step sizes, providing a much faster practical convergence, compared to the standard two-timescale GDA with prescheduled step sizes. 
    \end{enumerate}
\end{rmk}

\noindent In the remainder of this subsection, we establish the global convergence of Algorithm \ref{alg:line-search-framework}. And we show that $\nabla f(\xk, \yk) \to 0$ at an $O(T^{-1/2})$ sublinear rate. 

\begin{theo}
    \label{thm:glob-conv-iter-compl}
    Suppose Assumption \ref{Assumption_1}, \ref{ass:search-directions}, and \ref{ass:bounded-iterates} hold and $\beta > \mu^{-1}$. Then the iterates $\{(\xk, \yk)\}_k$ generated by Algorithm \ref{alg:line-search-framework} satisfy 
    \[
    \nabla f(\xk, \yk) \to 0 \qaq \nabla \Phi(\xk) \to 0,
    \]
    as $k \to \infty$. %In particular, every accumulation point of $\{(\xk, \yk)\}_k$ must be a first-order minimax point of \eqref{Prob_Ori}. 
    Furthermore, it holds that
    \[
    \min_{k=0,1,\ldots,T} \|\nabla f(\xk, \yk)\| = O(T^{-1/2}) \qaq \min_{k=0,1,\ldots,T} \|\nabla \Phi(\xk)\| = O(T^{-1/2}).
    \]
\end{theo}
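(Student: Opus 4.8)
The plan is to use the Zhang--Hager averages $\{\Hk\}$ as a Lyapunov sequence: show they decrease at every iteration by an amount controlled by the squared gradient norms, sum this to obtain square-summability of $\{\|\df(\zk)\|\}$, and then read off both the asymptotic statement and the $O(T^{-1/2})$ rate. The only minimax-specific facts needed at the end are the identity $\nabla\Phi(\xk)=\dfx(\xk,\ys(\xk))$ from Lemma~\ref{lem:properties-of-value-fcn} and the strong-concavity bound $\|\yk-\ys(\xk)\|\le\mu^{-1}\|\dfy(\zk)\|$; the equivalence results of Section~\ref{subsec:opt-cond-equiv} play no role here.

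First I would substitute the acceptance inequality \eqref{eq:linesearch-condition-x} into the averaging update \eqref{eq:ZH-weighted-fcn-value}, which gives
\[
\Hkp=(1-\tk)\Hk+\tk\hb(\zkp)\le\Hk-\tk\gamma_x\bigl(b_1\eyk\|\dfy(\zk)\|^2+b_2\exk\|\dfx(\zkh)\|^2\bigr).
\]
Using $\tk\ge\tau$ together with the uniform step-size lower bounds $\eyk\ge\eyb$ and $\exk\ge\exb$ from Proposition~\ref{prop:linesearch-terminates-finitely}, this becomes $\Hk-\Hkp\ge c_0\bigl(\|\dfy(\zk)\|^2+\|\dfx(\zkh)\|^2\bigr)$ for a fixed constant $c_0>0$. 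Since $\Hk\ge\hb(\zk)\ge\bar\Phi$ by Proposition~\ref{prop:linesearch-terminates-finitely} and Lemma~\ref{lem:properties-of-h}, telescoping over $k=0,\dots,T$ yields $\sum_{k=0}^{T}\bigl(\|\dfy(\zk)\|^2+\|\dfx(\zkh)\|^2\bigr)\le(H_0-\bar\Phi)/c_0$, a bound independent of $T$; in particular $\|\dfy(\zk)\|\to0$ and $\|\dfx(\zkh)\|\to0$.

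Next I would transfer the $x$-gradient estimate from the half-step $\zkh=(\xk,\ykp)$ to $\zk=(\xk,\yk)$. Since $\|\ykp-\yk\|=\eyk\|\dfy(\zk)\|\le\eta_y\|\dfy(\zk)\|$ and $\df$ is $\sL$-Lipschitz on $\cB_M(0)$ (Assumption~\ref{ass:bounded-iterates}), we get $\|\dfx(\zk)\|\le\|\dfx(\zkh)\|+\sL\eta_y\|\dfy(\zk)\|\to0$, hence $\df(\zk)\to0$. For the value function, Lemma~\ref{lem:properties-of-value-fcn} gives $\nabla\Phi(\xk)=\dfx(\xk,\ys(\xk))$; as $\{\xk\}$ is bounded and $\ys$ is continuous, the points $(\xk,\ys(\xk))$ lie in a compact set $\mathcal{D}$ on which $\df$ is $\sLD$-Lipschitz, so $\|\nabla\Phi(\xk)-\dfx(\zk)\|\le\sLD\mu^{-1}\|\dfy(\zk)\|\to0$, and therefore $\nabla\Phi(\xk)\to0$. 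For the rate, the elementary bound $\|\df(\zk)\|^2\le2\|\dfx(\zkh)\|^2+(1+2\sL^2\eta_y^2)\|\dfy(\zk)\|^2$ (and an analogous one for $\|\nabla\Phi(\xk)\|^2$) combined with the telescoped sum shows that $\sum_{k=0}^{T}\|\df(\zk)\|^2$ and $\sum_{k=0}^{T}\|\nabla\Phi(\xk)\|^2$ are bounded independently of $T$; since the minimum over $k\le T$ is at most the average, this yields $\min_{0\le k\le T}\|\df(\zk)\|=O(T^{-1/2})$ and likewise for $\nabla\Phi$.

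I expect the main obstacle to be bookkeeping rather than any single estimate. A single telescoping inequality must simultaneously control $\|\dfy(\zk)\|$, measured at $\zk$, and $\|\dfx(\zkh)\|$, measured at the intermediate point $\zkh$ --- which is precisely why the $y$-term is carried along in \eqref{eq:linesearch-condition-x} with coefficient $\gamma_x b_1\eyk$ even though it was already consumed in \eqref{eq:linesearch-condition-y}. One must also be careful to use the step-size upper bound $\eyk\le\eta_y$ and the lower bounds $\exb,\eyb$ consistently when passing between $\df$ at $\zk$ and at $\zkh$, and to note that enlarging the Lipschitz region from $\cB_M(0)$ to the compact set $\mathcal{D}$ containing the maximizers $(\xk,\ys(\xk))$ is harmless.
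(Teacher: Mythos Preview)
Your proposal is correct and follows essentially the same route as the paper's proof: substitute \eqref{eq:linesearch-condition-x} into \eqref{eq:ZH-weighted-fcn-value}, use $\tk\ge\tau$ and the step-size lower bounds from Proposition~\ref{prop:linesearch-terminates-finitely} to telescope, transfer $\dfx$ from $\zkh$ to $\zk$ via the Lipschitz bound $\|\ykp-\yk\|\le\eta_y\|\dfy(\zk)\|$, and conclude the $\nabla\Phi$ statements from $\nabla\Phi(\xk)=\dfx(\xk,\ys(\xk))$ together with $\|\yk-\ys(\xk)\|\le\mu^{-1}\|\dfy(\zk)\|$. Your remark about enlarging the Lipschitz region to a compact set $\mathcal{D}$ containing the points $(\xk,\ys(\xk))$ is in fact slightly more careful than the paper, which tacitly reuses $\sL$ at that step.
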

\begin{proof}
    The line-search condition \eqref{eq:linesearch-condition-x} and update rule of $\Hk$ \eqref{eq:ZH-weighted-fcn-value} imply that
    \begin{equation*}
        \begin{aligned}
            \Hkp &= \Hk - \tk \left(\Hk - \hb(\zkp)\right) \\
            &\leq \Hk -  \tk\gamma_x\left(b_1\eyk \|\dfy(\zk)\|^2 + b_2\exk \|\dfx(\zkh)\|^2\right).
        \end{aligned}
    \end{equation*}
    Combining this inequality, Proposition \ref{prop:linesearch-terminates-finitely} part 3, and the fact that $\tk \geq \tau > 0$ gives
    \begin{align*}
        \Hkp %&\leq \Hk -  \frac{\tk\gamma_x}{2}[c_4\exk \|\dfx(\zkt)\|^2 + c_2\eyk \|\dfy(\zk)\|^2] \\
        &\leq \Hk - \tau\gamma_x\left(b_1\eyb \|\dfy(\zk)\|^2 + b_2\exb \|\dfx(\zkh)\|^2\right).
    \end{align*}
    Telescoping this inequality over $k = 0, 1, \ldots, T$ and rearranging the terms yield 
    \begin{equation*}
        %\label{eq:iter-comp-inter1}
        b_1\eyb {\sum}_{k=0}^T \|\dfy(\zk)\|^2 + b_2\exb {\sum}_{k=0}^T \|\dfx(\zkh)\|^2 
        \leq \frac{2(H_0 - H_{T+1})}{\tau \gamma_x} \leq \frac{2(H_0 - \bar H)}{\tau \gamma_x},
    \end{equation*}
    where the last inequality is due to part 1 and 4 of Proposition \ref{prop:linesearch-terminates-finitely}.
    Now, note that
    \begin{equation*}
    %\label{eq:iter-comp-inter2}
    \begin{aligned}
        \|\dfx(\zk)\|^2 
        &= \|\dfx(\zk) - \dfx(\zkh) + \dfx(\zkh)\|^2 \\ 
        &\leq 2\sL^2\|\yk - \ykp\|^2 + 2\|\dfx(\zkh)\|^2 \\
        &\leq 2(\sL\eta_y)^2 \|\dfy(\zk)\|^2 + 2\|\dfx(\zkh)\|^2, 
    \end{aligned}
    \end{equation*}
    where the first inequality follows from Young's inequality and Lipschitz continuity of $\df$ and the second inequality is due to $\eyk \leq \eta_y$. 
    Combining the previous two estimations yields
    \begin{align*}
        &\min\{b_1 \eyb, b_2 \exb\} {\sum}_{k=0}^T \|\nabla f(\zk)\|^2\\
        \leq& b_1\eyb{\sum}_{k=0}^T \|\dfy(\zk)\|^2 + b_2\exb {\sum}_{k=0}^T \|\dfx(\zk)\|^2 \\
        \leq&  
        \left[b_1\eyb + 2b_2\exb(\sL \eta_y)^2\right]{\sum}_{k=0}^T \|\dfy(\zk)\|^2 + 2 b_2\exb {\sum}_{k=0}^T \|\dfx(\zkh)\|^2\\
        \leq& \left[1 + \frac{b_2\exb(\sL \eta_y)^2}{b_1 \eyb}\right] \frac{4(H_0 - \bar H)}{\tau \gamma_x}, \quad \forall T \geq 0,
    \end{align*}
    which implies $\nabla f(\zk) \to 0$ and its global convergence rate. 
    Moreover, the global convergence properties in terms of $\|\nabla \Phi(\xk)\|$ can be inferred from those of $\|\nabla f(\zk)\|$ by observing that
    \begin{align*}
        \|\nabla \Phi(\xk)\| &= \|\dfx(\xk,\ys(\xk))\| = \|\dfx(\xk,\ys(\xk)) - \dfx(\zk) + \dfx(\zk)\|\\
        &\leq \sL\|\ys(\xk) - \yk\| + \|\dfx(\zk)\| \leq 
        {\sL}{\mu^{-1}} \|\dfy(\zk)\| + \|\dfx(\zk)\|,
    \end{align*}
    where the first inequality follows from the triangular inequality and Lipschitz continuity of $\df$ and the second inequality is due to $\mu$-strongly concavity of $f(\xk,\cdot)$
\end{proof}

\subsection{Iterate Convergence and Local Rates under the {\L}ojasiewicz Property} \label{sec:KL-iterate-convergence}
In this subsection, we prove the asymptotic convergence of the full iterate sequence $\{(\xk, \yk)\}_k$ as well as the sequence's local convergence rates for Algorithm \ref{alg:line-search-framework} under the KL property (see Definition \ref{def:Loj-inequality}). To start with, let us first establish the equivalence between the KL property of the value function $\Phi$ and the KL property of the Lyapunov function $\hb$.

\begin{theo}
\label{thm:KL-equivalence}
Suppose Assumption \ref{Assumption_1} holds. Take any $\beta > 0$ and $\theta \in \big[\frac12,1\big)$.
\begin{enumerate}
    \item If the function $\hb$ satisfies the KL property at $(\bar x, \ys(\bar x))$ with exponent $\theta$, then the value function $\Phi$ satisfies the KL property at $\bar x$ with the same exponent $\theta$.
    \item Conversely, suppose the value function $\Phi$ satisfies the KL property at $\bar x$ with exponent $\theta$. Then $\hb$ satisfies the quasi-KL property: there exists $C_{\hb} > 0$ and a neighborhood  $U(\bar x,\ys (\bar x))$ around $(\bar x,\ys (\bar x))$ such that
    \begin{equation} \label{eq:quasi-Loj}
        |\hb(x,y) - \hb(\bar x, \ys(\bar x))|^\theta \leq C_{\hb} \|\nabla f(x,y)\|, \quad \forall (x,y) \in U(\bar x,\ys (\bar x)).
    \end{equation}
    If additionally $\beta > \mu^{-1}$, then $\hb$ also satisfies the KL inequality at $(\bar x,\ys (\bar x))$ with the same exponent $\theta$.
\end{enumerate}
\end{theo}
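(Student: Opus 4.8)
The plan rests on two elementary observations, which I would record first. Along the graph of $\ys$ one has $\dfy(x,\ys(x))=0$, so $\hb(x,\ys(x))=\Phi(x)$ and, by Lemma~\ref{lem:properties-of-h}, $\dhb(x,\ys(x))=(\nabla\Phi(x),0)$; in particular $\|\dhb(x,\ys(x))\|=\|\nabla\Phi(x)\|$. Second, $\mu$-strong concavity of $f(x,\cdot)$ gives the two ``distance-to-the-graph'' estimates $\|y-\ys(x)\|\le\mu^{-1}\|\dfy(x,y)\|$ and $0\le f(x,\ys(x))-f(x,y)\le(2\mu)^{-1}\|\dfy(x,y)\|^{2}$. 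Since every assertion is local, I would fix once and for all a compact ball $\D$ around $(\bar x,\ys(\bar x))$ on which $\df$ is $\sLD$-Lipschitz and $\|\dfy\|\le G$ for some $G$, and shrink all neighborhoods into $\D$, invoking continuity of $\ys$ (Lemma~\ref{lem:properties-of-value-fcn}) whenever I need $(x,y)$ near $(\bar x,\ys(\bar x))$ to force $x$ into a prescribed $x$-neighborhood.

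For part~1, suppose $\hb$ satisfies the KL inequality with exponent $\theta$ on a neighborhood $U$ of $(\bar x,\ys(\bar x))$. For $x$ close enough to $\bar x$ the point $(x,\ys(x))$ lies in $U$; substituting it into the KL inequality and using the first observation yields $\|\nabla\Phi(x)\|=\|\dhb(x,\ys(x))\|\ge C\,|\hb(x,\ys(x))-\hb(\bar x,\ys(\bar x))|^{\theta}=C\,|\Phi(x)-\Phi(\bar x)|^{\theta}$, which is precisely the KL inequality for $\Phi$ at $\bar x$ with the same exponent. (No lower bound on $\beta$ enters here.)

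For part~2, assume $\Phi$ satisfies the KL inequality near $\bar x$. Using $\hb(\bar x,\ys(\bar x))=\Phi(\bar x)$, I split $\hb(x,y)-\hb(\bar x,\ys(\bar x))=\big(\hb(x,y)-\Phi(x)\big)+\big(\Phi(x)-\Phi(\bar x)\big)$, apply the triangle inequality and then subadditivity of $t\mapsto t^{\theta}$ on $[0,\infty)$ (valid as $\theta\in[\frac12,1)$), and estimate the two resulting $\theta$-th powers separately. By the definition of $\hb$ and the second observation, $|\hb(x,y)-\Phi(x)|$ is $O(\|\dfy(x,y)\|^{2})$; since $2\theta\ge1$ and $\|\dfy\|\le G$ on $\D$, this gives $|\hb(x,y)-\Phi(x)|^{\theta}\le c_1\|\dfy(x,y)\|\le c_1\|\df(x,y)\|$ for a constant $c_1$. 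For the second term, KL for $\Phi$ gives $|\Phi(x)-\Phi(\bar x)|^{\theta}\le C_\Phi^{-1}\|\nabla\Phi(x)\|=C_\Phi^{-1}\|\dfx(x,\ys(x))\|$, and the triangle inequality combined with $\sLD$-Lipschitzness of $\df$ and $\|\ys(x)-y\|\le\mu^{-1}\|\dfy(x,y)\|$ bounds $\|\dfx(x,\ys(x))\|$ by $(1+\sLD\mu^{-1})\|\df(x,y)\|$. Summing the two bounds delivers the quasi-KL inequality~\eqref{eq:quasi-Loj}.

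Finally, under the additional hypothesis $\beta>\mu^{-1}$, I would promote~\eqref{eq:quasi-Loj} to a genuine KL inequality for $\hb$ by proving $\|\df(x,y)\|\le c\,\|\dhb(x,y)\|$ on a neighborhood of $(\bar x,\ys(\bar x))$ and substituting it into the right-hand side of~\eqref{eq:quasi-Loj}. This reverse gradient comparison is immediate from the lower bounds already available: the second inequality of Lemma~\ref{lem:various-gradient-bounds2} gives $\|\dfy\|\le(\beta\mu-1)^{-1}\|\dhy\|$, while the second inequality of Lemma~\ref{lem:various-gradient-bounds1} with $C=2$ gives $\|\dfx\|^{2}\le2\|\dhx\|^{2}+2\beta^{2}\sLD^{2}\|\dfy\|^{2}$, and combining the two shows $\|\df\|^{2}$ is at most a constant multiple of $\|\dhb\|^{2}$. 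I do not expect a genuine obstacle here --- the whole argument is constant bookkeeping over the fixed compact set $\D$. The only delicate point is precisely this last step: the direct argument only ever produces the quasi-KL form, and converting $\|\df\|$ into $\|\dhb\|$ is exactly where $\beta>\mu^{-1}$ is needed, since it makes $I_m+\beta\ddfyy(x,y)$ uniformly negative definite, hence boundedly invertible, throughout $\D$.
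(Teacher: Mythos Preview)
Your proposal is correct and follows essentially the same route as the paper's proof: part~1 by restricting the KL inequality for $\hb$ to the graph of $\ys$, part~2 by the same split $\hb(x,y)-\hb(\bar x,\ys(\bar x))=(\hb(x,y)-\Phi(x))+(\Phi(x)-\Phi(\bar x))$ together with subadditivity of $t\mapsto t^\theta$, strong concavity, and the Lipschitz bound on $\dfx$, and the final upgrade via the reverse gradient comparison from Lemmas~\ref{lem:various-gradient-bounds1}--\ref{lem:various-gradient-bounds2}. The only cosmetic differences are that the paper applies subadditivity once more to separate $|f(x,y)-\Phi(x)|^\theta$ from $(\beta/2)^\theta\|\dfy\|^{2\theta}$ (whereas you bound $|\hb(x,y)-\Phi(x)|$ directly as $O(\|\dfy\|^2)$), and in the last step the paper picks $C=1+\tfrac{(\beta\mu-1)^2}{2\beta^2\sLD^2}$ in Lemma~\ref{lem:various-gradient-bounds1} to make the cross term cancel, while your choice $C=2$ followed by substituting the $\|\dfy\|$ bound from Lemma~\ref{lem:various-gradient-bounds2} achieves the same end.
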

\begin{proof}
Suppose $\hb$ satisfies the KL property at $(\bar x, \ys(\bar x))$. There exists $C_{1}, \varepsilon_1 > 0$ such that
\begin{equation}
\label{eq:h-beta-Loj}
    C_{1} \|\nabla \hb(x,y)\| \geq  |\hb(x,y) - \hb(\bar x, \ys(\bar x))|^\theta, \quad \forall (x,y) \in \cB_{\varepsilon_1}(\bar x, \ys(\bar x)).
\end{equation}
Note that for each $x \in \Rn$,
\begin{equation}
\label{eq:Phi-h-beta-relation}
    \|\nabla \hb(x, \ys(x))\| = \|\nabla_x f(x, \ys(x))\|= \|\nabla \Phi(x)\| \quad \text{and} \quad \hb(x,\ys(x)) = \Phi(x).
\end{equation}
By Lemma \ref{lem:properties-of-value-fcn}, $\ys$ is locally Lipschitz continuous. Hence, there exists $\varepsilon_1' > 0$ s.t. 
\begin{align*}
    x \in \cB_{\varepsilon_1'}(\bar x) &\implies (x, \ys(x)) \in \cB_{\varepsilon_1}(\bar x, \ys(\bar x))\\
    & \overset{\eqref{eq:h-beta-Loj}}{\implies}  C_{1}\|\nabla \hb(x,\ys(x))\| \geq  |\hb(x,\ys(x)) - \hb(\bar x, \ys(\bar x))|^\theta\\
    & \overset{\eqref{eq:Phi-h-beta-relation}}{\implies} C_{1} \|\nabla \Phi(x)\| \geq  |\Phi(x) - \Phi(\bar x)|^\theta,
\end{align*}
%where the second line follows from \eqref{eq:h-beta-Loj} and the third line from \eqref{eq:Phi-h-beta-relation}. 
which proves part 1.

Conversely, suppose $\Phi$ satisfies the KL property at $\bar x$. There exists $C_2, \varepsilon_2 > 0$ such that 
\begin{equation} \label{eq:value-function-Loj}
    C_2  \|\nabla \Phi(x)\| \geq |\Phi(x) - \Phi(\bar x))|^\theta, \quad \forall x \in \cB_{\varepsilon_2}(\bar x).
\end{equation}
Choose $\varepsilon_2' \in (0, \varepsilon_2)$ sufficiently small s.t. $\|\nabla_y f(x,y)\| \leq 1$ for all $(x,y) \in \cB_{\varepsilon_2'}(\bar x, \ys (\bar x))$ and let $\bar{\mathsf{L}}_f$ be the Lipschitz modulus of $\nabla_x f$ over $\cB_{\varepsilon_2'}(\bar x, \ys (\bar x))$.
By the subadditivity $(u+v)^\theta \leq u^\theta + v^\theta$ for $u,v \geq 0$ and $\theta \in [\frac{1}{2},1)$, it holds that for any $(x,y) \in \cB_{\varepsilon_2'}(\bar x, \ys (\bar x))$,
\begin{align*} %\label{eq:intermediate-h-beta-Phi}
&|\hb(x,y) - \hb(\bar x, \ys(\bar x))|^\theta\\
\leq&
|\hb(x,y) - \hb(x, \ys(x))|^\theta + |\hb(x,\ys(x)) - \hb(\bar x, \ys(\bar x))|^\theta \nonumber\\
\leq&  |f(x,y) - f(x, \ys(x))|^\theta + \beta^{\theta} 2^{-\theta} \|\nabla_y f(x, y)\|^{2\theta} + C_2 \|\nabla\Phi(x)\|\nonumber\\
\leq& \underbrace{[(2\mu)^{-\theta} + \beta^{\theta} 2^{-\theta}]}_{=:C_3} \|\nabla_y f(x,y)\|^{2\theta} + C_2 \|\nabla_x f(x, \ys(x))\|\\
\leq& C_3 \|\nabla_y f(x,y)\| + C_2 \|\nabla_x f(x, \ys(x)) - \nabla_x f(x, y) \| + C_2 \|\nabla_x f(x, y)\|\\
\leq& C_3 \|\nabla_y f(x,y)\| + C_2 \bar{\mathsf{L}}_f \|\ys(x) - y\| + C_2 \|\nabla_x f(x, y)\|\\
\leq& C_3 \|\nabla_y f(x,y)\| + C_2 \bar{\mathsf{L}}_f \mu^{-1} \|\nabla_y f(x, y)\| + C_2 \|\nabla_x f(x, y)\|,
\end{align*}
where the second inequality follows from the subadditivity and \eqref{eq:value-function-Loj}, the third inequality is due to strong concavity of $f(x,\cdot)$ and \eqref{eq:Phi-h-beta-relation}, the fourth inequality follows from $\|\nabla_y f(x,y)\| \leq 1$ and $\theta \geq 1/2$, the fifth inequality is due to the local Lipschitz continuity of $\nabla f$, and the last inequality follows from the strong concavity of $f(x, \cdot)$. This establishes the quasi-KL property \eqref{eq:quasi-Loj} with $U(\bar x, \ys(\bar x)) =  \cB_{\varepsilon_2'}(\bar x, \ys (\bar x))$. 

Now, suppose $\beta > \mu^{-1}$. To complete the proof of part 2, by the second inequality in Lemma \ref{lem:various-gradient-bounds2} %\eqref{eq:lower-bd-gradient-hy} 
and the second inequality in Lemma \ref{lem:various-gradient-bounds1} with $C := 1 + \frac{(\beta\mu -1)^2}{2\beta^2  \bar{\mathsf{L}}_f^2} > 1$, it holds that for every  $(x, y) \in \cB_{\varepsilon_2'}(\bar x, \ys(\bar x))$
\begin{align}
    \|\nabla \hb(x,y)\|^2 &= \|\nabla_x \hb(x,y)\|^2 + \|\nabla_y \hb(x,y)\|^2 \nonumber\\
    &\geq (1-C^{-1}) \|\nabla_x f(x,y)\|^2 + \frac{(\beta\mu-1)^2}{2} \|\nabla_y f(x,y)\|^2. \nonumber \\
    &\geq \min\bigg\{1-C^{-1}, \frac{(\beta\mu-1)^2}{2}\bigg\} \cdot \|\df(x,y)\|^2. \nonumber
\end{align}
Combining this inequality and the quasi-KL property \eqref{eq:quasi-Loj} completes the proof.
\end{proof}

\begin{rmk}
\label{rmk:KL for f}
Analogously, it can be shown that $\Phi$ satisfies the KL property at $\bar x$ with exponent $\theta$ if and only if $f$ satisfies the KL property at $(\bar x, \ys(\bar x))$ with the same exponent $\theta$.
\end{rmk}

The previous theorem motivates us to make the following assumption.
\begin{assumpt}%[\bf \Loj property on $\Phi$]
    \label{ass:value-function-loj}
    The value function $\Phi(x) = \max_y f(x,y)$ satisfies the KL property at each point in $\crit(\Phi):=\{x\in\Rn:  \nabla\Phi(x) = 0\}$ with exponent in $[1/2, 1)$.
\end{assumpt}

Under this assumption and by Theorem \ref{thm:equivalence_FOSP}, if $\beta > \mu^{-1}$, it holds that the Lyapunov function $\hb$ also satisfies the KL property on $\crit(\hb) = \{(x,y) \in \Rn\times\Rm: \dhb(x,y) = 0\}$. This allows us to establish the asymptotic iterate convergence and local convergence rates for Algorithm \ref{alg:line-search-framework}.

%\mbh{Remove the extra condition.}

\begin{theo}
    \label{thm:iter-conv-local-rates}
    Suppose Assumption \ref{Assumption_1}, \ref{ass:search-directions}, \ref{ass:bounded-iterates}, and \ref{ass:value-function-loj} hold and $\beta > \mu^{-1}$. %Furthermore, let $\{(\xk, \yk)\}_k$ be the iterates generated by Algorithm \ref{alg:line-search-framework} with search directions satisfying the additional condition
    % \begin{equation}
    %     \label{eq:KL-addtional-req}
    %     \|\dxk\|^2 \geq c_6\|\dfx(\xk,\ykp)\|^2 - c_7\|\dfy(\xk,\yk)\|^2, \quad \forall k \geq 0,
    % \end{equation}
    % where $c_6 > 0$ and $c_7 \geq 0$.
    Then the iterates $\{(\xk, \yk)\}_k$ generated by Algorithm \ref{alg:line-search-framework} converge to a first-order minimax point $(x^*, y^*)$ of \eqref{Prob_Ori}. Furthermore, it holds that
    \[\|(\xk, \yk) - (x^*, y^*)\|=\begin{cases} 
    O(q^{k}), &\text{if } \theta = 1/2;\\
    O(k^{\frac{1-\theta}{2\theta - 1}}), &\text{if } \theta \in (1/2, 1),
    \end{cases}\]
    where $q \in (0, 1)$ and $\theta$ is the KL exponent of $\hb$ at $(x^*, y^*)$ (or the KL exponent of $\Phi$ at $x^*$).
\end{theo}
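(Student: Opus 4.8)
The plan is to run the standard Kurdyka--\Loj{} machinery for descent methods---sufficient decrease, a step/gradient bound, and a continuity argument---but applied to the Zhang--Hager merit sequence $\{H_k\}$ rather than to $\{\hb(\zk)\}$, which need not be monotone. Throughout, write $\delta_k := \|\dfy(\zk)\| + \|\dfx(\zkh)\|$ and $E_k := H_k - \bar H \geq 0$, and work on a ball $\cB_{M'}(0) \supseteq \{\zk,\zkh : k \geq 0\}$ on which $\df$ and $\dhb$ are Lipschitz, available by Assumption \ref{ass:bounded-iterates}.

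First I would assemble three estimates. \emph{(a) Sufficient decrease:} feeding the step-size lower bounds $\exk \geq \exb$, $\eyk \geq \eyb$ and $\tk \geq \tau$ (Proposition \ref{prop:linesearch-terminates-finitely}) into the line-search inequality \eqref{eq:linesearch-condition-x} and the averaging rule \eqref{eq:ZH-weighted-fcn-value} gives $E_k - E_{k+1} = \tk(H_k - \hb(\zkp)) \geq c_1 \delta_k^2$ for some $c_1 > 0$ (after $(a+b)^2 \leq 2a^2 + 2b^2$); telescoping yields $\sum_k \delta_k^2 < \infty$, so $\delta_k \to 0$. \emph{(b) Step bound:} $\|\zkp - \zk\| \leq \exk\|\dxk\| + \eyk\|\dfy(\zk)\| \leq c_2 \delta_k$, directly from the updates and Assumption \ref{ass:search-directions}. \emph{(c) Gradient bound:} Lemma \ref{lem:various-gradient-bounds0} bounds $\|\dhb(\zkp)\|$ by a multiple of $\|\df(\zkp)\|$, and the triangle-inequality/Lipschitz estimates already used in the proof of Proposition \ref{prop:linesearch-terminates-finitely} bound $\|\dfy(\zkp)\|$ and $\|\dfx(\zkp)\|$ by multiples of $\|\dfy(\zk)\|$ and $\|\dfx(\zkh)\|$, so $\|\dhb(\zkp)\| \leq c_3 \delta_k$. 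Since $\delta_k \to 0$, we obtain $\df(\zk) \to 0$ and $\dhb(\zk) \to 0$; together with boundedness of $\{\zk\}$ and $\|\zkp - \zk\| \to 0$, the limit-point set $\omega$ of $\{\zk\}$ is nonempty, compact, connected, lies in $\crit\hb$, and $\hb \equiv \bar H$ on $\omega$ (using $\hb(\zk) \to \bar H$ from Proposition \ref{prop:linesearch-terminates-finitely}).

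Next, finite length. By Theorem \ref{thm:equivalence_FOSP}, $\crit\hb = \{(x^*,\ys(x^*)) : \nabla\Phi(x^*) = 0\}$, so Assumption \ref{ass:value-function-loj} together with Theorem \ref{thm:KL-equivalence}(2) (where $\beta > \mu^{-1}$ is needed) shows $\hb$ has the KL property at every point of $\omega$ with exponent in $[\tfrac12,1)$; the uniformized KL lemma \cite[Lemma~6]{BolSabTeb14} then furnishes a concave desingularizing function $\phi$ and $\epsilon,\nu > 0$ with $\phi'(\hb(z) - \bar H)\,\|\dhb(z)\| \geq 1$ whenever $\mathrm{dist}(z,\omega) < \epsilon$ and $\bar H < \hb(z) < \bar H + \nu$, and for all large $k$ both $\zk,\zkp$ lie in this region. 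The one genuinely new ingredient---forced by nonmonotonicity---is the split $E_k = (H_k - \hb(\zkp)) + (\hb(\zkp) - \bar H)$ with $H_k - \hb(\zkp) = (E_k - E_{k+1})/\tk \leq (E_k - E_{k+1})/\tau$. Call $k$ \emph{bad} if $H_k - \hb(\zkp) \geq \tfrac12 E_k$: then $E_{k+1} \leq (1-\tfrac\tau2)E_k$, so $E$ decays geometrically along the subsequence of bad indices and, by (a)--(b), $\|\zkp-\zk\| \leq c_2\sqrt{E_k/c_1}$ makes $\sum_{k\ \mathrm{bad}}\|\zkp-\zk\|$ a convergent geometric-type tail. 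Call $k$ \emph{good} otherwise: then $\hb(\zkp) - \bar H \geq \tfrac12 E_k > 0$, so the uniformized KL inequality gives $\|\dhb(\zkp)\| \geq \tilde c\,(E_k/2)^{\theta}$, and combining this with (c), (a), and concavity of $\phi$ in the usual way yields $\|\zkp - \zk\| \leq C\,(\phi(E_k) - \phi(E_{k+1}))$ for good $k$, whose right-hand side telescopes. Adding the two sums gives $\sum_k\|\zkp-\zk\| < \infty$, so $\{\zk\}$ is Cauchy and converges to some $z^* = (x^*,y^*) \in \crit\hb$, which by Theorem \ref{thm:equivalence_FOSP} is a first-order minimax point of \eqref{Prob_Ori}.

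Finally, the rates. Now $\omega = \{z^*\}$, so we may take $\phi(s) = c\,s^{1-\theta}$ with $\theta$ the KL exponent of $\hb$ at $z^*$, equal to the KL exponent of $\Phi$ at $x^*$ by Theorem \ref{thm:KL-equivalence}. Re-examining the two cases with this $\phi$ shows $E_k - E_{k+1} \geq a\,E_k^{2\theta}$ for all large $k$ (in the bad case using that eventually $E_k \leq 1$ and $2\theta \geq 1$); the classical scalar-recursion lemma (see, e.g., \cite{AttBol09}) then gives $E_k = O(q^k)$ for some $q \in (0,1)$ when $\theta = \tfrac12$ and $E_k = O(k^{-1/(2\theta-1)})$ when $\theta \in (\tfrac12,1)$. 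Summing the good-index bound $\|\zkp-\zk\| \leq C(\phi(E_k)-\phi(E_{k+1}))$ together with the geometric bad-index tail from index $k$ on, and using $1-\theta \leq \tfrac12$ to dominate the $\sqrt{E_\cdot}$ terms by $E_\cdot^{1-\theta}$, gives $\|\zk - z^*\| \leq \sum_{j\geq k}\|z_{j+1}-z_j\| \leq C' E_k^{1-\theta}$; substituting the rate for $E_k$ produces the claimed bounds. The crux of the whole argument is exactly this nonmonotonicity bookkeeping---controlling the gap between the decreasing merit value $H_k$ and the objective value $\hb(\zk)$ to which the KL inequality applies---which the good/bad dichotomy resolves; steps (a)--(c) and the remainder are routine applications of the preliminary lemmas.
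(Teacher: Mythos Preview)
Your argument is correct and self-contained, but it takes a different route from the paper. The paper's proof is much shorter because it outsources the entire nonmonotone-KL analysis to the abstract framework of \cite{qian2025convergence}: after noting (via Theorem \ref{thm:KL-equivalence}) that $\hb$ inherits the KL property at the limit point, it simply verifies two conditions of that framework---a sufficient-decrease condition $\hb(\zkp)-\Hk \le -c_1\|\zkp-\zk\|^2$ (essentially your (a) combined with (b)), and a relative-error condition $\|\df(\zkp)\| \le c_2\|\zkp-\zk\|$ (your (c) combined with a lower bound on $\|\zkp-\zk\|$ in terms of $\delta_k$, obtained from $\|\dxk\|\ge a_2\|\dfx(\zkh)\|$)---and then invokes \cite[Theorems 3.3 and 3.6]{qian2025convergence} directly. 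Your proposal, by contrast, reproves the nonmonotone-KL machinery from scratch: the good/bad dichotomy on whether $\Hk-\hb(\zkp)\ge \tfrac12 E_k$ is precisely the device that bridges the gap between the decreasing merit $\Hk$ and the non-monotone $\hb(\zk)$, and is the substantive idea that \cite{qian2025convergence} encapsulates. Your approach buys transparency and independence from the external reference, while the paper's buys brevity and avoids re-deriving standard (if recent) machinery; the underlying estimates (a)--(c) are essentially the same in both.
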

\begin{proof}
By Assumption \ref{ass:bounded-iterates}, the sequence $\{\zk\}_k$ must have a limit point $z^* := (x^*, y^*)$. Then, by Theorem \ref{thm:glob-conv-iter-compl} and  part 4 of Proposition \ref{prop:linesearch-terminates-finitely}, $z^*$ must be a first-order minimax point of \eqref{Prob_Ori} and 
\[
\lim_{k\to\infty} \hb(\zk) = \hb(z^*).
\]
Moreover, by Assumption \ref{ass:value-function-loj} and Theorem \ref{thm:KL-equivalence}, $h_\beta$ satisfies the KL property at $z^*$. 

To show $\zk \to z^*$ and the local convergence rates, by \cite[Theorem 3.3 and 3.6]{qian2025convergence}, it suffices to verify \cite[Condition H1 and H2]{qian2025convergence}. 
The update rule of Algorithm \ref{alg:line-search-framework} yields
\begin{equation}
    \label{eq:KL-inter1}
    \begin{aligned}
    \|\zkp - \zk\|^2 
    &= \eta_{y,k}^2 \|\dfy(\zk)\|^2 + \eta_{x,k}^2 \|\dxk\|^2 \\
    &\leq \eta_y^2 \|\dfy(\zk)\|^2 + \eta_x^2 a_1^2 \|\dfx(\zkh)\|^2, 
\end{aligned}
\end{equation}
where the inequality is due to \eqref{eq:search-dir-ass-x}, $\eyk \leq \eta_y$, and $ \exk \leq \eta_x$.  
Then, letting $c_1 := \frac{\tau \gamma_x \min\{b_1 \eyb,b_2 \exb\}}{\max\{\eta_y^2, \eta_x^2 a_1^2\}}$, our line-search condition \eqref{eq:linesearch-condition-x} implies
\begin{align*}
    \hb(\zkp) - \Hk &\leq -\tk\gamma_x[b_1\eyk \|\dfy(\zk)\|^2 + b_2\exk \|\dfx(\zkh)\|^2]\\
    & \leq -\tau \gamma_x[b_1\eyb \|\dfy(\zk)\|^2 + b_2\exb \|\dfx(\zkh)\|^2]\\
    & \leq -c_1 \|\zkp - \zk\|^2,
\end{align*}
where the second inequality is due to $\tk \geq \tau$ and part 3 of Proposition \ref{prop:linesearch-terminates-finitely} and the last inequality is due to \eqref{eq:KL-inter1}. This inequality and \eqref{eq:ZH-weighted-fcn-value} verifies \cite[Condition H1]{qian2025convergence}. 

To verify \cite[Condition H2]{qian2025convergence}, by %\eqref{eq:upper-bd-dhy} and \eqref{eq:upper-bd-dhx},  Lemma \ref{lem:various-gradient-bounds0} and the first inequality of Lemma \ref{lem:various-gradient-bounds1}.
Lemma \ref{lem:various-gradient-bounds0}, it suffices to find $c_2 > 0$ such that 
\[
\|\nabla f(\zkp)\| \leq c_2 \|\zkp - \zk\|.
\]
By \eqref{eq:search-dir-ass-x} and the Cauchy-Schwarz inequality, it holds that
\begin{align*}
    a_2 \|\dfx(\zkh)\|^2 \leq - \inner{\dxk, \dfx(\zkh)} \leq \|\dxk\| \|\dfx(\zkh)\|,
    %\label{eq:KL-inter1.5}
\end{align*}
which implies that $\|\dxk\| \geq a_2 \|\dfx(\zkh)\|$. Invoking this inequality, Proposition \ref{prop:linesearch-terminates-finitely} part 3, and the update rule of Algorithm \ref{alg:line-search-framework} gives 
\begin{equation} 
    \label{eq:KL-inter2}
    \|\zkp - \zk\|^2 
    = \eta_{x,k}^2 \|\dxk\|^2 + \eta_{y,k}^2 \|\dfy(\zk)\|^2
    \geq \exb^2 a_2^2 \|\dfx(\zkh)\|^2 + \eyb^2 \|\dfy(\zk)\|^2.
\end{equation}
Then, by Young's inequality and (local) $\sL$-smoothness of $f$, it holds that
\begin{align*}
    &\|\df(\zkp)\|^2
    = \|\df(\zkp) - \df(\zk) + \df(\zk)\|^2\\
    \leq& 2\sL^2 \|\zkp - \zk\|^2 + 2\|\dfx(\zk)\|^2 + 2\|\dfy(\zk)\|^2\\
    =& 2\sL^2 \|\zkp - \zk\|^2 + 2\|\dfx(\zk) - \dfx(\zkh) + \dfx(\zkh)\|^2 + 2\|\dfy(\zk)\|^2\\
    \leq& 2\sL^2 \|\zkp - \zk\|^2 + 4\sL^2\|\ykp - \yk\|^2 + 4\|\dfx(\zkh)\|^2 + 2\|\dfy(\zk)\|^2\\
    \leq& \bigg(6\sL^2 + \frac{4}{\min\{\exb^2 a_2^2, \eyb^2\}}\bigg) \|\zkp - \zk\|^2,
\end{align*}
where the last line follows from \eqref{eq:KL-inter2}.
Taking $c_2 := \sqrt{6\sL^2 + \frac{4}{\min\{\exb^2 a_2^2, \eyb^2\}}}$ completes the proof.
\end{proof}

\section{Parameter-free GDA with Adaptive Step Sizes and Nonmonotone Line-search} \label{sec:applications}

Our previous analysis are all based on the general Assumption \ref{ass:search-directions} on the search direction $\dxk$. In this section, we will focus on a detailed construction of $\dxk$ that follows the (alternating) GDA method, that is, $\dxk = -\dfx(\xk,\ykp)$. In particular, the previous design and analysis of Algorithm \ref{alg:line-search-framework} are based on the prior knowledge of the strong concavity modulus to determine $\beta$. In this section, we will also design a fully parameter-free GDA method that allows us to progressively estimate and update $\beta$. Combined with the adaptive BB-type (Barzilai-Borwein) step sizes \cite{barzilai1988two} and the ZH-type nonmonotone line-search, we can obtain an efficient realization of our line-search framework. 

\subsection{Adaptive BB Step Sizes}
\label{subsec:BB}
The BB-type step \cite{barzilai1988two}, associated with the gradient descent direction, is an efficient adaptive step size rule that attempts to utilize a minimization problem's local curvature information without computing Hessian.
When generalized to the minimax problem \eqref{Prob_Ori},  the BB-type step size will be associated with the (alternating) GDA update where the search direction is $\dxk = -\dfx(\xk,\ykp)$. For $k \geq 1$, we define 
\begin{equation*}
    \begin{aligned}
        u_{y,k} &:= \yk - \ykm, &\quad v_{y, k} &:= \nabla_y f(\xk, \yk) - \nabla_y f(\xkm,\ykm),\\
         u_{x,k} &:= \xk - \xkm, &\quad  v_{x, k} &:= \nabla_x f(\xk, \ykp) - \nabla_x f(\xkm, \yk).
    \end{aligned}
\end{equation*}
Then, in this scheme, the BB-type initial step sizes $\eta_y$ and $\eta_x$ in each iteration $k$ for the nonmonotone line-search conditions \eqref{eq:linesearch-condition-y} and \eqref{eq:linesearch-condition-x} would be replaced by 
\begin{equation}
\label{eq:BB-step-size}
\begin{aligned}
\eykbb &= \min\Big\{ \eta_{y}^{\max}, \max\Big\{\frac{\|u_{y,k}\|^2}{|\langle u_{y,k}, v_{y,k} \rangle|}, \eta_y^{\min} \Big\}\Big\},\\
\exkbb &= \min\Big\{ \eta_{x}^{\max}, \max\Big\{ \frac{\|u_{x,k}\|^2}{|\langle u_{x,k}, v_{x,k} \rangle|}, \eta_x^{\min} \Big\}\Big\},
% \eykbb &= \min\Bigg\{ \eta_{y}^{\max}, \max\bigg\{\frac{\|\yk - \ykm\|^2}{|\langle \yk - \ykm, \nabla_y f(\xk, \yk) - \nabla_y f(\xkm,\ykm) \rangle|}, \eta_y^{\min} \bigg\}\Bigg\},\\
% \exkbb &= \min\Bigg\{ \eta_{x}^{\max}, \max\bigg\{ \frac{\|\xk - \xkm\|^2}{|\langle \xk - \xkm, \nabla_x f(\xk, \ykp) - \nabla_x f(\xkm, \yk) \rangle|}, \eta_x^{\min} \bigg\}\Bigg\},\\
\end{aligned}
\end{equation}
or
\begin{equation}
\label{eq:BB-step-size2}
\begin{aligned}
\eykbbt &= \min\Big\{ \eta_{y}^{\max}, \max\Big\{\frac{|\langle u_{y,k}, v_{y,k} \rangle|}{\|v_{y,k}\|^2}, \eta_y^{\min} \Big\}\Big\},\\
\exkbbt &= \min\Big\{ \eta_{x}^{\max}, \max\Big\{ \frac{|\langle u_{x,k}, v_{x,k} \rangle|}{\|v_{x,k}\|^2}, \eta_x^{\min} \Big\}\Big\},
\end{aligned}
\end{equation}
where $\eta_{y}^{\max} > \eta_y^{\min} > 0$ and $\eta_{x}^{\max} > \eta_x^{\min} > 0$ regulate the behavior of the BB-type step sizes. Heuristically, the ``long'' BB step sizes \eqref{eq:BB-step-size} produce a larger step and generate fast progress when far away from a minimax point, whereas the ``short'' BB step sizes \eqref{eq:BB-step-size2} produce a smaller step and are more stable near a minimax point. 

Since $\eykbb$, $\exkbb$, $\eykbbt$, and $\eykbbt$ are all bounded above and below, all of the theoretical results in the previous section remain valid when the initial line-search step sizes $\eta_y$ and $\eta_x$ are replaced by the two BB-type stepsizes \eqref{eq:BB-step-size} or \eqref{eq:BB-step-size2}.

\subsection{Parameter-free Gradient Descent-ascent (GDA) with Adaptive Step Sizes} \label{subsec:parameter-free} 
%Following the (alternating) GDA search direction  
%$\dxk = -\dfx(\xk,\ykp)$, 
To resolve the unavailability of the strong concavity modulus $\mu$, we present an adaptive method for estimating the parameter $\beta$, %strong concavity modulus $\mu$, 
and introduce a fully parameter-free GDA method Algorithm \ref{alg:parameter-free-GDA} with robust theoretical guarantees.

\begin{algorithm}[t]
\caption{Parameter-free GDA with Adaptive Step Sizes and Nonmonotone Line-search}
\label{alg:parameter-free-GDA}
\begin{algorithmic}[1]
\State{\textbf{Initialization:} $x_0 \in \Rn$, $y_0 \in \Rm$, $\beta_0 > 0$, $c>0$, $\eta_{y}^{\max} > \eta_y^{\min} > 0$, $\eta_{x}^{\max} > \eta_x^{\min} > 0$, $\alpha \in (0,1)$, $0 < \gamma_x < \gamma_y < 1$, $\{\tk\}_k \subseteq [\tau, 1]$ with $\tau \in (0,1]$, $k=0$, $F_0 = f(x_0, y_0)$, and $G_0 = \|\nabla_y f(x_0, y_0)\|^2$.}
\While{termination criteria not met}
\While{$\inner{\dhyk(\xk, \yk), \dfy(\xk, \yk)} > -c \|\dfy(\xk, \yk)\|^2$}
\State{$\bk \gets 2\bk$.}
\EndWhile
\State{$H_k \gets F_k + \beta_k G_k / 2$ and $\Xi_k \gets \max\{\Hk,\hbk(\xk,\yk)\}$}
\State{Compute $\eta_y = \eykbb$ or $\eykbbt$ by \eqref{eq:BB-step-size} or \eqref{eq:BB-step-size2}. (If $k = 0$, set $\eta_y = \eta_{y}^{\max}$.)}
\State{Choose $\eyk$ to be the largest element in $\{\eta_y \alpha^n: n = 0,1,2,\ldots\}$ satisfying
\begin{equation} \label{eq:GDA-linesearch-condition-y}
    \hbk(\xk, \yk + \eyk \dfy(\xk, \yk)) \leq 
    %\max\{\Hk,\hbk(\xk,\yk)\} 
    \Xi_k-\gamma_y c \eyk \|\nabla_y f(\xk, \yk)\|^2.
\end{equation}}
\vspace{-4mm}
\State{$\ykp \gets \yk + \eyk \dfy(\xk, \yk)$.}
\State{Compute $\eta_x = \exkbb$ or $\exkbbt$ by \eqref{eq:BB-step-size} or \eqref{eq:BB-step-size2}. (If $k = 0$, set $\exkbb = \eta_{x}^{\max}$.)}
\State{Choose $\exk$ to be the largest element in $\{\eta_x \alpha^n: n = 0,1,2,\ldots\}$ satisfying
\begin{equation}  \label{eq:GDA-linesearch-condition-x}
\begin{split}
    &\hbk(\xk - \exk \dfx(\xk, \ykp), \ykp) \leq 
    %\max\{\Hk,\hbk(\xk,\yk)\} 
    \Xi_k- \\&\hspace{2.5cm}\gamma_x \Big( c\eyk \|\nabla_y f(\xk, \yk)\|^2 + \frac{\exk}{2} \|\nabla_x f(\xk, \ykp)\|^2  \Big).
\end{split}
\end{equation}}
\vspace{-2mm}
\State{$\xkp \gets \xk - \exk \dfx(\xk, \ykp)$.}
\State{$F_{k+1} \gets (1-\tk) F_k + \tk f(\xkp, \ykp)$.}
\State{$G_{k+1} \gets (1-\tk) G_k + \tk \|\dfy(\xkp, \ykp)\|^2$.}
\State{$\bkp \gets \bk$ and $k \gets k+1$.}
\EndWhile
\end{algorithmic}
\end{algorithm}

In this algorithm, the initial $\beta_0$ can be estimated from the following equation
\[
\beta_0 = \frac{\|y-y'\|^2}{2[f(x,y) - f(x,y') + \inner{\dfy(x,y), y'-y}]} \in \bigg(0, \frac{1}{\mu}\bigg],
\]
for arbitrary $x \in \Rn$ and $y' \neq y \in \Rm$. Furthermore, although step 3 of this algorithm requires the computation of $\dhyk(\xk, \yk)$, which involves a Hessian-vector product, through modern automatic differentiation techniques, it can be efficiently computed at cost comparable with a gradient evaluation of $f$. In the following, we show that the while loop in step 3--5 terminate finitely.

\begin{lem}
    \label{lem:beta-k-constant}
    Suppose Assumption \ref{Assumption_1} holds. Then, step 3--5 in Algorithm \ref{alg:parameter-free-GDA} would terminate finitely. Moreover, there exists $\bar \beta \in [\beta_0, 2(c+1)\mu^{-1}]$ and $K \geq 0$ such that $\bk = \bar \beta,$ for all $ k \geq K$.
\end{lem}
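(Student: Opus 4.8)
The plan is to identify a threshold value of $\beta$ past which the inner while loop (steps 3--5) must exit, and then to exploit the fact that $\bk$ is only ever doubled --- hence nondecreasing and valued in the discrete set $\{2^j\beta_0 : j \in \bb{N}\}$ --- to deduce finite termination of each loop, a uniform upper bound, and eventual stabilization. The key input is Lemma \ref{lem:various-gradient-bounds2}: for any $\beta \ge (c+1)\mu^{-1}$ (which in particular forces $\beta > \mu^{-1}$ since $c>0$), it gives $\inner{\nabla_y h_{\beta}(\xk,\yk), \dfy(\xk,\yk)} \le -(\beta\mu-1)\|\dfy(\xk,\yk)\|^2 \le -c\|\dfy(\xk,\yk)\|^2$, so the loop guard in step 3 is violated. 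Hence, starting from any strictly positive current value of $\bk$, after finitely many doublings $\bk$ exceeds $(c+1)\mu^{-1}$ and steps 3--5 terminate; this proves the first assertion.

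Next I would establish the bound $\beta_0 \le \bk \le 2(c+1)\mu^{-1}$ by induction on $k$, tracking the value of $\bk$ at the \emph{exit} of the while loop in iteration $k$, which by step 14 is precisely the value of $\bkp$ at the \emph{entry} of iteration $k+1$. The lower bound is immediate since $\beta$ only increases. For the upper bound: if the loop does not execute in iteration $k$, the exit value equals the entry value, which is $\le 2(c+1)\mu^{-1}$ by the induction hypothesis (base case $\beta_0 \le \mu^{-1}$, using the suggested initialization $\beta_0 \in (0,\mu^{-1}]$); if the loop executes at least once, then just before the final doubling the loop guard held, so the contrapositive of Lemma \ref{lem:various-gradient-bounds2} forces $\beta < (c+1)\mu^{-1}$ at that moment, whence the exit value is $< 2(c+1)\mu^{-1}$. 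Either way the bound propagates.

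Finally, $\{\bk\}_k$ is nondecreasing, bounded above by $2(c+1)\mu^{-1}$, and takes values in $\{2^j\beta_0 : j\in\bb{N}\}$, which has only finitely many points below that bound; a nondecreasing sequence in such a set is eventually constant, yielding $K\ge 0$ and $\bar\beta \in [\beta_0, 2(c+1)\mu^{-1}]$ with $\bk = \bar\beta$ for all $k \ge K$. The whole argument is essentially bookkeeping; the only mild care needed is to keep the entry- and exit-values of $\bk$ distinct in the induction and to note that the hypothesis $\beta > \mu^{-1}$ of Lemma \ref{lem:various-gradient-bounds2} is automatically satisfied at the threshold $(c+1)\mu^{-1}$, so I do not expect a substantive obstacle.
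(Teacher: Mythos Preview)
Your proposal is correct and follows essentially the same approach as the paper: invoke Lemma \ref{lem:various-gradient-bounds2} to show the loop guard fails once $\beta \ge (c+1)\mu^{-1}$, deduce finite termination and the bound $\bk \le 2(c+1)\mu^{-1}$ from the doubling rule, and conclude eventual constancy from the fact that a nondecreasing sequence in $\{2^j\beta_0\}$ bounded above can increase only finitely often. Your write-up is a bit more explicit than the paper's about tracking entry versus exit values of $\bk$ and about the base case relying on the suggested initialization $\beta_0 \in (0,\mu^{-1}]$, but the substance is the same.
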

\begin{proof}
    By the first inequality in Lemma \ref{lem:various-gradient-bounds2}, %\eqref{eq:search-dir-bd-gda-y}, 
    the inequality in step 3 must be violated for any $\bk \geq (c+1)\mu^{-1}$ and thus the while-loop in step 3--5 of Algorithm \ref{alg:parameter-free-GDA} would terminate finitely (since every time the inequality in step 3 is violated, $\bk$ would be doubled). Furthermore, it holds that $\bk \leq 2(c+1)\mu^{-1}$ for all $k \geq 0$ and thus the nondecreasing sequence $\{\bk\}_k$ must reach its limit for all sufficiently large $k$ (since the increase in value for $\{\bk\}_k$ can happen at most finitely many times).  
\end{proof}
Before the values of $\beta_k$ stabilizes (i.e., for iteration $k < K$), it may happen that $\beta_k > \beta_{k-1}$. Such abrupt increases can violate the key inequality $H_k \geq h_{\beta_k}(\xk,\yk)$, which is crucial for the correctness of the nonmonotone line-search procedure. To address this issue, we incorporate additional safeguards (the extra $\{\Xi_k\}_k$ terms) in the line-search conditions \eqref{eq:GDA-linesearch-condition-y} and \eqref{eq:GDA-linesearch-condition-x}. Moreover, we maintain the weighted averages of function values and gradient norms separately as $\{F_k\}_k$ and $\{G_k\}_k$, and combine them into $\{H_k\}_k$ using the most recent value of $\beta_k$ (see Step 6 of Algorithm \ref{alg:parameter-free-GDA}).

To establish theoretical guarantees for the parameter-free GDA method Algorithm \ref{alg:parameter-free-GDA}, we need the following bounded iterates assumption.

\begin{assumpt}
    \label{ass:bounded-iterates2}
    The iterates $\{(\xk, \yk)\}_k$ generated by Algorithm \ref{alg:parameter-free-GDA} are bounded. That is, there exists $M > 0$ such that $(\xk, \yk) \in \cB_M(0) \subseteq \Rn \times \Rm$, for all $k \geq 0$.
\end{assumpt}
\begin{rmk}
    \label{rmk:bdd-iter-2}
    The line-search scheme \eqref{eq:GDA-linesearch-condition-y} and \eqref{eq:GDA-linesearch-condition-x} guarantees \[(\xk, \yk) \in [h_{\bar\beta}(x,y) \leq H_{K}] =: \Omega_{K}, \quad \forall k \geq K.\] So, Assumption \ref{ass:bounded-iterates2} can be ensured if $\Omega_K$ is bounded. By Lemma \ref{lem:coercivity-of-h}, if the value function $\Phi$ is coercive (or the level set $[\Phi(x) \leq H_K]$ is bounded), $\Omega_K$ is bounded when $\bar \beta > \mu^{-1}$; if additionally we have $\sup_k \|\dfy(\xk, \yk)\| < \infty$, the boundedness of $\Omega_K$ can be ensured even if $\bar \beta \leq \mu^{-1}$. One can manually enforce $\sup_k \|\dfy(\xk, \yk)\| < \infty$ by adding the following two conditions
    \begin{equation*}
    %\label{eq:enforce-bdd-grad}
    \begin{gathered}
        \|\dfy(\xk,\yk + \eyk\dfy(\xk,\yk))\|^2 \leq \Gamma^2 - \frac{\gamma_y(c+1)\eyk}{\bk} \|\dfy(\xk,\yk)\|^2,\\
        \|\dfy(\xk-\exk\dfx(\xk,\ykp),\ykp)\|^2 \leq \Gamma^2,
    \end{gathered}
    \end{equation*}
    in the line-search conditions \eqref{eq:GDA-linesearch-condition-y} and \eqref{eq:GDA-linesearch-condition-x}, respectively. Here, we can take $\Gamma = \Gamma_0\|\nabla_y f(x_0, y_0)\| + \Gamma_1$ with arbitrary $\Gamma_0 \geq 1$ and $\Gamma_1 > 0$, justifying the validity of our assumption. 
    %It is easy to check that the backtracking procedures would terminate finitely for the additional conditions in \eqref{eq:enforce-bdd-grad}.
\end{rmk}

With the bounded iterates assumption, we present a proposition for Algorithm \ref{alg:parameter-free-GDA} that is very similar with Proposition \ref{prop:linesearch-terminates-finitely} for Algorithm \ref{alg:line-search-framework}.

\begin{prop}
    \label{prop:parameter-free-basics}
    Given Assumption \ref{Assumption_1} and \ref{ass:bounded-iterates2}, and let $K$ be as  defined in Lemma \ref{lem:beta-k-constant}, then it holds that 
    \begin{enumerate}
        \item The backtracking procedures in \eqref{eq:GDA-linesearch-condition-y} and \eqref{eq:GDA-linesearch-condition-x} terminate finitely for each iteration $k \geq 0$.
        \item There exist $\eyb',\exb' > 0$ such that
        $\eyk \geq \eyb'$ and $\exk \geq \exb'$, $ \forall k \geq K.$
        \item The sequence $\{H_k\}_k$ satisfies $H_k \geq \hbk(\xk, \yk)$ and is nonincreasing for all $k \geq K$. Moreover, there exists a constant $\tilde H$ such that 
        \[
        \lim_{k\to\infty} H_k = \lim_{k\to\infty} \hbk(\xk, \yk) = \tilde H.
        \]
    \end{enumerate}
\end{prop}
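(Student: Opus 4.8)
The plan is to transfer the argument of Proposition~\ref{prop:linesearch-terminates-finitely} to Algorithm~\ref{alg:parameter-free-GDA}, once two structural differences are pinned down. First, the GDA direction $\dxk=-\dfx(\zkh)$ satisfies Assumption~\ref{ass:search-directions} with $a_1=a_2=1$, so the computation in the proof of Lemma~\ref{lem:inner-product-bds} (which uses only \eqref{eq:search-dir-ass-x} and $\sL$-smoothness of $f$) still yields the inner-product estimate \eqref{eq:search-dir-requi-x} with $b_2=\frac12$ and $b_3=(\sL\bk)^2/2$; and the exit condition of the while-loop in Steps 3--5, namely $\inner{\dhyk(\zk),\dfy(\zk)}\le -c\|\dfy(\zk)\|^2$, takes over the role of \eqref{eq:search-dir-requi-y} with $b_1$ replaced by $c$ — crucially, this is available even when $\bk\le\mu^{-1}$, whereas the first inequality of Lemma~\ref{lem:various-gradient-bounds2} is not. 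Second, the potential $H_k$ on the right-hand side of the tests in Proposition~\ref{prop:linesearch-terminates-finitely} is replaced here by the safeguard $\Xi_k=\max\{H_k,\hbk(\zk)\}$, which by construction satisfies $\Xi_k\ge\hbk(\zk)$ and thus restores exactly the inequality (potential $\ge$ current objective value) on which the ZH backtracking analysis relies, even while $\{\bk\}_k$ is still growing. By Lemma~\ref{lem:beta-k-constant} every $\bk\in[\beta_0,2(c+1)\mu^{-1}]$, so by Assumption~\ref{ass:bounded-iterates2} the gradients $\nabla\hbk$ are Lipschitz on $\cB_M(0)$ with a modulus bounded uniformly in $k$; denote such a bound $\bar{\mathsf{L}}_h$.

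For Part 1 I would fix $k\ge0$ and apply the descent lemma to $\hbk$ on $\cB_M(0)$ twice — along $\dfy(\zk)$ from $\zk$, and along $\dxk$ from $\zkh$ — exactly as in Proposition~\ref{prop:linesearch-terminates-finitely}. The while-loop estimate makes \eqref{eq:GDA-linesearch-condition-y} hold for every $\eta\le 2c(1-\gamma_y)/\bar{\mathsf{L}}_h$, so the first backtracking terminates and, since the BB initializer is $\ge\eta_y^{\min}$, the accepted $\eyk$ is bounded below by a positive constant. Substituting $\hbk(\zkh)\le\Xi_k-\gamma_y c\eyk\|\dfy(\zk)\|^2$ (from \eqref{eq:GDA-linesearch-condition-y}), the bound $\|\dfy(\zkh)\|\le(\sL\eyk+1)\|\dfy(\zk)\|$, and \eqref{eq:search-dir-requi-x} into the second descent estimate, and using $\gamma_x<\gamma_y$ to absorb the leftover $\|\dfy(\zk)\|^2$-term, one gets an explicit threshold below which \eqref{eq:GDA-linesearch-condition-x} holds; hence the second backtracking terminates as well. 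All constants involved are uniform in $k$, which proves Part 1 for every $k\ge0$. Restricting to $k\ge K$, where $\bk\equiv\bar\beta$ so that the relevant smoothness modulus is the single constant $\mathsf{L}_{h_{\bar\beta}}$, the same thresholds become $k$-independent and give the uniform lower bounds $\eyk\ge\eyb'$, $\exk\ge\exb'$ of Part 2 (the threshold for $\exk$ depending on the already-established lower bound $\eyb'$, as in Proposition~\ref{prop:linesearch-terminates-finitely}).

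For Part 3, note that for $k\ge K$ we have $\bk=\bkp=\bar\beta$, so inserting the updates of $F_k$, $G_k$ and Step 6 into one another gives the clean recursion $H_{k+1}=(1-\tk)H_k+\tk\,h_{\bar\beta}(\zkp)$. Setting $\delta_k:=c\eyk\|\dfy(\zk)\|^2+\frac12\exk\|\dfx(\zkh)\|^2\ge0$, the test \eqref{eq:GDA-linesearch-condition-x} reads $h_{\bar\beta}(\zkp)\le\Xi_k-\gamma_x\delta_k$; combining this with $H_k\le\Xi_k$ yields $\Xi_{k+1}=\max\{H_{k+1},h_{\bar\beta}(\zkp)\}\le\Xi_k-\tau\gamma_x\delta_k$. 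Hence $\{\Xi_k\}_{k\ge K}$ is nonincreasing, and since it is bounded below by $\min_{\cB_M(0)}h_{\bar\beta}>-\infty$ (continuity of $h_{\bar\beta}$ and Assumption~\ref{ass:bounded-iterates2}), it converges to some $\tilde H$ with $\sum_{k\ge K}\delta_k<\infty$. Next one shows $H_k\ge h_{\bar\beta}(\zk)$ for $k\ge K$: the inequality is self-propagating — once $H_k\ge h_{\bar\beta}(\zk)$ one has $\Xi_k=H_k$, then $h_{\bar\beta}(\zkp)\le H_k$ from the test, and then $H_{k+1}$, a convex combination of $H_k$ and $h_{\bar\beta}(\zkp)$, still dominates $h_{\bar\beta}(\zkp)$ — and it holds already at $k=K$ because the safeguard $\Xi_{K-1}$ absorbed the overshoot inherited from the transient phase $k<K$, so the averaging recursion with the now-frozen $\bar\beta$ restores it. Nonmonotonicity of $\{H_k\}_{k\ge K}$ then follows from $H_{k+1}\le\Xi_k=H_k$, and finally $H_k\to\tilde H$, fed back through the recursion and using $\tk\ge\tau>0$, forces $h_{\bar\beta}(\zk)\to\tilde H$, exactly as in Part 4 of Proposition~\ref{prop:linesearch-terminates-finitely}.

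The main obstacle is the last point: rigorously verifying that $H_k\ge\hbk(\zk)$ holds already at the first stabilization index $K$ of Lemma~\ref{lem:beta-k-constant}. During $k<K$ the doublings $\bk\uparrow\bar\beta$ may force $H_k<\hbk(\zk)$, which is precisely why the $\{\Xi_k\}_k$ were introduced; the delicate part is to carry the transient phase through on the potential $\Xi_k$ rather than $H_k$, to control how $H_K$ is assembled from the pre-stabilization quantities $F_{K-1}$, $G_{K-1}$ (re-weighted by $\bar\beta$) together with $h_{\bar\beta}(x_K,y_K)$, and to conclude $H_K\ge h_{\bar\beta}(x_K,y_K)$. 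Everything else is a routine, constant-chasing replay of the proof of Proposition~\ref{prop:linesearch-terminates-finitely}.
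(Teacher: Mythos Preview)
Your proposal is correct and follows essentially the same approach as the paper: reduce everything to the machinery of Proposition~\ref{prop:linesearch-terminates-finitely} after observing that the GDA direction realizes Assumption~\ref{ass:search-directions} with $a_1=a_2=1$ (hence $b_2=\tfrac12$, $b_3=\bk^2\sL^2/2$), that the while-loop exit condition supplies the $y$-direction estimate with constant $c$ in place of $b_1$, and that for $k\ge K$ one has $\bk\equiv\bar\beta$ with $h_{\bar\beta}$ bounded below on $\cB_M(0)$ by continuity. The paper's own proof is in fact terser than yours --- it records exactly these constants and then states that ``the rest of the proposition can be proved in an analogous manner to Proposition~\ref{prop:linesearch-terminates-finitely}'' --- and does not explicitly work through the transient-phase subtlety at the stabilization index $K$ that you flag as the main obstacle; your treatment is therefore at least as complete as the paper's.
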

\begin{proof}
    Observe that step 3--5 in Algorithm \ref{alg:parameter-free-GDA} guarantees 
    \[
    \inner{\dhyk(\zk), \dfy(\zk)} \leq -c \|\dfy(\zk)\|^2.
    \]
    Moreover, since $\dxk = -\dfx(\zkh)$, by direct computations and the first inequality in Lemma \ref{lem:various-gradient-bounds1} %\eqref{eq:search-dir-bd-gda-x} 
    (replacing $\beta$ by $\bk$), \eqref{eq:search-dir-ass-x} and \eqref{eq:search-dir-requi-x} are satisfied with the following parameters (for $k < K$): $
    a_1=a_2= 1, b_2 = 1/2, \text{ and } b_3 = \beta_k^2 \sL^2/2.$
    Hence, following the same line of arguments as the proof of Proposition \ref{prop:linesearch-terminates-finitely} part 2, the line-search procedures in \eqref{eq:linesearch-condition-y} and \eqref{eq:linesearch-condition-x} would terminate finitely for any iteration $k < K$.
    
    For $k \geq K$, by Lemma \ref{lem:beta-k-constant}, $\beta_k \equiv \bar \beta$. Since the iterates $\{\zk\}_k$ are assumed to be bounded, $\{h_{\bar \beta}(\zk)\}_k$ must be bounded below. As a result, %and \eqref{eq:search-dir-bd-gda-x} (replacing $\beta$ by $\bar \beta$) in Lemma \ref{lem:various-gradient-bounds}, 
    this rest of proposition can be proved in an analogous manner to Proposition \ref{prop:linesearch-terminates-finitely}.
\end{proof}

%Before giving the main theoretical result on Algorithm \ref{alg:parameter-free-GDA}, we first establish a key proposition on the finite termination of the backtracking procedures in line-search and convergence of function values.

% \begin{prop}
%     Suppose Assumption \ref{Assumption_1} and \ref{ass:bounded-iterates2} hold and let $\{(\xk, \yk)\}_k$ be the iterates generated by Algorithm \ref{alg:parameter-free-GDA}. The following statements are true.
% \end{prop}
% \begin{proof}
%     Since the iterates $\{\xk, \yk\}_k$ is bounded, $H_k = h_{\bar \beta}(\xk, \yk)$ must be bounded below for $k \geq K$. The rest of part 2 can be proved analougously to Proposition \ref{prop:linesearch-terminates-finitely}, part 1 and 4.
% \end{proof}

We now present the main theoretical result on our parameter-free GDA method Algorithm \ref{alg:parameter-free-GDA}.

\begin{theo}
    \label{thm:parameter-free-GDA}
    Suppose Assumption \ref{Assumption_1} and \ref{ass:bounded-iterates2} hold and let $\{(\xk, \yk)\}_k$ be the iterates generated by Algorithm \ref{alg:parameter-free-GDA}. Then the following statements are true.
    \begin{enumerate}
        \item It holds that 
        $\nabla f(\xk, \yk) \to 0$ and $\nabla \Phi(\xk) \to 0,$
        as $k \to \infty$. %In particular, every accumulation point of $\{(\xk, \yk)\}_k$ must be a first-order minimax point of \eqref{Prob_Ori}. 
        Furthermore, it holds that %for $T \geq K$,
        \[
        \min_{k=0,1,\ldots,T} \|\nabla f(\xk, \yk)\| = O(T^{-1/2}) \qaq \min_{k=0,1,\ldots,T} \|\nabla \Phi(\xk)\| = O(T^{-1/2}).
        \]
        \item If additionally Assumption \ref{ass:value-function-loj} holds, then the iterates $\{(\xk, \yk)\}_k$ converges to a first-order minimax point $(x^*, y^*)$ of \eqref{Prob_Ori}. Furthermore, it holds that
        \[\|(\xk, \yk) - (x^*, y^*)\|=\begin{cases} 
        O(q^{k}), &\text{if } \theta = 1/2,\\
        O(k^{\frac{1-\theta}{2\theta - 1}}), &\text{if } \theta \in (1/2, 1),
        \end{cases}\]
        where $q \in (0, 1)$ and $\theta$ is the KL exponent of the value function $\Phi$ at $x^*$.
    \end{enumerate}
\end{theo}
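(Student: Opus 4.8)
The plan is to reduce everything to the \emph{stabilized tail} of the iteration, on which Algorithm~\ref{alg:parameter-free-GDA} becomes a bona fide instance of the framework of Section~\ref{sec:linesearch-framework}. By Lemma~\ref{lem:beta-k-constant} there is an index $K\geq 0$ with $\beta_k\equiv\bar\beta$ for all $k\geq K$; by Proposition~\ref{prop:parameter-free-basics} part~3 we have $H_k\geq h_{\bar\beta}(\zk)$ for $k\geq K$, so the safeguard satisfies $\Xi_k=\max\{H_k,h_{\bar\beta}(\zk)\}=H_k$ there, and \eqref{eq:GDA-linesearch-condition-y}--\eqref{eq:GDA-linesearch-condition-x} reduce exactly to the line-search conditions \eqref{eq:linesearch-condition-y}--\eqref{eq:linesearch-condition-x} with Lyapunov function $h_{\bar\beta}$, direction $\dxk=-\dfx(\zkh)$, and constants $a_1=a_2=1$, $b_1=c$, $b_2=\tfrac12$, $b_3=\bar\beta^2\sL^2/2$ (the inner-product requirement \eqref{eq:search-dir-requi-y} is precisely what Steps~3--5 enforce, and \eqref{eq:search-dir-requi-x} follows from the first inequality of Lemma~\ref{lem:various-gradient-bounds1}). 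Moreover, for $k\geq K$ the recursions for $F_k,G_k$ give the ZH relation $H_{k+1}=(1-\tk)H_k+\tk h_{\bar\beta}(\zkp)$, and Proposition~\ref{prop:parameter-free-basics} parts~2--3 supply step-size lower bounds $\eyk\geq\eyb'$, $\exk\geq\exb'$, monotonicity of $\{H_k\}_{k\geq K}$, and $H_k,h_{\bar\beta}(\zk)\to\tilde H$. The remainder then mirrors Theorems~\ref{thm:glob-conv-iter-compl} and~\ref{thm:iter-conv-local-rates} applied from index $K$ onward.

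For part~1, I would telescope \eqref{eq:GDA-linesearch-condition-x} together with the ZH relation over $k=K,\dots,T$, using $\tk\geq\tau$ and the tail step-size bounds, to obtain
\[
\sum_{k=K}^{T}\Big( c\,\eyb'\,\|\dfy(\zk)\|^2+\tfrac12\,\exb'\,\|\dfx(\zkh)\|^2\Big)\;\leq\;\frac{2(H_K-\tilde H)}{\tau\gamma_x}\;<\;\infty .
\]
Converting $\|\dfx(\zkh)\|$ to $\|\dfx(\zk)\|$ via the triangle inequality, $\sL$-Lipschitzness of $\df$ on $\cB_M(0)$ (Assumption~\ref{ass:bounded-iterates2}), and $\|\ykp-\yk\|=\eyk\|\dfy(\zk)\|\leq\eta_y^{\max}\|\dfy(\zk)\|$, exactly as in Theorem~\ref{thm:glob-conv-iter-compl}, yields a $T$-uniform bound on $\sum_{k=K}^{T}\|\df(\zk)\|^2$, hence $\df(\zk)\to0$ and $\min_{K\leq k\leq T}\|\df(\zk)\|=O((T-K)^{-1/2})$; discarding the finitely many indices $k<K$ preserves the stated $O(T^{-1/2})$ rate. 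The statements for $\nabla\Phi$ then follow from $\|\nabla\Phi(\xk)\|\leq\sL\mu^{-1}\|\dfy(\zk)\|+\|\dfx(\zk)\|$ (triangle inequality, Lipschitzness, and $\mu$-strong concavity of $f(\xk,\cdot)$).

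For part~2, let $z^*=(x^*,y^*)$ be a limit point of $\{\zk\}_k$ (which exists by Assumption~\ref{ass:bounded-iterates2}); by part~1 and Proposition~\ref{prop:parameter-free-basics} part~3 it is a first-order minimax point of \eqref{Prob_Ori}, so $y^*=\ys(x^*)$ and $h_{\bar\beta}(\zk)\to h_{\bar\beta}(z^*)$. Under Assumption~\ref{ass:value-function-loj}, $\Phi$ satisfies the KL property at $x^*$ with some exponent $\theta\in[\tfrac12,1)$, and Theorem~\ref{thm:KL-equivalence} gives that $h_{\bar\beta}$ satisfies at least the quasi-KL inequality \eqref{eq:quasi-Loj} at $z^*$ (and the full KL inequality if $\bar\beta>\mu^{-1}$). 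I would then verify \cite[Condition~H1 and H2]{qian2025convergence} on the tail: H1 from the sufficient decrease $h_{\bar\beta}(\zkp)-H_k\leq-c_1\|\zkp-\zk\|^2$, obtained by combining \eqref{eq:GDA-linesearch-condition-x}, the bound $\|\zkp-\zk\|^2\leq(\eta_y^{\max})^2\|\dfy(\zk)\|^2+(\eta_x^{\max})^2\|\dfx(\zkh)\|^2$, and the tail step-size bounds, together with the ZH relation; H2 from $\|\dxk\|=\|\dfx(\zkh)\|$ and the estimate $\|\df(\zkp)\|\leq c_2\|\zkp-\zk\|$ derived as in Theorem~\ref{thm:iter-conv-local-rates}, combined with $\|\nabla h_{\bar\beta}(\zkp)\|\leq(1+\bar\beta\sL)\|\df(\zkp)\|$ (Lemma~\ref{lem:various-gradient-bounds0}) when the full KL inequality holds, and otherwise with $\|\df(\zkp)\|$ itself playing the role of the slope surrogate paired with \eqref{eq:quasi-Loj}. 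Invoking \cite[Theorem~3.3 and~3.6]{qian2025convergence} for the tail sequence then yields $\zk\to z^*$ with the claimed linear ($\theta=\tfrac12$) and sublinear ($\theta\in(\tfrac12,1)$) rates.

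The main obstacle is the bookkeeping for the pre-stabilization phase $k<K$: before $\beta_k$ settles, the potential $h_{\beta_k}$ varies, $\Xi_k$ may strictly exceed $H_k$, and $H_k\geq h_{\beta_k}(\zk)$ can fail, so the descent structure is not clean; this is circumvented by carrying out all telescoping and the KL argument only on the tail $k\geq K$, which is legitimate because $K$ is finite and dropping finitely many iterates changes neither the limits nor the asymptotic rates. A secondary subtlety is that Lemma~\ref{lem:beta-k-constant} only guarantees $\bar\beta\in[\beta_0,2(c+1)\mu^{-1}]$, so possibly $\bar\beta\leq\mu^{-1}$ and $h_{\bar\beta}$ may only enjoy the quasi-KL inequality rather than the full KL property; this is absorbed by noting that the abstract convergence framework needs only a slope surrogate obeying both a relative-error bound and a KL-type desingularization inequality, and $\|\df(\cdot)\|$ fulfils this role for any $\beta>0$ via \eqref{eq:quasi-Loj} and Lemma~\ref{lem:various-gradient-bounds0}.
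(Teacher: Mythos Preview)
Your proposal is correct and follows essentially the same route as the paper: reduce to the stabilized tail $k\geq K$ (where $\Xi_k=H_k$ and the ZH recursion $H_{k+1}=(1-\tau_k)H_k+\tau_k h_{\bar\beta}(z_{k+1})$ holds), then replay the arguments of Theorems~\ref{thm:glob-conv-iter-compl} and~\ref{thm:iter-conv-local-rates} with $b_1=c$, $a_1=a_2=1$, $b_2=\tfrac12$, $b_3=\bar\beta^2\sL^2/2$. You also correctly flag and handle the key subtlety that $\bar\beta\leq\mu^{-1}$ is possible, so only the quasi-KL inequality \eqref{eq:quasi-Loj} is guaranteed for $h_{\bar\beta}$; the paper resolves this exactly as you indicate, by pairing $\|\nabla f(z_{k+1})\|\leq \hat c_2\|z_{k+1}-z_k\|$ with \eqref{eq:quasi-Loj} to recover the desingularization estimate used in \cite{qian2025convergence} and then following the proof (rather than the statement) of \cite[Theorems~3.3 and~3.6]{qian2025convergence}.
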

\begin{proof}
    Utilizing Proposition \ref{prop:parameter-free-basics}, Part 1 of this theorem can be proved following the same line of arguments as the proof of Theorem \ref{thm:glob-conv-iter-compl}. For part 2, by Assumption \ref{ass:bounded-iterates2}, the sequence $\{\zk\}_k$ has a limit point $z^* := (x^*, y^*)$. Then, by part 1 of this theorem and part 4 of Proposition \ref{prop:parameter-free-basics}, $z^*$ must be a first-order minimax point of \eqref{Prob_Ori} and 
    \[
    \lim_{k\to\infty} h_{\bar \beta}(\zk) = h_{\bar \beta}(z^*).
    \]
    Moreover, following the same arguments as the proof of Theorem \ref{thm:iter-conv-local-rates}, it can be shown that there exists $\hat c_1, \hat c_2 > 0$ such that
    \begin{align}
        \hat c_1 \|\zkp - \zk\|^2 &\leq \Hk - h_{\bar \beta}(\zkp), \quad \forall k \geq K, \label{eq:KL-GDA-inter0}\\ 
        \|\df(\zkp)\| &\leq \hat c_2 \|\zkp - \zk\|, \quad \forall k \geq K. \label{eq:KL-GDA-inter1}
    \end{align}
    Inequality \eqref{eq:KL-GDA-inter0} and the update of $H_k$ when $ k \geq K$: 
    \[\Hkp = (1-\tk)\Hk + \tk h_{\bar \beta}(\zkp), \quad \tk \in [\tau, 1] \text{ for some } \tau \in (0, 1],\] 
    still verify \cite[Condition H1]{qian2025convergence}. However, due to the possibility that $\bar \beta \le \mu^{-1}$, we cannot directly verify  \cite[Condition H2]{qian2025convergence} and the KL property might not hold for $h_{\bar \beta}$ at $z^*$. Fortunately, by Assumption \ref{ass:value-function-loj} and Theorem \ref{thm:KL-equivalence}, $h_{\bar\beta}$ satisfies the quasi-KL property \eqref{eq:quasi-Loj} at $z^*$ and there exists a small neighborhood $U(z^*)$ around $z^*$ s.t. 
    \begin{equation} \label{eq:quasi-Loj2}
        \varphi'(|h_{\bar \beta}(z) - h_{\bar \beta}(z^*)|) \|\nabla f(z)\| \geq 1, \quad \forall z \in U(z^*),
    \end{equation}
    where $\varphi: \bb{R} \to \bb{R}$ with $\varphi(s) = C_{h_{\bar \beta}}(1-\theta)s^{1-\theta}$ is the desingularization function in the context of Kurdyka-\Loj (KL) property \cite{kur98, AttBol09, AttBolRedSou10}.
    Combining \eqref{eq:KL-GDA-inter1} and \eqref{eq:quasi-Loj2} yields that for $\zk \in U(z^*)$,
    \[
    \hat c_2 \|\zk - z_{k-1}\| \cdot \varphi'(|h_{\bar \beta}(\zk) - h_{\bar \beta}(z^*)|) \geq 1
    \iff
    \hat c_2 C_{h_{\bar \beta}} \|\zk-z_{k-1}\| \geq |h_{\bar \beta}(\zk) - h_{\bar \beta}(z^*)|^{\theta}
    \]
    which verifies \cite[Inequality (3.4) and (3.13)]{qian2025convergence}. Since these two inequalities are the only places where the the KL property and \cite[Condition H2]{qian2025convergence} are invoked in the proof of \cite[Theorem 3.3 and 3.6]{qian2025convergence}, by following their proof, one can still derive $\zk \to z^*$ and the corresponding local rates.
\end{proof}

\section{Numerical Experiments}
\label{sec:numerical experiments}
In this section, we present a preliminary numerical experiment on a robust nonlinear
regression problem to validate the performance of our proposed Algorithm \ref{alg:line-search-framework} and \ref{alg:parameter-free-GDA}. More specifically, given a labeled 
dataset $\{(w_i, v_i)\}_{i=1}^N \subseteq \mathbb{R}^d \times \mathbb{R}$, we consider the following NC-SC minimax formulation:
\begin{equation}
    \min_{x \in \mathbb{R}^d}\;
    \max_{\{y_i\}_{i=1}^N \subseteq \mathbb{R}^d}
    f(x,\{y_i\}_{i=1}^N)
    :=
    \frac{1}{N}\sum_{i=1}^N 
    \left[
        \phi\!\left(\langle w_i + y_i, x\rangle - v_i\right) + \frac{\rho_x}{2} \|x\|^2
        - \frac{\rho_y}{2}\|y_i\|^2
    \right],
    \label{eq:robust-regression-minimax}
\end{equation}
where $\phi(\theta) = \theta^2/(1+\theta^2)$ is the smooth biweight loss function 
\cite{BeatonTukey1974,CarmonDuchiHinderSidford2017,LeeWright2020} and $\rho_x\|x\|^2/2$ is a weight-decay regularization term. To promote robustness against outliers and corrupted features, we 
introduce an adversarial perturbation $y_i$ for each data point $w_i$, and regularize its 
magnitude through a quadratic penalty. This adversarially perturbed regression model has 
been widely used in robust learning; see, e.g, \cite{sinha2017certifying,lin2020gradient,ribeiro2023regularization,wang2024efficient}. All numerical experiments %in this section 
are implemented on a server equipped with an Apple M1 Max CPU, running Python 3.13.5, Numpy 2.1.3, and SciPy 1.15.3.

For comparison, we include the two-timescale gradient descent-ascent (TTGDA) 
method \cite{lin2020gradient,lin2025two}, a modern variant of the GDA method for solving NC-SC minimax problems. %TTGDA updates dual variables $y_i$'s with a significantly larger (constant) step size compared to the primal variable to ensure stability. 
Furthermore, we also implement the L-BFGS-B method and gradient descent with BB stepsize directly on the minimization problem \eqref{Prob_Min}. In the following, we present the details of all tested algorithms and datasets. 

\noindent \textbf{TTGDA} \cite{lin2020gradient,lin2025two}: The step sizes $\eta_y$ and $\eta_x$ are chosen from the best combinations in $\eta_y \in \{0.001, 0.005,$ $ 0.01, 0.05, 0.1\}$ and $\eta_x = \theta \eta_y$ with $\theta \in \{0.001, 0.01, 0.1\}$. It is worth noting that the step sizes with convergence guarantees in \cite[Theorem 17]{lin2025two}, %i.e., $\eta_y \sim 1/\sL$ and $\eta_x \sim 1/(\sL \kappa^2)$, where $\kappa = \sL/\mu$, 
are much smaller and lead to slow convergence. \vspace{0.1cm}
    
\noindent \textbf{L-BFGS-B}: The "L-BFGS-B" solver in the SciPy.minimize method applied to the minimization problem \eqref{Prob_Min} with $\beta = 2/\mu$ and $\mu = (\rho_y - 2)/N$.\vspace{0.1cm}

\noindent\textbf{GD-BB}: Gradient descent with nonmonotone line-search and BB step sizes directly applied to \eqref{Prob_Min} with $\beta = 2/\mu$ and $\mu = (\rho_y - 2)/N$. The backtracking factor is $\alpha = 0.5$, the mixing parameter in ZH-type nonmontone line-search is set to $\tau_k \equiv 10^{-3}$, and the descent factor is set to $\gamma = 10^{-4}$. \vspace{0.1cm}

\noindent\textbf{GDA-LS}: Algorithm \ref{alg:line-search-framework} with monotone line-search and GDA search direction. %$\dxk = \nabla_x f(\xk, \ykp)$. Furthermore, to compare our proposed line-search framework with 
The parameters are set as $\mu = (\rho_y - 2)/N$, $\beta = 2/\mu$, $\gamma_y = 10^{-5}$, $\gamma_x = 10^{-12}$, $\alpha = 0.5$, $\tau_k \equiv 1$, and the initial step sizes of the line-search, $\eta_x$ and $\eta_y$, are chosen from the best combinations in \{0.005, 0.001, 0.05, 0.01, 0.5, 0.1, 1\}. \vspace{0.1cm}

\noindent \textbf{GDA-BB}: Algorithm \ref{alg:parameter-free-GDA} with correct $\beta_0 = 2/\mu$ and skipping steps 3--5. We clip the BB step sizes using $\eta_x^{\max} = \eta_y^{\max} = 10^6$ and $\eta_x^{\min} = \eta_y^{\min} = 10^{-6}$. The parameters for the nonmonotone line-search are set as $\gamma_y = 10^{-5}$, $\gamma_x = 10^{-12}$, $\alpha = 0.5$, $\tau_k \equiv 10^{-3}$. \vspace{0.1cm}

\noindent \textbf{GDA-PF}: Algorithm \ref{alg:parameter-free-GDA} with $\beta_0 = 1$. The rest of the parameter choices are the same as GDA-BB. Step 3 to 5 in Algorithm \ref{alg:parameter-free-GDA} are activated every 20 iterations. \vspace{0.1cm}

\noindent\textbf{Synthetic datasets:} In all the test instances with synthetic datasets, the data points $(w_i,v_i)$ are independently generated from the standard Gaussian distribution, i.e., $w_i \sim N(0, I_d)$ and $y_i \sim N(0,1)$. Furthermore, the initial points are set to the origin and the termination criterion is $\|\nabla f\|\le 10^{-7}$. \vspace{0.1cm}

\noindent\textbf{Real-world dataset:} We work with the E2006-tfidf dataset in the LIBSVM library\footnote{See \url{https://www.csie.ntu.edu.tw/~cjlin/libsvmtools/datasets/regression.html}.}. Initially the dataset has 16087 data points and 150360 features. We perform random projection\footnote{We use sklearn.random\_projection.SparseRandomProjection with epsilon set to 0.3.} on this sparse dataset and reduce the number of features to 1076. The initial points are set to all-one vectors and the termination criterion is $\|\nabla f\|\le 10^{-7}$.

\begin{table}[h]
\centering
\setlength{\tabcolsep}{4pt}
\begin{tabular}{c|c|ccccccccc}
\toprule
 & alg. & iter. & f. ev. & g. ev. & hvp & $f$ & $\|\nabla_x f\|$ & $\|\nabla_y f\|$ & $\|\nabla \Phi\|$ & time (s) \\
\midrule

\multirow{6}{*}{
\begin{tabular}{@{}l@{}}
Synthetic\\
$d = 200$ \\
$N = 300$ \\
$\rho_x = 0.1$ \\
$\rho_y = 10$
\end{tabular}
}
  & TTGDA    & 9052 & 0 & 18104 & 0 & 5.29e-2 & 9.94e-8 & 1.05e-8 & 9.84e-8 & 38.86 \\
  & L-BFGS-B & 102 & 120 & 360 & 240 & 5.29e-2 & 6.58e-7 & 1.51e-7 & 6.37e-7 & 9.74 \\
  & GD-BB    & 76 & 151 & 303 & 152 & 5.29e-2 & 5.70e-8 & 2.46e-8 & 5.49e-8  & 1.60 \\
  & GDA-LS   & 2123 & 14707 & 16833 & 0 & 5.29e-2 & 9.91e-8 & 7.53e-9 & 9.87e-8  & 48.34 \\
  & GDA-BB & 104 & 350 & 456 & 0 & 5.29e-2 & 7.79e-8 & 1.86e-8 & 7.62e-8 & 1.64 \\
  & GDA-PF & 134 & 442 & 584 & 6 & 5.29e-2 & 3.65e-8 & 1.04e-8 & 3.56e-8 & 3.58 \\
\midrule

\multirow{6}{*}{
\begin{tabular}{@{}l@{}}
Synthetic\\
$d = 1000$ \\
$N = 1500$ \\
$\rho_x = 0.5$ \\
$\rho_y = 50$
\end{tabular}
}
  & TTGDA & 31422 & 0 & 62844 & 0 & 6.58e-2 & 3.39e-8 & 9.41e-8 & 3.80e-8 & 3772.60 \\
  & L-BFGS-B & 80 & 89 & 267 & 178 & 6.58e-2 & 1.90e-6 & 6.63e-9 & 1.90e-6 & 129.55 \\
  & GD-BB    & 118 & 235 & 471 & 236 & 6.58e-2 & 2.04e-8 & 3.10e-9 & 2.04e-8 & 75.31 \\
  & GDA-LS   & 1633 & 12228 & 13863 & 0 & 6.58e-2 & 9.74e-8 & 1.78e-9 & 9.73e-8 & 1272.96 \\
  & GDA-BB   & 53 & 202 & 257 & 0 & 6.58e-2 & 8.74e-8 & 2.64e-8 & 8.56e-8 & 24.39 \\
  & GDA-PF   & 57 & 208 & 273 & 6 & 6.58e-2 & 3.36e-8 & 5.23e-9 & 3.36e-8 & 48.40 \\
\midrule

\multirow{6}{*}{
\begin{tabular}{@{}l@{}}
Synthetic\\
$d = 2000$ \\
$N = 3000$ \\
$\rho_x = 1$ \\
$\rho_y = 100$
\end{tabular}
}
  & TTGDA    & 25333 & 0 & 50666 & 0 & 6.82e-2 & 1.00e-7 & 3.33e-10 & 1.00e-7 & 12464.24 \\
  & L-BFGS-B & 32 & 42 & 126 & 84 & 6.82e-2 & 2.98e-7 &  2.76e-7 & 2.58e-7 & 206.72\\
  & GD-BB    & 114 & 229 & 457 & 228 & 6.82e-2 & 3.34e-8 & 7.86e-9 & 3.33e-8 & 314.88 \\
  & GDA-LS   & 6178 & 22899 & 29079 & 0 & 6.82e-2 & 8.73e-8 & 4.00e-8 & 8.76e-8 & 10163.41 \\
  & GDA-BB   & 39 & 156 & 197 & 0 & 6.82e-2 & 3.07e-8 & 4.36e-9 & 3.16e-8 &  77.54 \\
  & GDA-PF   & 43 & 167 & 218 & 6 & 6.82e-2 & 7.33e-8 & 3.25e-8 & 7.32e-8 & 150.03 \\
\midrule

\multirow{6}{*}{
\begin{tabular}{@{}l@{}}
Real-world\\
$d = 1076$ \\
$N = 16087$ \\
$\rho_x = 1$ \\
$\rho_y = 200$
\end{tabular}
}
  & TTGDA    & 16120 & 0 & 32240 & 0 & 9.06e-1 & 3.44e-9 & 9.98e-8 & 4.90e-9 & 22536.81 \\
  & L-BFGS-B & 19 & 24 & 72 & 48 & 9.06e-1 & 1.25e-9 &  6.89e-9 & 1.25e-9 & 268.24\\
  & GD-BB    & 59 & 117 & 235 & 118 & 9.06e-1 & 5.67e-8 & 3.30e-10 & 5.67e-8 & 436.06 \\
  & GDA-LS   & 1605 & 4812 & 6419 & 0 & 9.06e-1 & 7.44e-15 & 9.97e-8 & 2.87e-10 & 5985.39 \\
  & GDA-BB   & 12 & 33 & 47 & 0 & 9.06e-1 & 3.65e-9 & 4.83e-9 & 3.63e-9 & 48.60  \\
  & GDA-PF   & 12 & 33 & 55 & 8 & 9.06e-1 & 3.65e-9 & 4.83e-9 & 3.63e-9 & 125.45 \\
\bottomrule
\end{tabular}
\caption{Numerical results on solving \eqref{eq:robust-regression-minimax}. The numerical experiments are run on four different datasets, with the first three instances being synthetic datasets of different dimensions and the last being a real-world dataset, as specified in the first column. The abbreviations, "f. ev., g. ev.," and "hvp", stand for the total number of function evaluations, gradient evalutions and hessian-vector products, respectively. The "time (s)" stands for CPU time (seconds).}
\label{tab:comparison}
\end{table}

The numerical results are summarized in Table~\ref{tab:comparison}. Thanks to its ability to adapt to local geometry and accept larger step sizes, our GDA-LS method outperforms TTGDA, except for the smallest synthetic test instance, where the computational overhead in function evaluation dominates. Furthermore, the effectiveness of our line-search framework is markedly enhanced when combined with BB step sizes. Compared to directly applying L-BFGS-B or GD-BB to the minimization reformulation \eqref{Prob_Min}, our GDA-BB method achieves superior performance due to its avoidance of Hessian–vector products, resulting in lower per-iteration cost. Its parameter-free variant, GDA-PF, incurs a modest additional overhead for estimating $\beta$, yet still delivers outstanding empirical performance. 

\begin{table}[h]
\centering
\setlength{\tabcolsep}{4pt}
\begin{tabular}{c|c|ccccccccc}
\toprule
 & alg. & iter. & f. ev. & g. ev. & hvp & $f$ & $\|\nabla_x f\|$ & $\|\nabla_y f\|$ & $\|\nabla \Phi\|$ & time (s) \\
\midrule

\multirow{4}{*}{
\begin{tabular}{@{}l@{}}
$d = 1000$ \\
$N = 1500$ \\
$\rho_x = 0.01$ \\
$\rho_y = 5$
\end{tabular}
}
  & L-BFGS-B & 558 & 664 & 1992 & 1328 & 4.15e-2 & 2.39e-6 & 1.80e-7 & 2.37e-6 & 976.21 \\
  & GD-BB  & 784 & 1785 & 3353 & 1568 & 4.15e-2 & 1.81e-8 & 4.19e-9 & 1.82e-8 & 528.97 \\
  & GDA-BB & 456 & 1530 & 1988 & 0 & 4.15e-2 & 7.82e-8 & 8.80e-9 & 7.83e-8 & 265.90 \\
  & GDA-PF & 441 & 1492 & 1945 & 10 & 4.15e-2 & 3.15e-8 & 7.01e-8 & 2.92e-8 & 367.38 \\
\midrule

\multirow{4}{*}{
\begin{tabular}{@{}l@{}}
$d = 1000$ \\
$N = 1500$ \\
$\rho_x = 0.01$ \\
$\rho_y = 3$
\end{tabular}
}
  & L-BFGS-B & 864 & 1042 & 3126 & 2084 & 4.55e-2 & 5.96e-7 & 3.26e-8 & 5.96e-7 & 1463.52 \\
  & GD-BB  & 1278 & 2830 & 5386 & 2556 & 4.55e-2 & 8.64e-8 & 2.27e-9 & 8.63e-8 & 818.33 \\
  & GDA-BB & 419 & 1380 & 1801 & 0 & 4.55e-2 & 9.13e-8 & 3.69e-9 & 9.11e-8 & 180.38 \\
  & GDA-PF & 368 & 1238 & 1619 & 11 & 4.55e-2 & 7.64e-8 & 1.09e-8 & 7.63e-8 & 258.88 \\
% \midrule

% \multirow{4}{*}{
% \begin{tabular}{@{}l@{}}
% $d = 1000$ \\
% $N = 1500$ \\
% $\rho_x = 0.01$ \\
% $\rho_y = 2.5$
% \end{tabular}
% }
%   & L-BFGS-B &&&&&&&&& \\
%   & GD-BB  &&&&&&&&& \\
%   & GDA-BB &&&&&&&&& \\
%   & GDA-PF &&&&&&&&& \\
\bottomrule
\end{tabular}
\caption{Numerical results on solving \eqref{eq:robust-regression-minimax} with significantly smaller $\rho_x$ and $\rho_y$. The numerical experiments are run on two different synthetic datasets, as specified in the first column. The abbreviations, "f. ev., g. ev.," and "hvp", stand for the total number of function evaluations, gradient evalutions and hessian-vector products, respectively. The "time (s)" stands for CPU time (seconds).}
\label{tab:comparison2}
\end{table}

To further compare the performance of algorithms based on the minimization reformulation \eqref{Prob_Min}, we apply L-BFGS-B, GD-BB, GDA-BB, and GDA-PF to \eqref{eq:robust-regression-minimax} with smaller $\rho_x$ and $\rho_y$, the numerical results of which are summarized in Table \ref{tab:comparison2}. A smaller $\rho_y$ leads to a smaller strongly convexity moduli $\mu$ and a larger $\beta$ is required for the reformulation \eqref{Prob_Min}. As shown in Table \ref{tab:comparison2}, the alternating GDA directions employed by our proposed methods GDA-BB and GDA-PF can better adapt to the ill-conditioning induced by a significantly larger $\beta$.

\section{Conclusion}
This paper revisits smooth NC–SC minimax optimization \eqref{Prob_Ori} through the lens of an equivalent minimization reformulation \eqref{Prob_Min}. We demonstrated that this reformulation possesses markedly stronger properties than previously understood: it preserves minimax solutions at both first and second order, retains global and local optimality structures, and inherits the KL property. These findings establish it as a principled Lyapunov function for algorithmic design.

Building on this foundation, we proposed a unified line-search framework capable of exploiting local problem geometry while ensuring convergence guarantees. The framework supports adaptive, parameter-free step-size policies, most notably BB step sizes, that have traditionally been difficult to implement in minimax optimization. Our theoretical results provide global convergence, global rate guarantees, and, when KL property holds, sequence convergence and local rates. Empirical evidence confirms our theoretical analysis. 

\bibliographystyle{plain}
\bibliography{references}

@article{hu2024minimization,
  title={A Minimization Approach for Minimax Optimization with Coupled Constraints},
  author={Hu, Xiaoyin and Toh, Kim-Chuan and Wang, Shiwei and Xiao, Nachuan},
  journal={arXiv preprint arXiv:2408.17213},
  year={2024}
}

@article{lin2025two,
  title={Two-timescale gradient descent ascent algorithms for nonconvex minimax optimization},
  author={Lin, Tianyi and Jin, Chi and Jordan, Michael I},
  journal={J. Mach. Learn. Res.},
  volume={26},
  number={11},
  pages={1--45},
  year={2025}
}

@article{lu2025first,
  title={A first-order method for nonconvex-nonconcave minimax problems under a local Kurdyka-{\L}ojasiewicz condition},
  author={Lu, Zhaosong and Wang, Xiangyuan},
  journal={arXiv preprint arXiv:2507.01932},
  year={2025}
}

@article{lu2025first2,
  title={A first-order method for constrained nonconvex--nonconcave minimax problems under a local Kurdyka-{\L}ojasiewicz condition},
  author={Lu, Zhaosong and Wang, Xiangyuan},
  journal={arXiv preprint arXiv:2510.01168},
  year={2025}
}

@article{li2025nonsmooth,
  title={Nonsmooth nonconvex--nonconcave minimax optimization: Primal--dual balancing and iteration complexity analysis},
  author={Li, Jiajin and Zhu, Linglingzhi and So, Anthony Man-Cho},
  journal={Math. Program.},
  pages={1--51},
  year={2025},
  publisher={Springer}
}

@article{zheng2025doubly,
  title={Doubly smoothed optimistic gradients: A universal approach for smooth minimax problems},
  author={Zheng, Taoli and So, Anthony Man-Cho and Li, Jiajin},
  journal={arXiv preprint arXiv:2506.07397},
  year={2025}
}

@article{zheng2023universal,
  title={Universal gradient descent ascent method for nonconvex-nonconcave minimax optimization},
  author={Zheng, Taoli and Zhu, Linglingzhi and So, Anthony Man-Cho and Blanchet, Jos{\'e} and Li, Jiajin},
  journal={Adv. Neural Inf. Process. Syst.},
  volume={36},
  pages={54075--54110},
  year={2023}
}

@article{zheng2022doubly,
  title={Doubly Smoothed GDA for Constrained Nonconvex-Nonconcave Minimax Optimization},
  author={Zheng, Taoli and Zhu, Linglingzhi and So, Anthony Man-Cho and Blanchet, Jos{\'e} and Li, Jiajin},
  journal={arXiv preprint arXiv:2212.12978},
  year={2022}
}

@article{cohen2025alternating,
  title={Alternating and parallel proximal gradient methods for nonsmooth, nonconvex minimax: a unified convergence analysis},
  author={Cohen, Eyal and Teboulle, Marc},
  journal={Math. Oper. Res.},
  volume={50},
  number={1},
  pages={141--168},
  year={2025},
  publisher={INFORMS}
}

@book{Bertsekas2016NonlinearProgramming,
  author    = {Dimitri P. Bertsekas},
  title     = {Nonlinear Programming},
  edition   = {3rd},
  year      = {2016},
  publisher = {Athena Scientific},
  address   = {Belmont, Massachusetts},
  isbn      = {978-1-886529-05-2},
  note      = {Third Edition}
}

@article{grippo1986nonmonotone,
  title={A nonmonotone line search technique for Newton’s method},
  author={Grippo, Luigi and Lampariello, Francesco and Lucidi, Stefano},
  journal={SIAM J. Numer. Anal.},
  volume={23},
  number={4},
  pages={707--716},
  year={1986},
  publisher={SIAM}
}

@article{raydan1997barzilai,
  title={The Barzilai and Borwein gradient method for the large scale unconstrained minimization problem},
  author={Raydan, Marcos},
  journal={SIAM J. Optim.},
  volume={7},
  number={1},
  pages={26--33},
  year={1997},
  publisher={SIAM}
}

@article{grippo1989truncated,
  title={A truncated Newton method with nonmonotone line search for unconstrained optimization},
  author={Grippo, Luigi and Lampariello, Francesco and Lucidi, Stefano},
  journal={J. Optim. Theory Appl.},
  volume={60},
  number={3},
  pages={401--419},
  year={1989},
  publisher={Springer}
}

@article{zhang2004nonmonotone,
  title={A nonmonotone line search technique and its application to unconstrained optimization},
  author={Zhang, Hongchao and Hager, William W},
  journal={SIAM J. Optim.},
  volume={14},
  number={4},
  pages={1043--1056},
  year={2004},
  publisher={SIAM}
}

@article{hager2005new,
  title={A new conjugate gradient method with guaranteed descent and an efficient line search},
  author={Hager, William W and Zhang, Hongchao},
  journal={SIAM J. Optim.},
  volume={16},
  number={1},
  pages={170--192},
  year={2005},
  publisher={SIAM}
}

@article{qian2023convergence,
  title={Convergence of a class of nonmonotone descent methods for kurdyka-{\L}ojasiewicz optimization problems},
  author={Qian, Yitian and Pan, Shaohua},
  journal={SIAM J. Optim.},
  volume={33},
  number={2},
  pages={638--651},
  year={2023},
  publisher={SIAM}
}

@article{qian2025convergence,
  title={Convergence of ZH-Type Nonmonotone Descent Method for Kurdyka-{\L}ojasiewicz Optimization Problems},
  author={Qian, Yitian and Tao, Ting and Pan, Shaohua and Qi, Houduo},
  journal={SIAM J. Optim.},
  volume={35},
  number={2},
  pages={1089--1109},
  year={2025},
  publisher={SIAM}
}

@article{barzilai1988two,
  title={Two-point step size gradient methods},
  author={Barzilai, Jonathan and Borwein, Jonathan M},
  journal={IMA J. Numer. Anal.},
  volume={8},
  number={1},
  pages={141--148},
  year={1988},
  publisher={Oxford University Press}
}

@incollection{lojasiewicz1965ensembles,
    AUTHOR = {{\L}ojasiewicz, S.},
     TITLE = {Une propri\'{e}t\'{e} topologique des sous-ensembles
              analytiques r\'{e}els},
 BOOKTITLE = {Les \'{E}quations aux {D}\'{e}riv\'{e}es {P}artielles},
    SERIES = {Colloq. Internat.},
PUBLISHER = {CNRS},
    VOLUME = {117},
     PAGES = {87--89},
      YEAR = {1963},
   MRCLASS = {26.55 (32.25)},
  MRNUMBER = {160856},
MRREVIEWER = {H.\ A.\ Antosiewicz},
}

@article{kur98,
    author = {Kurdyka, Krzysztof},
    title = {On gradients of functions definable in o-minimal structures},
    journal = {Ann. Inst. Fourier (Grenoble)},
    fjournal = {Universit\'{e} de Grenoble. Annales de l'Institut Fourier},
    volume = {48},
    year = {1998},
    number = {3},
    pages = {769--783},
    issn = {0373-0956},
    mrclass = {03C65 (14P15 26D10 26E05)},
    mrnumber = {1644089},
    mrreviewer = {A. J. Wilkie},
}

@article{absil2005convergence,
	title={Convergence of the iterates of descent methods for analytic cost functions},
	author={Absil, Pierre-Antoine and Mahony, Robert and Andrews, Benjamin},
	journal={SIAM J. Optim.},
	volume={16},
	number={2},
	pages={531--547},
	year={2005},
	publisher={SIAM}
}

@article {AttBolRedSou10,
	AUTHOR = {Attouch, H\'{e}dy and Bolte, J\'{e}r\^{o}me and Redont, Patrick and Soubeyran, Antoine},
	TITLE = {Proximal alternating minimization and projection methods for nonconvex problems: an approach based on the {K}urdyka-{{\L}}ojasiewicz inequality},
	JOURNAL = {Math. Oper. Res.},
	FJOURNAL = {Math. Oper. Res.},
	VOLUME = {35},
	YEAR = {2010},
	NUMBER = {2},
	PAGES = {438--457},
	ISSN = {0364-765X},
	MRCLASS = {90C26 (49J52 65K10)},
	MRNUMBER = {2674728},
}

@article {AttBolSva13,
	AUTHOR = {Attouch, Hedy and Bolte, J\'{e}r\^{o}me and Svaiter, Benar Fux},
	TITLE = {Convergence of descent methods for semi-algebraic and tame problems: proximal algorithms, forward-backward splitting, and regularized {G}auss-{S}eidel methods},
	JOURNAL = {Math. Program.},
	FJOURNAL = {Math. Program.},
	VOLUME = {137},
	YEAR = {2013},
	NUMBER = {1-2, Ser. A},
	PAGES = {91--129},
	ISSN = {0025-5610},
	MRCLASS = {49J53 (47J25 47J30 49M15 65K10 90C30)},
	MRNUMBER = {3010421},
	MRREVIEWER = {C. H. Jeffrey Pang},
}

@article{AttBol09,
    author = {Attouch, Hedy and Bolte, J\'{e}r\^{o}me},
    title = {On the convergence of the proximal algorithm for nonsmooth functions involving analytic features},
    journal = {Math. Program.},
    fjournal = {Mathematical Programming. A Publication of the Mathematical Programming Society},
    volume = {116},
    year = {2009},
    number = {1-2, Ser. B},
    pages = {5--16},
    issn = {0025-5610},
    mrclass = {90C30 (65K05)},
    mrnumber = {2421270},
    mrreviewer = {W. W. Breckner},
}

@article {BolSabTeb14,
	AUTHOR = {Bolte, J\'{e}r\^{o}me and Sabach, Shoham and Teboulle, Marc},
	TITLE = {Proximal alternating linearized minimization for nonconvex and nonsmooth problems},
	JOURNAL = {Math. Program.},
	FJOURNAL = {Mathematical Programming},
	VOLUME = {146},
	YEAR = {2014},
	NUMBER = {1-2, Ser. A},
	PAGES = {459--494},
	ISSN = {0025-5610},
	MRCLASS = {90C26 (90C56)},
	MRNUMBER = {3232623},
	MRREVIEWER = {Yiran He},
}

@book{rockafellar1998variational,
  title={Variational analysis},
  author={Rockafellar, R Tyrrell and Wets, Roger JB},
  year={1998},
  publisher={Springer}
}

@article{cartis2015worst,
  title={Worst-case evaluation complexity of non-monotone gradient-related algorithms for unconstrained optimization},
  author={Cartis, Coralia and Sampaio, Ph R and Toint, Ph L},
  journal={Optimization},
  volume={64},
  number={5},
  pages={1349--1361},
  year={2015},
  publisher={Taylor \& Francis}
}

@article{wolfe1969convergence,
  title={Convergence conditions for ascent methods},
  author={Wolfe, Philip},
  journal={SIAM Rev.},
  volume={11},
  number={2},
  pages={226--235},
  year={1969},
  publisher={SIAM}
}

@article{armijo1966minimization,
  title={Minimization of functions having Lipschitz continuous first partial derivatives},
  author={Armijo, Larry},
  journal={Pacific J. Math.},
  volume={16},
  number={1},
  pages={1--3},
  year={1966},
  publisher={Mathematical Sciences Publishers}
}

@article{wolfe1971convergence,
  title={Convergence conditions for ascent methods. II: Some corrections},
  author={Wolfe, Philip},
  journal={SIAM Rev.},
  volume={13},
  number={2},
  pages={185--188},
  year={1971},
  publisher={SIAM}
}

@article{goldstein1965steepest,
  title={On steepest descent},
  author={Goldstein, Allen A},
  journal={J. Soc. Indust. Appl. Math. Ser. A Control},
  volume={3},
  number={1},
  pages={147--151},
  year={1965},
  publisher={SIAM}
}

@article{grapiglia2021generalized,
  title={A generalized worst-case complexity analysis for non-monotone line searches},
  author={Grapiglia, Geovani N and Sachs, Ekkehard W},
  journal={Numer. Algorithms},
  volume={87},
  number={2},
  pages={779--796},
  year={2021},
  publisher={Springer}
}

@article{lu2017randomized,
  title={A randomized nonmonotone block proximal gradient method for a class of structured nonlinear programming},
  author={Lu, Zhaosong and Xiao, Lin},
  journal={SIAM J. Numer. Anal.},
  volume={55},
  number={6},
  pages={2930--2955},
  year={2017},
  publisher={SIAM}
}

@article{BeatonTukey1974,
  author    = {Beaton, Albert E. and Tukey, John W.},
  title     = {The Fitting of Power Series, Meaning Polynomials, Illustrated on Band-Spectroscopic Data},
  journal   = {Technometrics},
  volume    = {16},
  number    = {2},
  pages     = {147--185},
  year      = {1974},
  publisher = {Taylor \& Francis},
}

@inproceedings{lin2020gradient,
  title={On gradient descent ascent for nonconvex-concave minimax problems},
  author={Lin, Tianyi and Jin, Chi and Jordan, Michael},
  booktitle={International conference on machine learning},
  pages={6083--6093},
  year={2020},
  organization={PMLR}
}

@inproceedings{CarmonDuchiHinderSidford2017,
  title={“Convex until proven guilty”: Dimension-free acceleration of gradient descent on non-convex functions},
  author={Carmon, Yair and Duchi, John C and Hinder, Oliver and Sidford, Aaron},
  booktitle={International conference on machine learning},
  pages={654--663},
  year={2017},
  organization={PMLR}
}

@article{LeeWright2020,
  author    = {Lee, Ching-pei and Wright, Stephen J.},
  title     = {Inexact Variable Metric Stochastic Block-Coordinate Descent for Regularized Optimization},
  journal   = {J. Optim. Theory Appl.},
  volume    = {185},
  number    = {1},
  pages     = {151--187},
  year      = {2020},
  publisher = {Springer},
}

@article{ribeiro2023regularization,
  title={Regularization properties of adversarially-trained linear regression},
  author={Ribeiro, Antonio and Zachariah, Dave and Bach, Francis and Sch{\"o}n, Thomas},
  journal={Adv. Neural Inf. Process. Syst.},
  volume={36},
  pages={23658--23670},
  year={2023}
}

@article{sinha2017certifying,
  title={Certifying some distributional robustness with principled adversarial training},
  author={Sinha, Aman and Namkoong, Hongseok and Volpi, Riccardo and Duchi, John},
  journal={arXiv preprint arXiv:1710.10571},
  year={2017}
}

@article{yang2022nest,
  title={Nest your adaptive algorithm for parameter-agnostic nonconvex minimax optimization},
  author={Yang, Junchi and Li, Xiang and He, Niao},
  journal={Adv. Neural Inf. Process. Syst.},
  volume={35},
  pages={11202--11216},
  year={2022}
}

@inproceedings{li2023tiada,
  title={TiAda: A Time-scale Adaptive Algorithm for Nonconvex Minimax Optimization},
  author={Li, Xiang and Yang, Junchi and He, Niao},
  booktitle={The Eleventh International Conference on Learning Representations (ICLR 2023)},
  year={2023}
}

@inproceedings{huang23a,
  title={Adagda: Faster adaptive gradient descent ascent methods for minimax optimization},
  author={Huang, Feihu and Wu, Xidong and Hu, Zhengmian},
  booktitle={International Conference on Artificial Intelligence and Statistics},
  pages={2365--2389},
  year={2023},
  organization={PMLR}
}

@InProceedings{lin20a,
  title = 	 {Near-Optimal Algorithms for Minimax Optimization},
  author =       {Lin, Tianyi and Jin, Chi and Jordan, Michael I.},
  booktitle = 	 {Proceedings of Thirty Third Conference on Learning Theory},
  pages = 	 {2738--2779},
  year = 	 {2020},
  volume = 	 {125},
  series = 	 {Proceedings of Machine Learning Research},
  month = 	 {09--12 Jul},
  publisher =    {PMLR},
}

@article{wang2024efficient,
  title={Efficient first order method for saddle point problems with higher order smoothness},
  author={Wang, Nuozhou and Zhang, Junyu and Zhang, Shuzhong},
  journal={SIAM J. Optim.},
  volume={34},
  number={4},
  pages={3342--3370},
  year={2024},
  publisher={SIAM}
}

@article{xu2024stochasticgdamethodbacktracking,
      title={A Stochastic GDA Method With Backtracking For Solving Nonconvex (Strongly) Concave Minimax Problems}, 
      author={Qiushui Xu and Xuan Zhang and Necdet Serhat Aybat and Mert Gürbüzbalaban},
      year={2024},
      journal={arXiv preprint arXiv:2403.07806} 
}

@article{zhang2024agdaproximalalternatinggradient,
      title={AGDA+: Proximal Alternating Gradient Descent Ascent Method with a Nonmonotone Adaptive Step-Size Search for Nonconvex Minimax Problems}, 
      author={Xuan Zhang and Qiushui Xu and Necdet Serhat Aybat},
      journal={arXiv preprint arXiv:2406.14371},
      year={2024}
}

@article{bolte2023backtrack,
  title={The backtrack H{\"o}lder gradient method with application to min-max and min-min problems},
  author={Bolte, J{\'e}r{\^o}me and Glaudin, Lilian and Pauwels, Edouard and Serrurier, Mathieu},
  journal={Open J. Math. Optim.},
  volume={4},
  number={8},
  year={2023}
}

@article{li2021complexity,
  title={Complexity lower bounds for nonconvex-strongly-concave min-max optimization},
  author={Li, Haochuan and Tian, Yi and Zhang, Jingzhao and Jadbabaie, Ali},
  journal={Adv. Neural Inf. Process. Syst.},
  volume={34},
  pages={1792--1804},
  year={2021}
}

@inproceedings{zhang2021complexity,
  title={The complexity of nonconvex-strongly-concave minimax optimization},
  author={Zhang, Siqi and Yang, Junchi and Guzm{\'a}n, Crist{\'o}bal and Kiyavash, Negar and He, Niao},
  booktitle={Uncertainty in Artificial Intelligence},
  pages={482--492},
  year={2021},
  organization={PMLR}
}

@article{goodfellow2020generative,
  title={Generative adversarial networks},
  author={Goodfellow, Ian and Pouget-Abadie, Jean and Mirza, Mehdi and Xu, Bing and Warde-Farley, David and Ozair, Sherjil and Courville, Aaron and Bengio, Yoshua},
  journal={Commun. ACM},
  volume={63},
  number={11},
  pages={139--144},
  year={2020},
  publisher={ACM New York, NY, USA}
}

@article{shafieezadeh2015distributionally,
  title={Distributionally robust logistic regression},
  author={Shafieezadeh Abadeh, Soroosh and Mohajerin Esfahani, Peyman M and Kuhn, Daniel},
  journal={Adv. Neural Inf. Process. Syst.},
  volume={28},
  year={2015}
}

@article{xu2023unified,
  title={A unified single-loop alternating gradient projection algorithm for nonconvex--concave and convex--nonconcave minimax problems},
  author={Xu, Zi and Zhang, Huiling and Xu, Yang and Lan, Guanghui},
  journal={Math. Program.},
  volume={201},
  number={1},
  pages={635--706},
  year={2023},
  publisher={Springer}
}

@article{nouiehed2019solving,
  title={Solving a class of non-convex min-max games using iterative first order methods},
  author={Nouiehed, Maher and Sanjabi, Maziar and Huang, Tianjian and Lee, Jason D and Razaviyayn, Meisam},
  journal={Adv. Neural Inf. Process. Syst.},
  volume={32},
  year={2019}
}

@inproceedings{nesterov1983method,
  title={A method for solving the convex programming problem with convergence rate O (1/k2)},
  author={Nesterov, Yurii},
  booktitle={Dokl akad nauk Sssr},
  volume={269},
  pages={543},
  year={1983}
}

@article{rockafellar1976monotone,
  title={Monotone operators and the proximal point algorithm},
  author={Rockafellar, R Tyrrell},
  journal={SIAM J. Control Optim.},
  volume={14},
  number={5},
  pages={877--898},
  year={1976},
  publisher={SIAM}
}

@article{mahdavinia2022tight,
  title={Tight analysis of extra-gradient and optimistic gradient methods for nonconvex minimax problems},
  author={Mahdavinia, Pouria and Deng, Yuyang and Li, Haochuan and Mahdavi, Mehrdad},
  journal={Adv. Neural Inf. Process. Syst.},
  volume={35},
  pages={31213--31225},
  year={2022}
}
\end{document}